\def\NAT@spacechar{~}
\numberwithin{equation}{section}
\newtheorem{theorem}{Theorem}[section]
\newtheorem{lemma}[theorem]{Lemma}
\newtheorem{definition}[theorem]{Definition}
\newtheorem{fact}[theorem]{Fact}
\newtheorem{proposition}[theorem]{Proposition}
\newtheorem{corollary}[theorem]{Corollary}
\newtheorem{claim}[theorem]{Claim}
\newtheorem{observation}[theorem]{Observation}
\newcommand{\cCrsn}{\cC_{r,s,n}}
\newcommand{\cCrtn}{\cC_{r,2,n}}
\newcommand{\Krsn}{K_{r,s,n}}
\newcommand{\Krtn}{K_{r,2,n}}
\newcommand{\EE}{\mathbb{E}}
\newcommand{\NN}{\mathbb{N}}
\newcommand{\PP}{\mathbb{P}}
\newcommand{\ZZ}{\mathbb{Z}}
\newcommand{\Gnp}{G(n,p)}
\newcommand{\Gnm}{G(n,m)}
\newcommand{\Ctwo}{C^{e}_{4,n}}
\newcommand{\cC}{\mathcal{C}}
\newcommand{\cF}{\mathcal{F}}
\newcommand{\eps}{\varepsilon}
\newcommand{\cH}{\mathcal{H}}
\newcommand{\var}{\mathrm{Var}}
\newcommand{\COMMENT}[1]{}
\title{Spanning \texorpdfstring{$F$}{F}-cycles in random graphs}
\author{Alberto Espuny Díaz}
\author{Yury Person}
\address{Institut für Mathematik, Technische Universität Ilmenau, 98684 Ilmenau, Germany}
\email{alberto.espuny-diaz\,|\,yury.person@tu-ilmenau.de}
\thanks{Research is supported by the Carl Zeiss Foundation and by DFG grant PE 2299/3-1.}
\begin{document}

\date{\today}

\begin{abstract}
We extend a recent argument of Kahn, Narayanan and Park (\emph{Proceedings of the AMS}, to appear) about the threshold for the appearance of the square of a Hamilton cycle to other spanning structures.
In particular, for any spanning graph, we give a sufficient condition under which we may determine its threshold.
As an application, we find the threshold for a set of cyclically ordered copies of $C_4$ that span the entire vertex set, so that any two consecutive copies overlap in exactly one edge and all overlapping edges are disjoint.
This answers a question of Frieze.
We also determine the threshold for edge-overlapping spanning $K_r$-cycles.
\end{abstract}
\maketitle

\section{Introduction}\label{sec:intro}

The study of threshold functions for the appearance of spanning structures plays an important role in the theory of random graphs.
Unlike in the case of small subgraphs, which was resolved by \citet{ErdRen60} (for balanced graphs) and by \citet{Bol81} (for general graphs), in the case of general spanning structures only sufficient conditions are known.
These lead to upper bounds for the threshold of a general spanning graph, although the expectation threshold conjecture of \citet{KK07}, if true, predicts the threshold for \emph{any} graph up to a logarithmic factor.
 
Apart from particular structures where the thresholds are known, such as perfect matchings~\cite{ErdRen66}, $F$-factors~\cite{JohKahVu08}, Hamilton cycles~\cite{Kor77,Pos76} or spanning trees~\cite{Montgomery19} (to name a few), the most general result providing upper bounds was, until recently, due to \citet{Ri00}, giving in some cases asymptotically optimal upper bounds (lattices, hypercubes, $k$-th powers of Hamilton cycles for $k\ge 3$~\cite{KO12}).  An excellent survey by \citet{Boe17} provides references to many other results, in particular algorithmic ones.
  
The recent breakthrough work by \citet{FKNP19} established the fractional expectation threshold conjecture of \citet{Tal10}, providing in many cases optimal thresholds or being off by at most a logarithmic factor.
The subsequent work by \citet{KNP20} exploited the proof approach in~\cite{FKNP19} in a more efficient way, allowing to erase the logarithmic factor in the case of the square of a Hamilton cycle, and thus proving the threshold for its appearance to be $n^{-1/2}$. 

In a recent paper, \citet{Frieze20} studied thresholds for the containment of spanning $K_r$-cycles, i.e., cyclically ordered edge-disjoint copies of $K_r$ with two consecutive copies sharing a vertex.
He proved the optimal threshold of the form $n^{-2/r}\log^{1/\binom{r}{2}}n$ by reducing this problem to another result of Riordan about coupling the random graph with the random $r$-uniform hypergraph~\cite{Ri18} (see also the work of \citet{Hec18} for the triangle case).
Frieze also raised  the question about the threshold for the containment of a spanning $C_4$-cycle, where the copies of $C_4$ are ordered cyclically and two consecutive cycles overlap in exactly one edge, whereby each cycle $C_4$ overlaps with two copies of $C_4$ in opposite edges (there are some possible variations, but this would be a canonically defined structure).
Such $C_4$-cycles are referred to in~\cite{Frieze20} as a $C_4$-cycle with overlap $2$, where it is also observed that the threshold for its appearance is at most $ n^{-2/3} \log n$, which follows from~\cite{FKNP19}. 
 
The purpose of this paper is to contribute to the large body of work on thresholds for spanning structures by  establishing thresholds for spanning $2$-overlapping $C_4$-cycles (which we denote by $\Ctwo$), thus answering the question of Frieze~\cite{Frieze20}, and also for $2$-overlapping $K_r$-cycles (defined below) for $r\geq4$.
Both structures cannot be handled directly by the results in~\cite{FKNP19, Ri00}. 
In order to obtain these results, we generalise the approach of \citet{KNP20}.
As the results, we establish the following thresholds.

The first theorem answers the question of Frieze~\cite{Frieze20}. 
\begin{theorem}\label{thm:Ctwo-threshold}
The threshold for the appearance  of $\Ctwo$ in $G(2n,p)$ is $\Theta(n^{-2/3})$.
\end{theorem}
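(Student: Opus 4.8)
The plan is to prove matching lower and upper bounds of order $n^{-2/3}$. For the lower bound --- that $G(2n,p)$ whp contains no $\Ctwo$ when $p\le cn^{-2/3}$ for a small constant $c$ --- a first moment on the whole structure is enough. The graph $\Ctwo$ has $2n$ vertices and exactly $3n$ edges: its $n$ overlap edges are pairwise vertex-disjoint and each is shared by two consecutive copies of $C_4$, while each copy additionally contributes two ``crossing'' edges, so $e(\Ctwo)=n+2n=3n$. Since the number of copies of $\Ctwo$ in $K_{2n}$ is at most $(2n)!$, Stirling's formula gives $\EE[\#\,\Ctwo\subseteq G(2n,p)]\le(2n)!\,p^{3n}=(4n^{2}p^{3}/e^{2})^{n+o(n)}$, which tends to $0$ as soon as $p\le cn^{-2/3}$ with $c<(e^{2}/4)^{1/3}$; Markov's inequality then finishes this direction. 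One should also record that $n^{-2/3}$ is genuinely the right order and is not forced upward by a local obstruction: a path of $j$ consecutive copies of $C_4$ with its two end overlap edges has $2j+2$ vertices and $3j+1$ edges, of density tending to $3/2$, so it appears already at $n^{-2/3+o(1)}$; and $C_4$-factors, or factors of short $C_4$-paths, have thresholds strictly below $n^{-2/3}$ by Johansson--Kahn--Vu. So nothing beats the global count.

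The substance is the upper bound: for a large constant $C$ and $p\ge Cn^{-2/3}$, whp $G(2n,p)\supseteq\Ctwo$. Here I would follow and adapt the efficient spread argument of Kahn--Narayanan--Park. Write $\cH\subseteq2^{E(K_{2n})}$ for the family of edge sets of copies of $\Ctwo$. The first ingredient is the spreadness estimate: $\cH$ is $q$-spread with $q=O(n^{-2/3})$, i.e.\ $\#\{H\in\cH:S\subseteq H\}\le q^{|S|}|\cH|$ for all $S\subseteq E(K_{2n})$. One may assume $S$ extends to some copy of $\Ctwo$ (otherwise the count is $0$) and reduce to the case that $S$ is a disjoint union of $C_4$-sub-paths; the number of copies of $\Ctwo$ extending such a partial configuration is controlled by how many vertices remain unplaced, and since the ratio (edges present)/(vertices covered) tends to $3/2$ along a long sub-path, one gets precisely the exponent $2/3$. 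Fed into the Frankston--Kahn--Narayanan--Park theorem, this already yields $p_{c}(\Ctwo)=O(n^{-2/3}\log n)$, the bound observed by Frieze.

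Removing the logarithm is where I would argue in stages. Split $G(2n,p)$ into independent binomial random graphs $G_{1},G_{2},\dots$ with parameters summing to $p$. In $G_{1}$ first locate a large collection $\mathcal{P}$ of vertex-disjoint $C_4$-paths of a common length $L=L(n)$ (chosen to grow suitably), covering all but a small constant fraction of the vertices --- essentially a $C_4$-path-factor, whose threshold is $O(n^{-2/3})$, hence found at a constant multiple of $n^{-2/3}$. It then remains to assemble $\mathcal{P}$ into a copy of $\Ctwo$ by inserting, between consecutive members in a cyclic order, a short $C_4$-path (a ``bridge'') joining a free overlap edge of one member to one of the next and running through the leftover vertices; this is where the remaining $G_{i}$, and crucially the constant factor by which $C$ exceeds the first-moment value, enter. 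The reason no logarithm is needed is that, with a little room to spare, there is an enormous number of admissible bridges between any fixed pair of free edges, so that all bridges can be realised disjointly with failure probability $o(1)$ without a union bound over $\Theta(\log n)$ rounds; carrying this out so that the density spent on bridges stays $O(n^{-2/3})$ is exactly the content of the general sufficient condition we establish, instantiated at $\Ctwo$ with $C_4$-paths as the building blocks.

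The main obstacle, as I see it, is precisely this calibration: choosing $L$, the leftover fraction, and the bridge lengths so that (i) the block-factor is still present at density $O(n^{-2/3})$, (ii) each individual bridge is found with only a constant-factor excess in density, and (iii) the many bridges can be realised pairwise disjointly inside a leftover region that they nearly fill, with total failure probability $o(1)$ --- and all of this without paying a logarithm anywhere, which is exactly where a naive application of Frankston--Kahn--Narayanan--Park loses it. What makes $\Ctwo$ amenable is its self-similarity --- a $C_4$-path built from $C_4$-paths is again of the same shape, and two of them are spliced by a short $C_4$-path --- which is the structural feature our framework isolates; so in the end the proof reduces to verifying that framework's hypotheses for $\Ctwo$ and checking that its first-moment exponent equals $2/3$.
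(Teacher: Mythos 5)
Your lower-bound first-moment computation is fine, and your spreadness estimate $q = O(n^{-2/3})$ agrees with what the paper proves. But for the upper bound you take a genuinely different route from the paper, and that route has gaps. The paper's essential idea is a \emph{strengthening} of spreadness they call $(q,\alpha,\delta)$-superspreadness: besides $|\cC\cap\langle I\rangle|\le q^{|I|}|\cC|$, one requires $|\cC\cap\langle I\rangle|\le q^{|I|}\,k^{-\alpha c_I}|\cC|$ for $|I|\le\delta k$, where $c_I$ is the number of components of $I$ viewed as a subgraph of $K_n$ and $k=e(\Ctwo)$. For $\Ctwo$ this holds with $\alpha=1/3$, because each nontrivial component of a subgraph of $\Ctwo$ carries one extra vertex beyond what the raw edge density $2/3$ accounts for (the paper's Lemmas~3.1--3.2 give $|V(I)|-c \ge \tfrac23\ell + \tfrac{c}{3}$ for small $I$). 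That per-component bonus is exactly what lets the iterated-exposure (``fragmentation'') argument of Kahn--Narayanan--Park terminate after $\lceil 1/\alpha\rceil - 1$ rounds instead of $\Theta(\log n)$ rounds: each sprinkle shrinks the uncovered fragments by a \emph{polynomial} factor $k_0^{-\alpha}$, and after a bounded number of rounds a second-moment calculation closes the argument. You never arrive at this notion; you only establish ordinary spreadness, which, as you correctly note, gives only $O(n^{-2/3}\log n)$ via Frankston--Kahn--Narayanan--Park.

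Your proposed replacement --- build an almost-$C_4$-path-factor in a first sprinkle and then splice the paths into a cycle via short bridges in later sprinkles --- is closer in spirit to Riordan-type embeddings or absorption arguments, and it is not what the paper does. The step you flag as ``the main obstacle'' is the real problem: absorbing the leftover vertices \emph{exactly} while closing up the cycle requires dedicated machinery (absorbers/reservoirs, or a coupling with a hypergraph matching as Frieze used for the $s=1$ case), and you do not supply it. It is not automatic that bridges through a leftover set of arbitrary shape can be realised disjointly at only a constant-factor density excess without a union bound costing a log. You also lean on ``the general sufficient condition we establish, instantiated at $\Ctwo$ with $C_4$-paths as the building blocks,'' but the framework you sketch is not established; it is circular as written. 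Finally, the claim that an almost-$C_4$-path-factor with growing path length $L(n)$ appears at a constant multiple of $n^{-2/3}$ is asserted rather than derived, and it is not an immediate consequence of Johansson--Kahn--Vu (which is about factors of fixed graphs and carries polylog corrections). So: correct first moment and ordinary-spread calculation, but the log-removal mechanism as sketched has genuine gaps, and the device that actually does the work in the paper --- superspreadness with the $k^{-\alpha c_I}$ bonus feeding a constantly-many-round fragmentation process --- is absent from your proposal.
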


Our second result generalises the recent work of \citet{KNP20} on the threshold for the square of a Hamilton cycle.
The square of a Hamilton cycle can be seen as the particular case $r=3$ of a structure which we call $2$-overlapping (or edge-overlapping) spanning $K_r$-cycle and denote by $K_{r,2,n}$, for $r\geq3$.
This consists of a set of cyclically ordered copies of $K_r$, where consecutive cliques share exactly one edge and, if $r\geq4$, all other cliques are pairwise vertex-disjoint\COMMENT{In the case $r=3$, this is impossible, and we enforce that each clique shares exactly one vertex with the consecutive of its consecutive; this precisely defines the square of a Hamilton cycle.}.

\begin{theorem}\label{thm:Kr-cycle}
Let $r\ge 3$ and $n\in \NN$ with $(r-2)\mid n$.
Then, the threshold for the appearance of $K_{r,2,n}$ in $\Gnp$ is $\Theta(n^{-2/(r+1)})$.
\end{theorem}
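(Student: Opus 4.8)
\emph{Proof plan.} The plan is to prove two matching bounds; as usual only the upper bound is substantial. Throughout write $H=K_{r,2,n}$, and recall from the description of the structure that, since $(r-2)\mid n$, $H$ consists of $m:=n/(r-2)$ cliques arranged cyclically, has $n$ vertices, and has
\[
e(H)=m\left(\binom{r}{2}-1\right)=\frac{n}{r-2}\cdot\frac{(r-2)(r+1)}{2}=\frac{(r+1)n}{2}
\]
edges, so that $v(H)/e(H)=2/(r+1)$; this ratio is the source of the exponent in the statement. Since the case $r=3$ is precisely the square of a Hamilton cycle and hence the theorem of \citet{KNP20}, and the argument below specialises to theirs, I would concentrate on $r\ge4$.

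For the lower bound I would use only the first moment. As $H$ spans $[n]$, the number of copies of $H$ in $K_n$ equals $n!/|\mathrm{Aut}(H)|\le n!$, so
\[
\EE\bigl[\#\{\text{copies of }H\text{ in }\Gnp\}\bigr]\le n!\,p^{(r+1)n/2}\le\bigl(n\,p^{(r+1)/2}\bigr)^{n}.
\]
If $p=o(n^{-2/(r+1)})$ then $n\,p^{(r+1)/2}\to0$, the expectation tends to $0$, and Markov's inequality shows that whp $\Gnp$ contains no copy of $H$; hence the threshold is $\Omega(n^{-2/(r+1)})$.

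The heart of the upper bound is a density lemma stating that $H$ is nowhere denser than it is globally: for every subgraph $T\subseteq H$ with at least one edge, writing $v(T)$ for the number of non-isolated vertices of $T$, one has $e(T)\le\frac{r+1}{2}v(T)-r$, with equality exactly when $T$ is a union of consecutively overlapping cliques of $H$ (for $r=3$ the same statement holds with ``squared paths'' in place of such blocks, i.e.\ $e(T)\le 2v(T)-3$). I would prove this by induction on the number of cliques of $H$ met by a connected $T$: the base case is $\binom{v}{2}\le\frac{r+1}{2}v-r$ for $2\le v\le r$, equivalently $(v-2)(v-r)\le0$, and the inductive step peels off the at most $r-2$ private vertices of the last clique met, whose incident $T$-edges number at most $\binom{w}{2}+2w=\frac{w(w+3)}{2}\le\frac{r+1}{2}w$ when there are $w\le r-2$ of them; disconnected $T$ are handled by summing over components, which only strengthens the bound.

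For the upper bound proper I would take $\mathcal{H}\subseteq 2^{E(K_n)}$ to be the family of edge-sets of copies of $H$, each of size $\ell:=(r+1)n/2$, with the uniform measure $\mu$, and show $\mu$ is $q$-spread for $q=C_r n^{-2/(r+1)}$. A copy containing a fixed embeddable $T$ is obtained by choosing how the at most $v(T)$ cliques met by the components of $T$ are placed in the clique-cycle ($O_r(1)^{v(T)}$ internal choices and at most $m$ cyclic positions per component of $T$) and then arranging the remaining $n-v(T)$ vertices; comparing with the $n!/|\mathrm{Aut}(H)|$ copies in total, and absorbing the lower-order contributions from components landing in overlapping windows, gives $\mu(\{K\in\mathcal{H}:K\supseteq T\})\le m^{c(T)}O_r(1)^{v(T)}n^{-v(T)}$, where $c(T)$ is the number of components of $T$. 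Applying the density lemma componentwise yields $v(T)-\frac{2}{r+1}e(T)\ge\frac{2r}{r+1}c(T)$, and since additionally $v(T)\le 2e(T)$ this upper bound is at most $q^{e(T)}$ once $C_r$ is chosen large enough. A direct appeal to \citet{FKNP19} would then already place a copy of $H$ in $\Gnp$ whp for $p\ge Cn^{-2/(r+1)}\log n$; to remove the logarithmic factor one runs instead the more efficient iteration of \citet{KNP20}, which, roughly, needs somewhat more than $q$-spreadness, namely that the family of still-missing edges remains $\Theta_r(n^{-2/(r+1)})$-spread after conditioning on any partially built copy --- a robustness that, once more, follows from the density lemma. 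The main obstacle I foresee is precisely this last point: carrying the argument of \citet{KNP20} through for general $r$ with the spread parameter controlled uniformly in $r$, and in particular checking that the spreadness estimate above survives the conditioning steps; by comparison the density lemma, though essential, is routine.
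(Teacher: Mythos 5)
Your outline matches the paper's route almost exactly — first moment for the lower bound, a density lemma bounding $e(T)$ against $v(T)$ with a per-component deficit, a spreadness estimate with an extra factor decaying in the number of components $c(T)$, and then a KNP-style sprinkling to shave the logarithm. However, there are two issues, one minor and one central.

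The minor one: your density lemma as stated, $e(T)\le\frac{r+1}{2}v(T)-r$ for every subgraph $T$, is false. Try $T=H$ itself: $e(H)=\frac{r+1}{2}n>\frac{r+1}{2}n-r$. Your induction on the number of cliques met silently assumes the cliques form a path, not a cycle; once the last clique wraps around to meet the first, you add edges without adding vertices, and the inequality (and hence the derived $v(T)-\frac{2}{r+1}e(T)\ge\frac{2r}{r+1}c(T)$) breaks. The paper handles this by proving the strong component bound only for segments of length at most $n/(2r)+1$ (Corollary~\ref{coro:Krdensity_bound}, Lemma~\ref{lemma:small-spread-new}), proving a weaker bound with no additive constant for the few large components (Lemma~\ref{lemma:large-spread-new}), and encoding this split into the definition of superspreadness, where the extra $k_0^{-\alpha c_I}$ factor is only demanded for $|I|\le\delta k_0$. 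Your argument needs the same patch.

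The central gap is the step you flag yourself: ``runs instead the more efficient iteration of \citet{KNP20}.'' For $r=3$ the spread parameter $q=\Theta(n^{-1/2})$ allows KNP to finish in a single sprinkling round plus a second-moment step, because after one round of exposure the uncovered fragments have size $O(k_0^{1/2})$, small enough for Chebyshev. For $r\ge 4$ you have $q=\Theta(n^{-2/(r+1)})$ with $2/(r+1)<1/2$, so one round only shrinks fragments to size $\approx k_0^{1-1/(r+1)}$, which is still polynomial in $n$ and far too large for a second-moment argument. You need $\lceil(r+1)/2\rceil-1$ rounds of geometric shrinkage by a factor $k_0^{-1/(r+1)}$ each, followed by a second-moment finish — and you need a guarantee that the hypergraph of fragments stays large and well-spread across all of these rounds. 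That is precisely the content of the paper's fragmentation lemma (Lemma~\ref{lem:fragmentation}) and of the superspread definition that feeds it, neither of which is in \citet{KNP20}; they are the paper's genuine new contribution. Your phrase ``remains $\Theta_r(n^{-2/(r+1)})$-spread after conditioning on any partially built copy'' is a reasonable heuristic, but it is not a theorem you can cite, and in fact the paper's mechanism is different: the superspread property of the original $\mathcal{F}$ is inherited by the fragment hypergraphs via~\eqref{equa:fragmentationProperty}, rather than being re-derived after conditioning. So the proposal correctly assembles the combinatorial inputs but leaves the load-bearing probabilistic machinery unproved.
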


To prove \cref{thm:Ctwo-threshold,thm:Kr-cycle}, we state and prove a general lemma (the fragmentation lemma, \cref{lem:fragmentation}), which has potential to handle more spanning structures.
This lemma is a generalisation of the work of Kahn, Narayanan and Park on the square of a Hamilton cycle~\cite[Lemma~3.1]{KNP20} to handle structures for which constantly many rounds of exposure may be necessary, in contrast to~\cite{KNP20}, where only two rounds are used, and to~\cite{FKNP19}, where logarithmically many rounds are necessary.

The organisation of the paper is as follows.
In the next section, \cref{sec:fragmentation}, we provide the main definitions, state a general lemma (the fragmentation lemma, \cref{lem:fragmentation}), and use it to establish a general theorem (\cref{thm:main}) about thresholds for certain spanning graphs.
\Cref{thm:main} is actually the main general result of the paper, and \cref{thm:Ctwo-threshold,thm:Kr-cycle} are two of its applications.
We prove these two applications in \cref{sec:special_cases}. 
Finally, in \cref{sec:conclude} we collect a few remarks, and in the Appendix we provide the proof of \cref{lem:fragmentation}.

\section{A general theorem for thresholds}\label{sec:fragmentation}

Given any real numbers $a$ and $b$, we write $[a,b]$ to refer to the set $\{n\in\mathbb{Z}:a\leq n\leq b\}$.
For an integer $n$, we often abbreviate $[n]\coloneqq[1,n]$.
We use standard $O$ notation for asymptotic statements.

A hypergraph $\cH$ on the vertex set $V\coloneqq V(\cH)$ is a subset of the power set $2^V$.
The elements of $\cH$ are referred to as edges.
The hypergraph $\cH$ is said to be $r$-bounded if all its edges have cardinality at most $r$, and $r$-uniform if all the edges have exactly $r$ vertices.
Oftentimes, we will consider multihypergraphs $\cH$ on $V$, where we view $\cH$ as a multiset with elements from $2^V$. 
To ease readability, we will often refer to multihypergraphs as hypergraphs.
We also omit floor and ceiling signs whenever they do not affect our asymptotic computations.

Following \cite{KNP20}, we say that a (multi-)hypergraph $\cH$ is \emph{$q$-spread} if, for every $I\subseteq V(\cH)$, we have
\begin{equation*}\label{eq:spread-def}
|\cH\cap \langle I\rangle|\le q^{|I|}|\cH|,
\end{equation*}
where $\langle I\rangle\coloneqq \{J\subseteq V(\cH):I\subseteq J\}$ and $\cH\cap \langle I\rangle$ is the set of edges  of $\cH$ in $\langle I\rangle$ (with multiplicities if $\cH$ is a multihypergraph).
The \emph{spreadness} of $\cH$ is the minimum $q$ such that $\cH$ is $q$-spread.

Let $S\in \cH$ and $X\subseteq V(\cH)$.
For any $J\in \cH$ such that $J\subseteq S\cup X$, we call the set $J\setminus X$ an \emph{$(S, X)$-fragment}.
Given some $k\in \NN$, we say that the pair $(S,X)$ is \emph{$k$-good} if some $(S,X)$-fragment has size at most $k$, and we say it is \emph{$k$-bad} otherwise.

More generally, let $\cH_0$ be some $k_0$-bounded (multi-)hypergraph. 
Let $k_0\geq k_1\geq \ldots\geq k_t$ be a sequence of integers and $X_1,\ldots,X_t$ be a sequence of subsets of $V(\cH_0)$.
Then, we define a sequence of $k_i$-bounded multihypergraphs $\cH_1,\ldots,\cH_{t}$ inductively as follows.
Let $i\in[t]$, and assume the hypergraph $\cH_{i-1}$ is already defined.
Then, consider each $S\in \cH_{i-1}$ such that $(S,X_i)$ is a $k_i$-good pair, and let $\cH_i$ be the multihypergraph which consists of one (arbitrary) $(S,X_i)$-fragment of size at most $k_i$ for each such $k_i$-good pair $(S,X_i)$.
That is, we define $\mathcal{G}_i\coloneqq\{S\in\cH_{i-1}:(S,X_i)\text{ is }k_i\text{-good}\}$ and, for each $S\in\mathcal{G}_i$, $\mathcal{J}_i(S)\coloneqq\{J\setminus X_i:J\in\cH_{i-1},J\subseteq S\cup X_i, |J\setminus X_i|\leq k_i\}$.
We then fix an arbitrary function $f_i\colon\mathcal{G}_i\to\bigcup_{S\in\mathcal{G}_i}\mathcal{J}_i(S)$ such that $f_i(S)\in\mathcal{J}_i(S)$ for every $S\in\mathcal{G}_i$ (for instance, we may simply pick the lexicographically smallest element in the set) and define
\[
\cH_{i}\coloneqq \{f_i(S):S\in\mathcal{G}_i\}.
\]
We will refer to the sequence $(\cH_0,\cH_1,\ldots,\cH_t)$ as a \emph{fragmentation process} with respect to $(k_1,\ldots, k_t)$ and $(X_1,\ldots,X_t)$. 
In our applications, we will let $X_1,\ldots,X_t$ be random subsets of $V(\cH_0)$ and choose a suitable sequence $k_0,\ldots,k_t$ which will guarantee that the hypergraphs in the sequence do not become very small (with high probability).
Observe that the fragments at the $i$-th step of this process (that is, the edges of $\cH_i$) correspond to subsets of the edges of $\cH_0$ which have not been covered by the sets $X_1,\ldots,X_i$.
In particular, for all $i\in[t]$ and all $I\subseteq V(\cH_0)$ we have that\COMMENT{Let $S\in\cH_i\cap\langle I\rangle$, so in particular $S\in\cH_i$. But then, as $\cH_i$ was obtained by a fragmentation process, $S$ is a subset of some uniquely defined $S'\in\cH_0$ .
Since $\langle I\rangle$ contains all supersets of $I$, we have that $S$ is a superset of $I$ and, thus, so is $S'$. Hence $S'\in\cH_0\cap\langle I\rangle$.\\
Note about $S'$: say that $i=1$ (for larger $i$, the same holds in an iterative way). Then, $S'$ is \emph{not} chosen so that $S=S'\setminus X_1$, but rather as the set $S'$ from which we have chosen $S$ as an $(S',X_1)$-fragment. This guarantees that, for each $S\in\cH_i$, the chosen $S'$ is distinct (possibly as an element of the multiset), and thus the above really yields the desired bound. The reason why we are still guaranteed that $S\subseteq S'$ is that $S=J\setminus X_1$ for some $J\subseteq S'\cup X_1$, so $S=J\setminus X_1\subseteq S'\setminus X_1\subseteq S'$.}
\begin{equation}\label{equa:fragmentationProperty}
    |\cH_i\cap \langle I\rangle|\leq|\cH_0\cap \langle I\rangle|.
\end{equation}

While the general framework developed in \cite{FKNP19,KNP20} works for arbitrary hypergraph thresholds, here we focus on graphs.
Let $F$ be some (possibly spanning) subgraph of the complete graph $K_n$, and let $\cF$ denote the set of all copies of $F$ in $K_n$\COMMENT{In more generality, we could define $\cF$ as the subgraph of all minimal elements of an increasing property $\mathcal{P}$, in the same way as in \cite{FKNP19}; I believe our methods would transfer as long as each element of $\cF$ has the same size, so $\cF$ is uniform; they should also work if it is not uniform, but we might need to be a bit more careful.}.
We will identify copies of $F$ from $\cF$ with their edge sets, and we thus view $\cF$ as a $k$-uniform hypergraph, where $k=|E(F)|$, on the vertex set $M\coloneqq \binom{[n]}{2}$.

We now define a strengthening of the notion of spreadness of hypergraphs which is key for our results.
For $q,\alpha,\delta\in(0,1)$, we say that a $k$-bounded hypergraph $\cF$ on vertex set $M$ is \emph{$(q,\alpha,\delta)$-superpread} if it is $q$-spread and, for any $I\subseteq M$ with $|I|\le \delta k$, we have
\begin{equation*}\label{eq:superspread-def}
|\cF\cap \langle I\rangle|\le q^{|I|} k^{-\alpha c_I} |\cF|,
\end{equation*}
where $c_I$ is the number of components of $I$ (when $I$ is viewed as a subgraph of $K_n$).
The role of the term $k^{-\alpha c_I}$ will become clear later, but, roughly speaking, it will be responsible for bounding the threshold by $O(q/\alpha)$.
The value of the constant $\delta$ actually plays no role in the result, but we do need it to be bounded away from $0$ for our approach to work.

The following result is the main lemma of the paper.
It will be used to iteratively build a spanning copy of $F$ in $\Gnp$ through a fragmentation process.

\begin{lemma}\label{lem:fragmentation}
Let $d,\alpha,\delta>0$ with $\alpha,\delta<1$.
Then, there is a fixed constant $C_0$ such that, for all $C\ge C_0$ and $n\in\NN$, the following holds.
Let $F$ be some subgraph of $K_n$ with $\Delta(F)\le d$\COMMENT{Note: this condition cannot be relaxed because we use \cref{lem:num_subgraphs}.} and $k_0\coloneqq|E(F)|=\omega(1)$, and let $\cF$ be the set of all copies of $F$ in $K_n$.
Assume that $\cF$ is $(q,\alpha,\delta)$-superspread with $q\geq4k_0/(Cn^2)$ and that $(\cH_0,\cH_1,\ldots,\cH_i)$ is some fragmentation process with $\cH_0\coloneqq \cF$ such that, for each $j\in[i]$, $\cH_j$ is $k_j$-bounded and $|\cH_j|\ge |\cH_{j-1}|/2$, and $k_i=\omega(k_0^{\alpha})$.
Then, for $w\coloneqq Cq \binom{n}{2}$, $k\coloneqq k_ik_0^{-\alpha}$ and $X$ chosen uniformly at random from $\binom{M}{w}$, we have
\begin{equation}\label{eq:fragmentation}
\EE\left[\left\lvert\left\{ (S,X) : S\in \cH_i, (S,X)\text{ is }k\text{-bad}\right\}\right\rvert\right]\le 2C^{-k/3} |\cH_i|.
\end{equation}
\end{lemma}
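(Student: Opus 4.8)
The plan is to bound the expectation in~\eqref{eq:fragmentation} by linearity: since
\[
\EE\left[\left\lvert\left\{ (S,X) : S\in \cH_i,\ (S,X)\text{ is }k\text{-bad}\right\}\right\rvert\right]=\sum_{S\in\cH_i}\PP[(S,X)\text{ is }k\text{-bad}],
\]
it suffices to show that $\PP[(S,X)\text{ is }k\text{-bad}]\le 2C^{-k/3}$ for every fixed $S\in\cH_i$; note $k=k_ik_0^{-\alpha}=\omega(1)$ by the hypothesis $k_i=\omega(k_0^{\alpha})$, so this is a meaningful bound. Fix such an $S$, and recall that $|S|\le k_i$ since $\cH_i$ is $k_i$-bounded. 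By definition, $(S,X)$ is $k$-good exactly when some $J\in\cH_i$ satisfies $J\subseteq S\cup X$ and $|J\setminus X|\le k$; as $J\subseteq S\cup X$ forces $J\setminus X\subseteq S$, this says that some edge of $\cH_i$ has its uncovered part $J\setminus X$ contained in $S$ and of size at most $k$, and it is such an edge that we shall produce with high probability over $X$. Using the standard coupling between the uniform model $\binom{M}{w}$ and a bounded number of independent $p$-random subsets of $M$ (plus a negligible amount of sprinkling to account for the exact value of $w$, and the fact that $k$-goodness is a monotone event), it is enough to work with $X=X^{(1)}\cup\dots\cup X^{(\ell)}$, where $\ell=\ell(\alpha,\delta,d)$ is a constant and each $X^{(j)}$ is an independent $p$-random subset of $M$ with $p=\Theta(Cq)$; the hypothesis $q\ge 4k_0/(Cn^2)$ guarantees that $w=Cq\binom{n}{2}$ comfortably exceeds all relevant fragment sizes.

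The heart of the argument is a greedy minimum-fragment procedure in the spirit of~\cite{FKNP19,KNP20}. We reveal $X^{(1)},\dots,X^{(\ell)}$ one at a time and maintain, after round $j$, an edge $J_j\in\cH_i$ together with its \emph{residual fragment} $R_j\coloneqq J_j\setminus(X^{(1)}\cup\dots\cup X^{(j)})$, arranged so that $J_j\subseteq S\cup X^{(1)}\cup\dots\cup X^{(j)}$ (equivalently $R_j\subseteq S$); thus $J_j$ is ``covered except for $R_j$'', and we start from $J_0=S$ with $R_0=S$. Using the randomness of $X^{(j+1)}$ together with the $(q,\alpha,\delta)$-superspreadness of $\cF$ (transferred to $\cH_i$ through the fragmentation property~\eqref{equa:fragmentationProperty}), we pass to a new edge $J_{j+1}\in\cH_i$ --- one that shares enough of its covered part with $J_j$ and acquires the rest from $X^{(j+1)}$ --- whose residual fragment has shrunk by a polynomial factor, $|R_{j+1}|\le|R_j|k_0^{-c}$, for an absolute constant $c=c(\alpha)>0$. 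The procedure stops successfully as soon as $|R_j|\le k$, which certifies $(S,X)$ to be $k$-good; choosing $\ell$ with $\ell c\ge\alpha$ then guarantees that this happens (if it happens at all) by round $\ell$, since $|R_\ell|\le|S|k_0^{-\ell c}\le k_ik_0^{-\alpha}=k$. Consequently, if $(S,X)$ is $k$-bad then some round of the procedure must have failed.

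To bound the probability that round $j+1$ fails --- conditionally on $R_j$ and on $X^{(1)},\dots,X^{(j)}$, so that $X^{(j+1)}$ is fresh randomness --- we union-bound over the possible ``shapes'' of the new residual fragment $I\subseteq S$, i.e.\ over subgraphs of $K_n$ inside $S$ with a prescribed number of edges and of components $c_I$: for each such $I$ we must bound the number of edges $J\in\cH_i$ extending the relevant part of $I$ that are otherwise coverable by $X^{(j+1)}$, together with the probability that $X^{(j+1)}$ supplies the required cover. The count of such $J$ is controlled via~\eqref{equa:fragmentationProperty} and the superspreadness bound $|\cF\cap\langle I\rangle|\le q^{|I|}k_0^{-\alpha c_I}|\cF|$: the extra saving $k_0^{-\alpha c_I}$ afforded by superspreadness (over mere $q$-spreadness) is exactly what buys the polynomial shrink $k_0^{-c}$ per round while keeping $p=\Theta(Cq)$ --- that is, with no logarithmic loss in the density and with only a constant number $\ell$ of rounds --- whereas the number of candidate shapes $I$ is kept under control by $\Delta(F)\le d$, which bounds the number of connected edge-subsets of a given size inside any copy of $F$. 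Carrying out this estimate shows that round $j+1$ fails with probability at most $C^{-\Omega(|R_j|)}$; since the procedure reaches round $j+1$ only when $|R_j|>k$ and since $\ell=O(1)$, a union bound over the (at most $\ell$) rounds gives $\PP[(S,X)\text{ is }k\text{-bad}]\le\ell\cdot C^{-\Omega(k)}\le 2C^{-k/3}$, provided $C\ge C_0$ for a suitable constant $C_0=C_0(d,\alpha,\delta)$; this yields~\eqref{eq:fragmentation}.

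The main obstacle is precisely this per-round branching estimate, which must be arranged so that simultaneously: (i)~the residual fragment always remains a subset of $S$, so that the terminal fragment is a valid certificate of $k$-goodness; (ii)~each round achieves a genuine polynomial shrink $k_0^{-c}$, which demands a delicate balancing of $p=\Theta(Cq)$, the number of completions of a partial edge in $\cH_i$ (bounded through the superspread inequality for $\cF$ and~\eqref{equa:fragmentationProperty}), and the entropy of the admissible fragment shapes (controlled by $\Delta(F)\le d$); and (iii)~the exponent in the per-round failure bound is large enough that, after the union over the $O(1)$ rounds, it still produces $C^{-k/3}$. Making (i)--(iii) close at once --- in particular, pinning down the correct exponent, getting the constant-factor bookkeeping right, and verifying that $\ell$ can be taken to depend only on $\alpha$ (and $\delta,d$) --- is the technical core of the proof; the reductions of the first paragraph and the union bound of the third are routine.
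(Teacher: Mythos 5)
Your argument takes a genuinely different route from the paper's, and there are two substantive gaps.

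First, the opening reduction is not valid. You write that by linearity it ``suffices to show $\PP[(S,X)\text{ is }k\text{-bad}]\le 2C^{-k/3}$ for every fixed $S$.'' Linearity does show that this per-$S$ bound would imply~\eqref{eq:fragmentation}, but the lemma only asserts the \emph{average} over $S\in\cH_i$, and the paper never claims (and never needs) a uniform bound for each $S$. Indeed the per-$S$ bound should be expected to fail in general: $\cH_i$ is the output of a fragmentation process applied to $\cF$, and even though $\cF$ itself is $S_n$-symmetric, $\cH_i$ need not be once $X_1,\dots,X_i$ are fixed. The hypotheses in the lemma — $k_j$-boundedness, $|\cH_j|\ge|\cH_{j-1}|/2$, and the inherited spread inequality~\eqref{equa:fragmentationProperty} — are all \emph{upper} bounds on local density, so they do not rule out an $S\in\cH_i$ that is ``isolated,'' meaning that no other $J\in\cH_i$ shares much edge-structure with it; for such an $S$ there is no $J$ with small $J\setminus X$ unless $X$ covers almost all of $S$, an event of probability far smaller than $C^{-k/3}$. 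The paper's proof, following \cite{FKNP19,KNP20}, circumvents this precisely by the pathological/nonpathological dichotomy: it reveals $Z=S\cup X$ \emph{first} and then counts, over all $Z$, how many $S\subseteq Z$ with $S\in\cH_i$ can produce a bad pair. This re-ordering is what turns a per-$S$ question (which may have no uniform answer) into a global counting bound, and it is entirely absent from your write-up.

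Second, the per-round shrinking step of the greedy procedure is not justified. You need, conditionally on $J_j$ and $R_j$, that with probability $1-C^{-\Omega(|R_j|)}$ the fresh set $X^{(j+1)}$ reveals \emph{some} $J_{j+1}\in\cH_i$ with $J_{j+1}\subseteq S\cup X^{(1)}\cup\cdots\cup X^{(j+1)}$ and a much smaller residual fragment. This requires \emph{many} candidate edges $J\in\cH_i$ sitting inside $S\cup X^{(1)}\cup\cdots\cup X^{(j)}\cup(\text{small piece})$ — i.e.\ a lower bound on the local richness of $\cH_i$ near $J_j$ — whereas the $(q,\alpha,\delta)$-superspread hypothesis and~\eqref{equa:fragmentationProperty} only give upper bounds $|\cH_i\cap\langle I\rangle|\le q^{|I|}k_0^{-\alpha c_I}|\cF|$. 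Your ``union-bound over shapes $I$'' bounds the number of competing options from above, which is the wrong direction for establishing that at least one viable $J_{j+1}$ exists. Again, the paper's proof never needs such a lower bound: in the nonpathological case it directly counts the bad $S$'s in a fixed $Z$, and in the pathological case it counts, via the expectation computation in \cref{claim:pathological} together with Markov's inequality, how many sets $Y=X\setminus S$ can create a pathological $Z$. Both steps are counting arguments that use only the upper bounds coming from superspreadness and \cref{lem:num_subgraphs}, and the polynomial saving $k_0^{-\alpha c_I}$ is used \emph{inside} that one-round counting argument (in the estimate for $f_j$), not spread over several independent exposures of $X$. In short, the multi-round exposure structure you describe belongs to the proof of \cref{thm:main}, where \cref{lem:fragmentation} is applied iteratively; it does not belong inside the proof of the lemma itself, and importing it there does not repair the missing lower-bound argument.
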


The proof of \cref{lem:fragmentation} closely follows the proofs of Lemma~3.1 from~\cite{KNP20} and Lemma~3.1 from~\cite{FKNP19}.
Therefore, for the sake of completeness, we give its proof in \cref{app:Fragmentation}, for the convenience of the interested reader.

Equipped with \cref{lem:fragmentation} we can now establish the following.

\begin{theorem}\label{thm:main}
Let $d,\alpha,\delta,\eps>0$ with $\alpha,\delta<1$.
Then, there is a fixed constant $C_0$ such that, for all $C\ge C_0$ and $n\in\NN$, the following holds.
If $F$ is a subgraph of $K_n$ with $\Delta(F)\le d$ and $k_0\coloneqq|E(F)|=\omega(1)$ and the hypergraph $\cF$ of all copies of $F$ is $(q,\alpha,\delta)$-superspread with $q\geq4k_0/(Cn^2)$, then, for $p\ge C q$,
\[
\PP\left[F\subseteq \Gnp\right]\geq1-\eps.
\]
\end{theorem}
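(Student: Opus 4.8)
The plan is to build a spanning copy of $F$ inside $\Gnp$ by revealing the edges of $\Gnp$ in a constant number $t$ of independent rounds $X_1,\ldots,X_t$, each $X_j$ being a uniformly random subset of $M=\binom{[n]}{2}$ of size $w\coloneqq Cq\binom{n}{2}$, and running a fragmentation process $(\cH_0,\cH_1,\ldots,\cH_t)$ with $\cH_0\coloneqq\cF$ along the way. The point is that a union of $t$ such independent samples stochastically dominates $\Gnp$ for $p\ge C'q$ with $C'$ depending only on $C$ and $t$ (a standard coupling, since $w/\binom{n}{2}=Cq$ and $t$ is constant), so it suffices to find $F$ in $X_1\cup\cdots\cup X_t$. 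After round $i$, the edges of $\cH_i$ are exactly the ``still-uncovered parts'' of copies of $F$ whose covered parts lie in $X_1\cup\cdots\cup X_i$; thus if at the end $\cH_t$ contains the empty edge $\emptyset$, some copy of $F$ has been fully covered and we are done.

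The key steps, in order. First, fix the parameters: choose a slowly decreasing sequence $k_0\ge k_1\ge\cdots\ge k_t$ with $k_i=k_{i-1}k_0^{-\alpha}$ at each step (so $k_i=k_0^{1-i\alpha}$, up to the $\omega(1)$ slack), and pick $t$ to be the least integer with $k_0^{1-t\alpha}=O(1)$; this $t$ depends only on $\alpha$ (and the implicit constants), hence is a fixed constant, and $C_0$ will be taken large enough in terms of $d,\alpha,\delta$ and this $t$. Second, the inductive engine: assuming $|\cH_{j}|\ge|\cH_{j-1}|/2$ for all $j\le i$ (so in particular $|\cH_i|\ge 2^{-t}|\cF|>0$), apply \cref{lem:fragmentation} with the current parameters to the sample $X_{i+1}$. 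The lemma gives
\[
\EE\bigl[\,\lvert\{(S,X_{i+1}):S\in\cH_i,\ (S,X_{i+1})\text{ is }k_{i+1}\text{-bad}\}\rvert\,\bigr]\le 2C^{-k_{i+1}/3}\lvert\cH_i\rvert,
\]
and since $k_{i+1}=\omega(1)$ (for $i+1<t$) this is $o(\lvert\cH_i\rvert)$, in fact much smaller than $\tfrac14\lvert\cH_i\rvert$ once $n$ is large; by Markov's inequality, with probability at least, say, $1-8C^{-k_{i+1}/3}$ the number of $k_{i+1}$-bad pairs is at most $\tfrac14\lvert\cH_i\rvert$, which forces $\lvert\cH_{i+1}\rvert\ge\tfrac12\lvert\cH_i\rvert$ (each $S$ that is $k_{i+1}$-good contributes a distinct fragment, by the injectivity built into the definition of the fragmentation process, cf.\ \eqref{equa:fragmentationProperty}). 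Third, a union bound over the $t$ rounds: all $t$ steps succeed with probability at least $1-\sum_{j=1}^{t}8C^{-k_j/3}\ge 1-\eps$ provided $C\ge C_0$ is large enough and $n$ is large (and for the finitely many small $n$ we can absorb them by enlarging $C_0$, or argue trivially). Finally, the endgame: after $t$ rounds $\cH_t$ is $k_t$-bounded with $k_t=O(1)$ and $\lvert\cH_t\rvert\ge 2^{-t}\lvert\cF\rvert\ge1$; one extra clean-up round (or a direct argument, since a bounded-size hypergraph with many edges that is still suitably spread must be hit by a further random set) produces a copy containing $\emptyset$, i.e.\ a spanning $F$. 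Concretely, I would run $t+1$ rounds, the last one designed so that the surviving fragments have size $0$.

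The main obstacle is the verification that the hypotheses of \cref{lem:fragmentation} are genuinely maintained throughout the process — specifically, that each $\cH_i$ remains $k_i$-bounded (immediate from the construction) \emph{and} that the doubling bound $\lvert\cH_i\rvert\ge\lvert\cH_{i-1}\rvert/2$ survives every round simultaneously with good probability, which is exactly what the union bound above must deliver; this is delicate only in that one must check $C^{-k_j/3}$ is summable to something $<\eps$ across all $j\le t$, using $k_j=\omega(1)$ for $j<t$ and choosing $C_0$ after $t$. A secondary point requiring care is the very last step: when $k_t=O(1)$ the factor $k^{-\alpha c_I}$ in the superspreadness definition no longer provides meaningful savings, so one cannot simply re-apply \cref{lem:fragmentation} with $k=k_tk_0^{-\alpha}=o(1)$; instead the final round must be handled separately, exploiting that $\cH_t$ is a $q$-spread hypergraph of bounded uniformity with $\lvert\cH_t\rvert\ge2^{-t}\lvert\cF\rvert$ edges, so a random set of size $Cq\binom n2$ covers some edge entirely with probability $\ge 1-\eps$ — essentially the base case of the Frankston–Kahn–Narayanan–Park argument, or equivalently a direct second-moment/spread computation.
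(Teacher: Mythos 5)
Your plan matches the paper's proof in all essential respects: reduce to a constant number of rounds via the standard $\Gnm$/$\Gnp$ coupling, iterate \cref{lem:fragmentation} with $k_i=k_0^{1-i\alpha}$, control each round by Markov and take a union bound over the $O(1)$ rounds, then finish with one round that cannot be another application of the fragmentation lemma. The one place where you deviate (unnecessarily) is the stopping point and the last step. You propose $t=\lceil1/\alpha\rceil$ so that $k_t=O(1)$, and then an ``extra clean-up round designed so the surviving fragments have size $0$.'' The paper instead takes $t=\lceil1/\alpha\rceil-1$, stops with $k_t=k_0^{\beta}$ where $\beta=1-t\alpha\in(0,\alpha]$, so $k_t=\omega(1)$ (but $k_tk_0^{-\alpha}=O(1)$), pads $\cH_t$ to be $k_t$-uniform, defines $Y=|\{S\in\cH_t:S\subseteq G(n,p)\}|$ with $p=Kq$, and bounds $\var[Y]=O(\EE[Y]^2/K)$ using the superspread condition, \eqref{equa:fragmentationProperty}, and \cref{lem:num_subgraphs}; Chebyshev then gives $\PP[Y=0]\le\eps$. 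Driving $k_t$ down to $O(1)$ is not needed for this variance computation (and the ``push fragments to $\emptyset$'' version would require a genuinely different estimate, since a $k$-good pair with $k\ge1$ does not certify containment of $F$). Your second alternative --- ``a direct second-moment/spread computation'' --- is precisely what the paper carries out, so the gap is only in which alternative you settle on and in not executing it; the overall scheme is the right one.
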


This result immediately provides an upper bound of $Cq$ for the threshold for the appearance of $F$ as a subgraph of $G(n,p)$\COMMENT{We do not need to use the results of Friedgut: the original result of Bollobás and Thomason (see the proof in Frieze-Karonski) already shows that $p$ as above is an upper bound on the threshold. Perhaps we should cite some of these.}.
If a matching lower bound can be found (say, by the standard first moment method\COMMENT{Can we always show that the spreadness of a hypergraph is a lower bound for the threshold?}), then this establishes the threshold for the appearance of any graph $F$ which satisfies the conditions in the statement.

The proof of \cref{thm:main} follows along similar lines as the proofs in~\cite{FKNP19,KNP20}: one proceeds in rounds of sprinkling random edges by showing that, after each round of exposure (which corresponds to a step of the fragmentation process), the random graph contains larger pieces of the desired structure (or, conversely, the missing fragments become smaller).
In the general proof in~\cite{FKNP19}, the authors show that the progress in each round shrinks the percentage of the edges from the desired structure by a factor of $0.9$, which results in logarithmically many steps and, thus, a $\log n$ factor with respect to the fractional expectation threshold of the structure (this result is quite general, though, and oftentimes a logarithmic factor is indeed needed, as in the case of spanning trees, Hamilton cycles and $K_r$-factors in random graphs, or of perfect matchings and loose Hamilton cycles in random hypergraphs).
The threshold for the square of a Hamilton cycle $K_{3,2,n}$ happens to be $n^{-1/2}$, and in this case, as shown in~\cite{KNP20}, two rounds suffice: the shrinkage factor there is $n^{-1/2}$, so that after the first round a second moment computation suffices in the second round of exposure/sprinkling.
We show that the threshold for the appearance of $F$ is at most $O(q)$ and for this we will need $1/\alpha$ rounds (exposing each time edges with probability $Cq$, for some constant $C$): the shrinkage factor in all but the last round will be $n^{-\alpha}$, so that we can apply the second moment method in the last round.

In the proof of \cref{thm:main} we make use of the following auxiliary lemma.
Again, its proof follows similarly as the proof of Proposition~2.2 in~\cite{KNP20}, and we thus defer it to \cref{app:Fragmentation}.

\begin{lemma}\label{lem:num_subgraphs}
Let $F$ be a graph with $f$ edges and maximum degree $d$.
Then, the number of subgraphs $I$ of $F$ with $\ell$ edges and $c$ components is at most
\[
(4ed)^{\ell}\binom{f}{c}.
\]
\end{lemma}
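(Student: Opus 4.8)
The plan is to encode $I$ economically; this is a refinement of the standard bound for the number of connected subgraphs of a bounded-degree graph through a fixed vertex. Fix a linear order on $E(F)$, write $\ell=|E(I)|$, and let $C_1,\dots,C_c$ be the components of $I$ (connected and pairwise vertex-disjoint), labelled so that their least edges satisfy $e_1<\dots<e_c$. Since distinct components share no vertex, the $e_j$ are distinct, so the set $\{e_1,\dots,e_c\}$ ranges over at most $\binom{f}{c}$ values; it then suffices to show that, once this set is fixed, there are at most $(4ed)^{\ell}$ possibilities for $I$.

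I would encode $I$ by a depth-first exploration. For each $j$ let $r_j$ be the smaller endpoint of $e_j$, and run DFS through the components in the order $C_1,\dots,C_c$, starting the $j$-th search at $r_j$ and, at every visited vertex, scanning its incident $F$-edges in the fixed order. Record a word over the alphabet $\{\mathtt{(},\mathtt{)},\mathtt{a}\}$: emit $\mathtt{(}$ each time a tree edge leads to a new vertex, $\mathtt{)}$ each time we backtrack to a parent, and $\mathtt{a}$ each time the scanned edge lies in $I$ and joins the current vertex to a visited ancestor; crucially, emit nothing when a non-tree edge of $I$ is met from its shallower endpoint, so that each non-tree edge is recorded exactly once, from its deeper endpoint (undirected DFS has no cross edges, so every non-tree edge does join a vertex to one of its ancestors). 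Alongside the word, record for each $\mathtt{(}$- and each $\mathtt{a}$-step which of the at most $d$ $F$-edges at the current vertex is used. Since the components are vertex-disjoint and $\{e_1,\dots,e_c\}$ fixes all roots and the search order, replaying the word reconstructs every tree edge and, from each vertex's $\mathtt{a}$-steps, its non-tree edges to ancestors; thus $I$ is recovered, and the map $I\mapsto(\{e_j\},\text{word},\text{choices})$ is injective.

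It remains to count. If $I$ has $v$ vertices, the word has $v-c$ symbols $\mathtt{(}$, the same number of symbols $\mathtt{)}$, and $\ell-(v-c)$ symbols $\mathtt{a}$; its length is $v+\ell-c\le 2\ell$ (using $v\le\ell+c$), so there are at most $3^{2\ell}$ words. The number of steps carrying a choice is $(v-c)+(\ell-v+c)=\ell$, contributing a factor of at most $d^{\ell}$. Hence, for fixed $\{e_j\}$, the number of $I$ is at most $3^{2\ell}d^{\ell}=(9d)^{\ell}\le(4ed)^{\ell}$, and multiplying by the $\binom{f}{c}$ choices for $\{e_j\}$ proves the lemma, with room to spare.

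The part I expect to need the most care is the exploration itself. Listing naively, at each visited vertex, which of its incident $F$-edges lie in $I$ would cost a factor $d^{\deg_F(\cdot)}$ per vertex and hence $d^{2\ell}$ overall, which is too weak by a power; the DFS linearisation is what makes "the current vertex" free, so each edge of $I$ is charged only one factor of at most $d$, and the ancestor/descendant convention is exactly what stops a non-tree edge from being charged at both endpoints. Verifying that this bookkeeping is consistent — every edge of $I$ recovered exactly once, and component boundaries handled unambiguously when the DFS stack empties — is routine but is the crux of the argument.
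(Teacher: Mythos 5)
Your proof is correct and structurally close to the paper's. Both arguments charge a factor controlled by $\binom{f}{c}$ for choosing roots (one per component of $I$) and then a factor of order $(\mathrm{const}\cdot d)^{\ell}$ for growing the bounded-degree connected pieces out of those roots. The paper is more modular: it invokes the standard fact that a graph of maximum degree $d$ has fewer than $(ed)^{h}$ connected $h$-edge subgraphs through a fixed vertex, and packages the bookkeeping as $2^{c}\binom{f}{c}$ (an edge plus one endpoint to root each component) times $\binom{\ell-1}{c-1}$ (distribute $\ell$ edges among the $c$ components). You instead re-derive the key bound from scratch via a global DFS encoding, and your root extraction (least edge of each component together with its smaller endpoint) saves the separate $2^{c}$ and $\binom{\ell-1}{c-1}$ factors. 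Both land comfortably inside the $(4ed)^{\ell}$ budget.

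One step needs tightening. You argue that since the word has length $v+\ell-c\le 2\ell$, there are at most $3^{2\ell}$ words. As stated this does not quite follow: the word length varies with $v$, and the number of strings over a three-letter alphabet of length \emph{at most} $2\ell$ is $\sum_{L\le 2\ell}3^{L}>3^{2\ell}$; the resulting extra constant pushes $(9d)^{\ell}$ past $(4ed)^{\ell}$ for $\ell\le 2$. The claim is nevertheless true and easy to repair: since every component of $I$ contains at least one tree edge, your word always ends with $\mathtt{)}$, so appending $\mathtt{(}$-symbols to reach length exactly $2\ell$ is a reversible padding (strip the maximal trailing run of $\mathtt{(}$'s), and the set of words injects into the $3^{2\ell}$ strings of length exactly $2\ell$. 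Alternatively, one can note that the number of words is at most the central Delannoy number $\sum_{p}\binom{\ell+p}{p}\binom{\ell}{p}<6^{\ell}$. With either patch the argument is complete.
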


\begin{proof}[Proof of \cref{thm:main}]
We first note that, by adjusting the value of $C_0$, we may assume that $n$ is sufficiently large, and therefore $k_0$ is sufficiently large too.
We may also assume that $q<C_0^{-1}$.
In the beginning, we will switch and work with the $\Gnm$ model instead of $\Gnp$.
This can be done easily since these models are essentially equivalent for $m=p\binom{n}{2}$ (see, e.g.,~\cite[Proposition~1.12]{JLR00}).

We proceed as follows.
We consider $G(n,m_1)\cup G(n,m_2)\cup\ldots\cup G(n,m_t)$ with $t=\lceil1/\alpha\rceil-1$ and $m_i=Kq\binom{n}{2}$ for each $i\in[t]$, where $K$ is assumed to be sufficiently large throughout (and $C_0$ will be defined as $2(t+1)K$).
We then define a fragmentation process on $\mathcal{F}$ with respect to $(k_1,\ldots,k_t)$ and $(G(n,m_1),\ldots,G(n,m_t))$, where the integers $k_1,\ldots,k_t$ will be defined shortly.
We prove that a.a.s.~each step of this fragmentation process satisfies the conditions of \cref{lem:fragmentation}, so that we may iteratively apply it and conclude that each of the subsequent hypergraphs is not `too small'.
At the end of this process, we will be sufficiently `close' to a copy of $F$ that a second moment argument will yield the result.

To be precise, we first consider the hypergraph $\cH_0\coloneqq \cF$ and take $X_1\coloneqq G(n,m_1)$ and $k_1\coloneqq k_0^{1-\alpha}$.
We consider a first step in the fragmentation process.
We obtain a multihypergraph $\cH_1$ of $(S,X_1)$-fragments which is $k_1$-bounded, where each $S$ is an edge of $\mathcal{H}_0$.
In particular, by the assertion~\eqref{eq:fragmentation} of \cref{lem:fragmentation} and Markov's inequality, we have that
\begin{equation}\label{eq:successful}
\PP\left[|\cH_1|\ge |\cH_0|/2\right]\ge 1-4K^{-k_1/3}. 
\end{equation}

Suppose now that we have already run the fragmentation process $(\cH_0,\cH_1,\ldots,\cH_i)$, for some $i\in[t-1]$, and that $|\cH_j|\ge |\cH_{j-1}|/2$ for all $j\in[i]$.
We run one further step of the fragmentation process with $X_{i+1}\coloneqq G(n,m_{i+1})$ and $k_{i+1}\coloneqq k_ik_0^{-\alpha}$ to obtain a $k_{i+1}$-bounded hypergraph $\cH_{i+1}$ of $(S,X_1\cup\ldots\cup X_{i+1})$-fragments (where, again, each $S$ is an edge of $\mathcal{H}_0$).
By another application of \cref{lem:fragmentation} and Markov's inequality, we obtain that
\begin{equation}\label{eq:successful2}
\PP\left[|\cH_{i+1}|\ge |\cH_i|/2\right]\ge 1-4K^{-k_{i+1}/3}.
\end{equation}

We say that the fragmentation process $(\cH_0,\cH_1,\ldots,\cH_t)$ is \emph{successful} if $|\cH_j|\ge |\cH_{j-1}|/2$ for all $j\in[t]$.
Let $\beta\coloneqq1-t\alpha$, and note that, by the definition of $t$, we have $0<\beta\leq\alpha$.
By \eqref{eq:successful} and \eqref{eq:successful2}, we conclude that the probability that the fragmentation process $(\cH_0,\cH_1,\ldots,\cH_{t-1})$ which we run is successful is\COMMENT{Note we are assuming $k_0\to\infty$ and, since $\beta$ is a positive constant (but $\beta-\alpha\leq0$), we also have $k_0^\beta\to\infty$.
This is clearly the leading term, as in all other cases the exponent is a smaller constant.}
\[
1-4\sum_{i=1}^{t}K^{-k_i/3}\ge1-4\sum_{i=1}^{\lceil{1}/{\alpha}\rceil-1}K^{-k_0^{1-i\alpha}/3}=1-O\left(K^{-k_0^{\beta}/3}\right).
\]
To summarise, a.a.s.~the fragmentation process is successful and, thus, yields a $k_{t}$-bounded multihypergraph $\cH_t$ of $(S,X_1\cup\ldots\cup X_t)$-fragments, where $k_{t}=k_0^{\beta}$, $|\cH_t|\geq2^{-t}|\cH_0|$ and each $S$ is an element of $\cH_0$. 

We now apply one more round of sprinkling.
In this final round we switch and work with the random set $X\coloneqq G(n,p)$ with $p=Kq$.
We may also assume that $\cH_t$ is $k_t$-uniform, since every set $S\in\cH_t$ is contained in some $S'\in\cF$ and thus we can add some arbitrary $k_t-|S|$ vertices from $S'\setminus S$ to~$S$.
The proof now will proceed along the same lines as the proof in~\cite[Theorem~1.2]{KNP20}.

Define the random variable $Y\coloneqq |\{S\in\cH_t:S\subseteq G(n,p)\}|$.
Our aim is to estimate the variance of~$Y$ and to show that $\PP[Y=0]\le \eps$.
This would mean that the random graph $G(n,p)\cup\bigcup_{i=1}^{t} G(n,m_i)$ contains a copy of $F$ with probability at least $1-\eps$ (by~\cite[Proposition~1.12]{JLR00}, this also applies to $G\left(n,C_0q\right)$).

We estimate the variance of $Y$ as follows (recall that we work in $G(n,p)$ now).
Let $R\in \cH_t$, so $|R|=k_t=k_0^{\beta}$.
Then, using the fact that $\cF$ is $(q,\alpha,\delta)$-superspread and \eqref{equa:fragmentationProperty}, for each $\ell\in[k_t]$ we have that
\begin{align*}
|\{S\in \cH_t: |S\cap R|=\ell\}|&\le \sum_{L\subseteq R, |L|=\ell} |\cH_t\cap\langle L\rangle|\le  \sum_{L\subseteq R, |L|=\ell}  |\cF\cap\langle L\rangle|
\le \sum_{L\subseteq R, |L|=\ell}  q^{|L|} k_0^{-\alpha c_L} |\cF|,\\
&=\sum_{c=1}^\ell \sum_{L\subseteq R, |L|=\ell, c_L=c}  q^{\ell} k_0^{-\alpha c} |\cF|
\overset{\text{\cref{lem:num_subgraphs}}}{\le} \sum_{c=1}^\ell (4ed)^\ell \binom{k_t}{c} q^{\ell} k_0^{-\alpha c} |\cF|\\
&=q^{\ell} |\cF| (4ed)^\ell \sum_{c=1}^\ell  \binom{k_t}{c} k_0^{-\alpha c} \le q^{\ell} |\cF| (4ed)^\ell \sum_{c=1}^\ell  \left(\frac{ ek_t k_0^{-\alpha}}{c}\right)^c
=q^{\ell} |\cF| e^{O(\ell)},
\end{align*}
where the implicit constants in the $O$ notation are independent of $K$.
We therefore get the following bound on the variance:
\[
\var[Y]\le p^{2k_t}\sum_{R,S\in \cH_t,\, R\cap S\neq\varnothing} p^{-|R\cap S|}\overset{|\cH_t|\le |\cF|}{\le} |\cF|^2p^{2k_t}\sum_{\ell=1}^{k_t} e^{O(\ell)}p^{-\ell}q^{\ell}=O\left(\EE[Y]^2/K\right),
\]
where we use the facts that $p=Kq$, $\EE[Y]=p^{k_t}|\cH_t|$ and $|\cH_t|\ge 2^{-t}|\cF|=\Theta(|\cF|)$.
By letting $K$ be sufficiently large, the result follows by Chebyshev's inequality.
\end{proof}

\section{Applications of Theorem~\ref{thm:main}}\label{sec:special_cases}

We use this section to prove \cref{thm:Ctwo-threshold,thm:Kr-cycle} as applications of \cref{thm:main}.

\subsection{Spanning \texorpdfstring{$C_4$}{C4}-cycles}\label{sect31}

Throughout this section, we assume that $n$ is even.
Observe that the graph $\Ctwo$ has $3n/2$ edges.
Let $\cC$ be the $(3n/2)$-uniform hypergraph on the vertex set $M=\binom{[n]}{2}$ where we see (the set of edges of) each copy of $\Ctwo$ as an edge of $\cC$.
We write $|\cC|$ for the number of its edges and notice that $|\cC|=(n-1)!/2$.
Indeed, consider an arbitrary labelling $v_1,\ldots,v_n$ of the vertices.
We define a copy of $\Ctwo$ uniquely based on this ordering: we consider a matching between the set of even vertices and odd vertices (where we add the edge $v_{2i-1}v_{2i}$ for each $i\in[n/2]$), and then we define a cycle of length $n/2$ on the set of even vertices and another cycle in the set of odd vertices (where the edges of these cycles join the vertices which are closest in the labelling, seen cyclically).
In this way, each of the $C_4$'s which conform the copy of $\Ctwo$ is given by four consecutive vertices in the labelling, starting with an odd vertex $v_{2i-1}$, so that its edges are $\{v_{2i-1}v_{2i},v_{2i+1}v_{2i+2},v_{2i-1}v_{2i+1},v_{2i}v_{2i+2}\}$.
Now one can easily verify that there are $2n$ different labellings which yield the same copy of $\Ctwo$ (there are $n/2$ possible starting points while maintaining the same cyclic ordering; if the ordering is reversed, the resulting graph is the same; and if all pairs of vertices $\{v_{2i-1}v_{2i}\}$ are swapped, the resulting graph is also the same).

Recall that a hypergraph $\cC$ is $q$-spread, for some $q\in(0,1)$, if $|\cC\cap \langle I\rangle|\le q^{|I|}|\cC|$ for all $I\subseteq M$, where $\langle I\rangle$ denotes the set of all supersets of $I$. 
Moreover, for $\alpha,\delta\in(0,1)$, we say $\cC$ is $(q,\alpha,\delta)$-superpread if it is $q$-spread and, for every $I\subseteq M$ with $|I|\le 3\delta n/2$, we have
\begin{equation*}
|\cC\cap \langle I\rangle|\le q^{|I|} (3n/2)^{-\alpha c_I} |\cC|,
\end{equation*}
where $c_I$ is the number of components of $I$.
Our main goal now is to establish that the hypergraph $\cC$ is $(250n^{-2/3},1/3,1/15)$-superspread.

\begin{lemma}\label{obs:small-edge-spread}
Let $I\subseteq\Ctwo$ be a graph with $\ell\le n/10$ edges and $c$ components.
Then, we have
\[
|V(I)|-c\geq\frac{2}{3}\ell+\frac{c}{3}.
\]
\end{lemma}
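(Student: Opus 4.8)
The plan is to understand the structure of subgraphs $I$ of $\Ctwo$ and count vertices via an Euler-type argument. Recall that $\Ctwo$ consists of $n/2$ copies of $C_4$, arranged cyclically, where consecutive copies share exactly one edge and these shared edges form a matching; each $C_4$ shares its two ``opposite'' edges with its two neighbours. Write $I$ as a disjoint union of its connected components $I_1,\dots,I_c$, with $\ell_j\coloneqq|E(I_j)|$ and $v_j\coloneqq|V(I_j)|$, so $\ell=\sum_j\ell_j$ and $|V(I)|-c=\sum_j(v_j-1)$. It therefore suffices to prove the per-component estimate $v_j-1\geq\tfrac23\ell_j+\tfrac13$, i.e.\ $3v_j\geq 2\ell_j+4$ for each nonempty component. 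Summing over $j$ then yields $|V(I)|-c\ge\tfrac23\ell+\tfrac c3$, as desired.

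First I would reduce to the single-component inequality $3v\ge 2\ell+4$ for a connected subgraph $J$ of $\Ctwo$ with $v$ vertices and $\ell$ edges, where $\ell\le n/10$. The cleanest route is to bound the ``average degree'' or, equivalently, the number of independent cycles: if $J$ has $v$ vertices, $\ell$ edges and $k$ independent cycles (first Betti number), then $\ell=v-1+k$, so $3v\ge 2\ell+4$ is equivalent to $v\ge 2k+5$... which is false for small $J$; so instead I would directly track how $\ell$ can exceed $v$. Since $J$ lies inside $\Ctwo$, every cycle in $J$ must use edges from several of the $C_4$-blocks. The key observation is that $\Ctwo$ is \emph{locally sparse}: any subgraph spanning $t$ consecutive $C_4$-blocks has at most $3t/2 + O(1)$ edges on roughly $t+O(1)$ vertices, because each block contributes only $3$ new edges and $2$ new vertices beyond the shared edge. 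Concretely, if $J$ touches exactly the blocks in some index set, and $\ell\le n/10$ forces $J$ to miss many blocks (so $J$ does not ``wrap around'' the whole cyclic structure), then the blocks touched by $J$ form a union of ``arcs''; within a single arc of $b$ consecutive blocks one has at most $3b/2+1$ edges and at least $b+2$ vertices once $b\ge1$, which after a short calculation gives $3v\ge 2\ell+4$ with room to spare. The genuinely connected case forces $J$ to live in one arc, and then the inequality follows.

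The main obstacle, and the place where the hypothesis $\ell\le n/10$ is actually used, is ruling out ``wrap-around'': a priori a connected subgraph could consist of two far-apart pieces joined through the cyclic closure, and one must ensure the global cycle structure of $\Ctwo$ does not let $J$ be both connected and abnormally edge-dense. The bound $\ell\le n/10 < n/2$ guarantees at least one block is entirely avoided, breaking the cycle of blocks into a path of blocks, after which the arc decomposition is clean and the per-component count is a finite case check on how $J$ meets each block (a block contributes at most $3$ edges, and the first/interior/last blocks of an arc contribute a controlled number of fresh vertices). I would organise this as: (i) reduce to connected $J$; (ii) use $\ell\le n/10$ to pass from the cyclic block structure to a linear one; (iii) prove $3v\ge 2\ell+4$ by summing the contribution of each block along the arc, checking the base case $b=1$ (a subgraph of a single $C_4$, where $v\ge\ell$ and $\ell\le4$, giving $3v\ge3\ell\ge2\ell+4$ once $\ell\ge4$, and an easy direct check for $\ell\le3$) and the inductive step of appending one block. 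Summation over components then completes the proof.
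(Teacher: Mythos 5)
Your proof takes a genuinely different route from the paper's, though both reduce the lemma to the same per-component inequality (namely $3v_j \ge 2\ell_j + 4$, equivalently $\ell_j \le \tfrac{3}{2}v_j - 2$, for each component $I_j$ with $v_j$ vertices and $\ell_j$ edges). The paper then argues by a direct degree-sum count: since $\Delta(\Ctwo)=3$ and a component $I_j$ with $v_j\ge 4$ that is much smaller than $\Ctwo$ must contain four vertices of total degree at most $8$, one gets $2\ell_j \le 8 + 3(v_j-4)$ and rearranges. You instead decompose $\Ctwo$ into its $C_4$-blocks, use $\ell\le n/10$ to rule out wrap-around (so each component lives in a linear arc of blocks), and run an induction along the arc. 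This is a valid alternative, and the inductive step does work once spelled out: appending one block contributes at most $2$ new vertices and at most $3$ new edges (new vertices $v_{2i+3},v_{2i+4}$ and new edges $v_{2i+1}v_{2i+3}$, $v_{2i+2}v_{2i+4}$, $v_{2i+3}v_{2i+4}$), so $3\Delta v \ge 2\Delta\ell$ holds in every case and the slack of $4$ from the base block propagates. Two inaccuracies worth fixing: an arc of $b$ consecutive blocks has $3b+1$ edges on $2b+2$ vertices, not ``$3b/2+1$ edges and $b+2$ vertices'' as you wrote (the ratio is unchanged so the conclusion survives, but it holds with equality for a full arc, so there is no ``room to spare''). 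Also, in the Betti-number aside the rearrangement should read $v\ge 2k+2$, not $v\ge 2k+5$; that formulation is in fact exactly equivalent to the target bound, so the dismissal of that route was based on an arithmetic slip rather than a genuine obstruction. Neither issue affects the soundness of the approach you ultimately took.
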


\begin{proof}
Let $I_1,\ldots,I_c$ be the components of $I$ with at least one edge, and let $v_1,\ldots,v_c$ be the number of vertices spanned by $I_1,\ldots,I_c$, respectively. 
Since for all $j\in[c]$ we have $|I_j|\le|I|\le n/10$, we conclude the following easy bound on any component: 
\begin{equation}\label{eq:edge-bound}
|I_j|\le \frac{1}{2}\left(4\cdot 2+(v_j-4)\cdot 3\right)=\frac{3}{2}v_j-2.
\end{equation}
Indeed, this holds since the maximum degree of $I$ is at most $3$ and in every component $I_j$ with $4\leq v_j\leq n/10$ there are four vertices whose sum of degrees is at most $8$.
For $v_j\in[2,3]$ we have $|I_j|= v_j-1$, hence the bound given in~\eqref{eq:edge-bound} holds in these cases as well.
Summing over all $j\in[c]$, we obtain that \[\ell=|I|\leq\frac32|V(I)|-2c,\]
which yields the desired result by rearranging the terms.
\end{proof}

\begin{lemma}\label{obs:large-edge-spread}
Let $I\subseteq\Ctwo$ be a graph with $\ell$ edges and $c$ components.
Then, we have
\[
|V(I)|-c\geq\frac{2}{3}\ell-1.
\]
\end{lemma}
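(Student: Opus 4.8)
The plan is to reduce \cref{obs:large-edge-spread} to \cref{obs:small-edge-spread} by treating separately the components of $I$ that are small and those that are large. Recall that $\Ctwo$ has exactly $3n/2$ edges and maximum degree $3$. Let $I_1,\ldots,I_c$ be the components of $I$ with at least one edge, and write $v_j\coloneqq|V(I_j)|$ and $\ell_j\coloneqq|I_j|$, so that $\ell=\sum_j\ell_j$ and $|V(I)|=\sum_j v_j$. The key inequality to establish for each individual component is
\[
v_j-1\geq\frac{2}{3}\ell_j-\frac{2}{3},
\]
i.e.\ $v_j\geq\frac23\ell_j+\frac13$, because summing this over all $j\in[c]$ yields $|V(I)|\geq\frac23\ell+\frac{c}{3}\geq\frac23\ell-1+c$ (using $c\geq1$; the case $I$ has no edges is trivial since then $\ell=0$ and $|V(I)|-c\geq0\geq -1$), which is exactly the claim after rearranging.

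So everything comes down to the per-component bound $v_j\geq\frac23\ell_j+\frac13$. For components with $v_j\leq n/10$, this is immediate from the proof of \cref{obs:small-edge-spread}: that argument's bound \eqref{eq:edge-bound}, namely $\ell_j\leq\frac32 v_j-2$, gives $v_j\geq\frac23\ell_j+\frac43\geq\frac23\ell_j+\frac13$, so small components are fine with room to spare. The remaining work is to handle a component $I_j$ with $v_j>n/10$. Here I cannot use the bound $|I_j|\le|I|\le n/10$ that powered \cref{obs:small-edge-spread}, so instead I would bound $\ell_j$ directly using only the global structure: $I_j$ is a subgraph of $\Ctwo$, which has $3n/2$ edges in total, so $\ell_j\leq 3n/2$. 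Combined with $v_j>n/10$ this gives $\frac23\ell_j+\frac13\leq n+\frac13$, and I need $v_j\geq n+\frac13$ — which is false in general, so a cruder count is not enough and I must use the actual structure of $\Ctwo$ more carefully.

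The right way to finish the large-component case is to observe that in $\Ctwo$ every vertex has degree exactly $3$ except possibly nothing — actually $\Ctwo$ is $3$-regular — so any subgraph $I_j$ satisfies $2\ell_j=\sum_{v\in V(I_j)}\deg_{I_j}(v)\leq 3v_j$, hence $\ell_j\leq\frac32 v_j$, giving $v_j\geq\frac23\ell_j\geq\frac23\ell_j+\frac13$ only if we can shave off a further constant. To get the extra $+\frac13$ (equivalently $v_j\geq\frac23\ell_j+\frac13$, i.e.\ $3v_j\geq 2\ell_j+1$) it suffices to note that a connected subgraph on $v_j$ vertices that is $3$-regular would have to be a union of components of $\Ctwo$, but $\Ctwo$ restricted to a large vertex set cannot be $3$-regular unless it is all of $\Ctwo$ (whose $3n/2$ edges still satisfy $3\cdot n = 2\cdot 3n/2$, with no slack) — so the clean argument is: if $I_j\neq\Ctwo$ then some vertex of $I_j$ has degree $\leq 2$ in $I_j$, giving $2\ell_j\leq 3v_j-1$ and hence $v_j\geq\frac23\ell_j+\frac13$; and if $I_j=\Ctwo$ then $v_j=n$, $\ell_j=3n/2$, and $v_j-1=n-1\geq n-1=\frac23\cdot\frac{3n}{2}-1=\frac23\ell_j-1$, so this single component already gives the claimed bound with $c=1$. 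Assembling the small and large components as above completes the proof. The main obstacle is exactly this large-component case: the naive degree count $\ell_j\leq\frac32 v_j$ is tight on all of $\Ctwo$ and gives no additive slack, so one must either invoke connectivity to force a low-degree vertex or handle the extremal case $I_j=\Ctwo$ by hand, and one must be careful that these two subcases together still deliver the single global $-1$ rather than a per-component loss.
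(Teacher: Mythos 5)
Your reduction hinges on the per-component inequality $v_j\geq\frac23\ell_j+\frac13$, but the step where you sum it up contains an arithmetic error that undoes the argument: the sum gives $|V(I)|\geq\frac23\ell+\frac{c}{3}$, and the further step $\frac23\ell+\frac{c}{3}\geq\frac23\ell-1+c$ is equivalent to $1\geq\frac{2c}{3}$, i.e.\ $c\leq 3/2$. So it holds only for $c=1$, the opposite of what you write (you invoke $c\geq 1$). Concretely, the deficiency-$1$ bound you establish for a large proper component (one vertex of degree at most $2$, giving $2\ell_j\le 3v_j-1$) is too weak: with two such components the sum yields only $|V(I)|-c\geq\frac23\ell-\frac43$, which is weaker than the claim. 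The argument therefore does not close whenever $c\geq 2$ and at least one component is large.

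The paper's proof avoids per-component bookkeeping of this form. For $c=1$ it uses nothing more than the handshake bound $2\ell\le 3|V(I)|$ in the $3$-regular graph $\Ctwo$, which gives $|V(I)|-1\geq\frac23\ell-1$ directly. For $c\geq 2$ it observes that every component is a proper connected subgraph of $\Ctwo$, so the four-vertex degree-deficiency argument behind \eqref{eq:edge-bound} applies to each component (the hypothesis $|I_j|\le n/10$ in \cref{obs:small-edge-spread} was only a sufficient condition for the component being a proper arc; $c\geq 2$ supplies this for a different reason), and the computation of \cref{obs:small-edge-spread} then gives $|V(I)|-c\geq\frac23\ell+\frac{c}{3}>\frac23\ell-1$. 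If you want to retain a per-component reduction, you need each proper component to have degree deficiency at least $3$, i.e.\ $v_j\geq\frac23\ell_j+1$, which does sum correctly to $|V(I)|-c\geq\frac23\ell$; this is true because $\Ctwo$ is $3$-edge-connected, but a single low-degree vertex does not provide that much.
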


\begin{proof}
Since every vertex of $\Ctwo$ has degree $3$, the bound in the statement holds trivially if $I$ has only one component.
We may thus assume that $I$ contains at least two components with at least one edge each.
But then, one can directly check that \eqref{eq:edge-bound} must hold, and we can argue exactly as in \cref{obs:small-edge-spread}, which leads to a better bound than claimed in the statement.
\end{proof}

\begin{lemma}\label{obs:general-spread}
The hypergraph $\cC$ is $(250n^{-2/3},1/3,1/15)$-superspread.
\end{lemma}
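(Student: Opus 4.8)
The plan is to estimate $|\cC\cap\langle I\rangle|$ for an arbitrary subgraph $I\subseteq K_n$ with $\ell$ edges and $c_I$ components, distinguishing two regimes according to the size of $\ell$ relative to $n$. The unifying principle is that a copy of $\Ctwo$ containing a fixed subgraph $I$ is determined by very few free choices once $V(I)$ is pinned down, and that $I$ itself occupies many vertices relative to its edge count (this is exactly what \cref{obs:small-edge-spread,obs:large-edge-spread} provide). Concretely, since $\Ctwo$ has maximum degree $3$, each copy of $\Ctwo$ is built from $n$ labelled vertices with only $2n$ labellings per copy, so I will count labelled copies extending $I$ and divide. A copy of $\Ctwo$ extending $I$ must embed the remaining $n-|V(I)|$ vertices, and the number of such extensions is crudely at most $(n-|V(I)|)!$ times a bounded factor accounting for how the components of $I$ can be glued into the cyclic structure; comparing with $|\cC|=(n-1)!/2$ will yield a bound of the shape $|\cC\cap\langle I\rangle|\le (C/n)^{|V(I)|-c_I}\cdot(\text{small correction})\cdot|\cC|$ for a suitable absolute constant $C$.

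First I would handle the \emph{small} regime $\ell\le n/10$ (more precisely $\ell\le \delta\cdot 3n/2$ with $\delta=1/15$, so $\ell\le n/10$, matching the hypothesis of \cref{obs:small-edge-spread}). Here I use $|V(I)|-c_I\ge \frac23\ell+\frac{c_I}{3}$. Feeding this into a bound of the form $|\cC\cap\langle I\rangle|\le (C/n)^{|V(I)|-c_I}|\cC|$ gives $|\cC\cap\langle I\rangle|\le (C/n)^{\frac23\ell}(C/n)^{\frac{c_I}{3}}|\cC|\le (C^{2/3}n^{-2/3})^{\ell}\,(3n/2)^{-c_I/3}\,C'|\cC|$, where I absorb the constant $3/2$ and the stray $C$-powers; choosing the constants so that $C^{2/3}$ and $C'$ together sit below $250$ gives both the $q$-spread bound with $q=250n^{-2/3}$ and the superspread refinement with $\alpha=1/3$. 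I would be careful that the ``$+c_I/3$'' in \cref{obs:small-edge-spread} is precisely what converts one factor of $n^{-1}$ per edge into the required $(3n/2)^{-\alpha c_I}$ term; this is the whole point of having proved the sharper inequality for small $I$.

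Next, the \emph{large} regime $\ell> n/10$: here $I$ is a constant fraction of the whole edge set, so the superspread refinement is vacuous (we only need $|I|\le 3\delta n/2$ for that), and it suffices to verify plain $q$-spreadness, i.e.\ $|\cC\cap\langle I\rangle|\le (250n^{-2/3})^{\ell}|\cC|$. Using \cref{obs:large-edge-spread}, $|V(I)|-c_I\ge\frac23\ell-1$, and the same counting bound $|\cC\cap\langle I\rangle|\le (C/n)^{|V(I)|-c_I}|\cC|$ gives $|\cC\cap\langle I\rangle|\le (C/n)^{\frac23\ell-1}|\cC|=(C/n)\cdot(C^{2/3}n^{-2/3})^{\ell}|\cC|$. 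Since $\ell> n/10$ and $C$ is a constant, the prefactor $(C/n)\cdot C^{2/3\cdot(\text{anything})}$ is easily dominated: concretely $(C^{2/3}n^{-2/3})^{\ell}$ already beats $(250n^{-2/3})^{\ell}$ with room to spare for large $n$ once $C$ is fixed, and the extra $C/n\le 1$ factor only helps. One still has to confirm that no $I$ with $\ell>n/10$ can be too ``dense'' in a way the crude labelled-extension count misses, but the degree-$3$ structure makes the count $(n-|V(I)|)!\cdot(\text{const})$ robust.

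The main obstacle I anticipate is getting the labelled-extension count clean enough to produce an \emph{honest} factor of $(C/n)^{|V(I)|-c_I}$ rather than something weaker like $(C/n)^{|V(I)|}$ times an uncontrolled $c_I$-dependent gluing factor. One has to argue that, given $I$, the number of copies of $\Ctwo\supseteq I$ is at most (number of ways to order the $n-|V(I)|$ remaining vertices) times (bounded number of ways, per component, to insert that component into the cyclic double-cycle-plus-matching template) — and crucially that the ``bounded per component'' factor is an absolute constant, not something growing with $c_I$ in a way that eats the $(3n/2)^{-\alpha c_I}$ saving. The resolution is that each component of $I$, being a fixed small subgraph of $\Ctwo$, can attach to the global cyclic structure in only $O(1)$ rotational/reflectional ways, and the product over $c_I$ components of these $O(1)$ factors is $2^{O(c_I)}$, which is absorbed into $(C/n)^{-c_I}$ by enlarging $C$ — this is exactly why the inequalities in \cref{obs:small-edge-spread,obs:large-edge-spread} are stated with a linear-in-$c_I$ slack. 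Once that bookkeeping is pinned down, the two regimes above finish the proof.
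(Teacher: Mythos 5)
Your approach matches the paper's: you count labellings/orderings of $[n]$ that yield a copy of $\Ctwo$ containing $I$, feed \cref{obs:small-edge-spread,obs:large-edge-spread} into the resulting factorial exponent, and split into the small ($\ell\le n/10$, where the superspread refinement is needed and the $c/3$ slack supplies it) and large ($\ell>n/10$, where only plain spreadness is required) regimes. That is exactly the paper's structure.

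There is, however, one genuine slip in your intermediate counting that would not survive being written out. You claim the number of extensions is ``at most $(n-|V(I)|)!$ times a bounded factor accounting for how the components of $I$ can be glued into the cyclic structure,'' with the per-component gluing factor an absolute constant. But inserting each of the $c$ components into the cyclic order requires choosing its \emph{position}, and there are $\Theta(n)$ positions, not $O(1)$; so the honest extension count is $(n-|V(I)|+c)!$ times $O(1)^{|I|}$ (order the $n-|V(I)|$ isolated vertices together with one representative $x_j$ per component, then extend each component from $x_j$ with at most $\Delta(\Ctwo)=3$ choices per vertex), not $(n-|V(I)|)!$ times $O(1)^{c}$. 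The two differ by roughly $n^{c-1}$. Conveniently, the final bound you assert, $|\cC\cap\langle I\rangle|\le (C/n)^{|V(I)|-c}\cdot(\text{small correction})\cdot|\cC|$, is the one that the corrected count $(n-|V(I)|+c)!$ actually produces, and the remainder of your two-regime calculation then goes through just as in the paper; so once this bookkeeping is repaired the proof is essentially identical to theirs.
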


\begin{proof}
Let $I\subseteq M$.
We need to obtain upper bounds for $|\cC\cap \langle I\rangle|$.
If $I$ is not contained in any copy of $\Ctwo$, then $|\cC\cap \langle I\rangle|=0$, so we may assume $I$ is a subgraph of some copy of $\Ctwo$.
Recall that a copy of $\Ctwo$ can be defined by an ordering of $[n]$ and that exactly $2n$ such orderings define the same copy of $\Ctwo$. 
Thus, it suffices to bound the number of orderings of $[n]$ which define a copy of $\Ctwo$ containing $I$.

Let $I_1,\ldots,I_c$ be the components of $I$ which contain at least one edge.
For each $j\in[c]$, choose a vertex $x_j\in V(I_j)$ (note that there are $v_j$ possible choices for this, which leads to a total of 
\begin{equation}\label{equa:bound1C4}
    \prod_{j=1}^cv_j\leq 2^{|I|}
\end{equation}
choices for $\{x_1,\ldots,x_c\}$).
Now, each ordering $\sigma$ of $[n]$ (recall this defines a copy of $\Ctwo$) induces an ordering on the set consisting of the vertices $x_1,\ldots,x_c$ as well as all isolated vertices in $I$.
Let us denote this induced ordering as $\tau=\tau(\sigma)$.
We now want to bound the total number of possible orderings $\sigma$ by first bounding the number of orderings $\tau$ (which depend on the choice of $x_1,\ldots,x_c$) and then the number of orderings $\sigma$ with $\tau=\tau(\sigma)$.

After the choice of $x_1,\ldots,x_c$, the number of possible orderings $\tau$ is \begin{equation}\label{equa:bound2C4}
    (n-|V(I)|+c)!.
\end{equation}
Now, in order to obtain some $\sigma$ such that $\tau=\tau(\sigma)$, it suffices to `insert' the vertices which are missing into the ordering, and this must be done in a way which is consistent with the structure of the components $I_j$.
For each $j\in[c]$, consider a labelling of the vertices of $I_j$ starting with $x_j$ and such that each subsequent vertex has at least one neighbour with a smaller label.
Then, we insert the vertices of $I_j$ into the ordering following this labelling, and note that, for each vertex, there are at most three choices, as $\Delta(I_j)\leq3$.
This implies there are at most $3^{|V(I_j)|-1}\leq3^{|I_j|}$ possible ways to fix the ordering of the vertices of $I_j$.
By considering all $j\in[c]$, we conclude that there are at most
\begin{equation}\label{equa:bound3C4}
    \prod_{j=1}^c 3^{|I_j|}\le 3^{|I|}
\end{equation} 
possible orderings $\sigma$ which result in the same $\tau$.

Combining \eqref{equa:bound1C4}, \eqref{equa:bound2C4} and \eqref{equa:bound3C4} with the fact that there are $2n$ distinct orderings $\sigma$ which result in the same copy of $\Ctwo$, we conclude that
\begin{equation}\label{equa:bound4C4}
|\cC\cap \langle I\rangle|\le \frac{6^{|I|}}{2n}(n-|V(I)|+c)!\le 6^{|I|}(n-|V(I)|+c-1)!.
\end{equation}

We can now estimate the spreadness of $\cC$. 
Consider first any $I\subseteq M$ with $|I|\leq n/10=|\Ctwo|/15$, and let $c$ be its number of components.
Then, by substituting the bound given by \cref{obs:small-edge-spread} into \eqref{equa:bound4C4}, we conclude that
\[|\cC\cap \langle I\rangle|\leq 6^{|I|}\left(n-\frac23|I|-\frac{c}{3}-1\right)!.\]
By using the bound on $|I|$ and taking into account that $|\cC|=(n-1)!/2$ and $|\Ctwo|=3n/2$, we conclude that $|\cC\cap \langle I\rangle|\leq q^{|I|}|\Ctwo|^{-c/3}|\cC|$ for $q\geq250n^{-2/3}$\COMMENT{Let $\ell\coloneqq |I|$.
It suffices to check that
\[6^{\ell}\left(n-\frac{2}{3}\ell-\frac{c}{3}-1\right)!\leq q^\ell\left(\frac32n\right)^{-c/3}\frac{(n-1)!}{2}.\]
By Stiring's approximation, it suffices to check that (for sufficiently large $n$)
\[q^\ell\geq4\cdot6^{\ell}\left(\frac{n}{n-\frac{2}{3}\ell-\frac{c}{3}}\right)^{1/2}e^{2\ell/3}\left(e\frac32\frac{n}{n-\frac{2}{3}\ell-\frac{c}{3}}\right)^{c/3}\left(\frac{n-\frac{2}{3}\ell-\frac{c}{3}}{n}\right)^n\left(n-\frac{2}{3}\ell-\frac{c}{3}\right)^{-2\ell/3}.\]
By taking roots, we want to have
\[q\geq\left(4\left(\frac{n}{n-\frac{2}{3}\ell-\frac{c}{3}}\right)^{1/2}\right)^{1/\ell}6e^{2/3}\left(e\frac32\frac{n}{n-\frac{2}{3}\ell-\frac{c}{3}}\right)^{c/(3\ell)}\left(\frac{n-\frac{2}{3}\ell-\frac{c}{3}}{n}\right)^{n/\ell}\left(n-\frac{2}{3}\ell-\frac{c}{3}\right)^{-2/3}.\]
Now consider each term in the expression above.
The term inside the first big parenthesis tends to $1$ as $\ell$ goes to infinity, and it is always bounded by some constant (say, it is at most $8$, since we have a upper bound on $\ell$ and, thus, also on $c$ that leads to the thing inside the second parenthesis being a constant smaller than $2$).
The term $6e^{2/3}$ remains as is.
The next term can be bounded by $(3e/2)^{1/3}$ since $c\leq\ell$.
The next term can be bounded simply by $1$.
The final term, then, is at most $(n/2)^{-2/3}$.
Putting all of these bound together, it follows that it suffices to have
\[q\geq8\cdot6e^{2/3}(3e/2)^{1/3}(n/2)^{-2/3}=2^{13/3}3^{4/3}e n^{-2/3}.\]
In particular, the constant above is at most $250$.}.

Similarly, assume $I\subseteq M$ has $|I|>n/10$ edges and $c$ components.
By substituting the bound given by \cref{obs:large-edge-spread} into \eqref{equa:bound4C4}, we now have that
\[|\cC\cap \langle I\rangle|\leq 6^{|I|}\left(n-\frac23|I|\right)!.\]
Now, as above, we conclude that $|\cC\cap \langle I\rangle|\leq q^{|I|}|\cC|$ for $q\geq12n^{-2/3}$.\COMMENT{Let $\ell\coloneqq|I|$.
It suffices to show that
\[6^\ell\left(n-\frac{2}{3}\ell\right)!\leq q^\ell\frac{(n-1)!}{2}.\]
By making use of Stirling's approximation, we have that $(n-2\ell/3)!\geq\sqrt{2\pi(n-2\ell/3)}((n-2\ell/3)/e)^{n-2\ell/3}$, and using the same approximation for $n!$, we conclude that it suffices to have 
\[q^\ell\geq4n\frac{\sqrt{n-2\ell/3}}{\sqrt{n}}\cdot6^\ell e^{2\ell/3}\frac{\left(n-\frac{2}{3}\ell\right)^{n-2\ell/3}}{n^n}.\]
Now note that 
\[4n\frac{\sqrt{n-2\ell/3}}{\sqrt{n}}\cdot6^\ell e^{2\ell/3}\frac{\left(n-\frac{2}{3}\ell\right)^{n-2\ell/3}}{n^n}\leq4n6^\ell e^{2\ell/3}\frac{\left(\left(1-\frac{2\ell}{3n}\right)n\right)^{n-2\ell/3}}{n^n}\leq4n6^\ell e^{2\ell/3}n^{-2\ell/3}.\]
Therefore, it suffices to have 
\[q\geq(4n)^{1/\ell}6 e^{2/3}n^{-2/3}.\]
But now, by the bound $\ell\geq n/10$, we have that $(4n)^{1/\ell}\to1$, so in particular the inequality holds (for sufficiently large $n$) if $q=6.1e^{2/3}n^{-2/3}\leq12n^{-2/3}$.}

Combining the two statements above, it follows by definition that $\cC$ is $((250n^{-2/3},1/3,1/15))$-superspread, as we wanted to see.
\end{proof}

\begin{proof}[Proof of \cref{thm:Ctwo-threshold}]
\Cref{obs:general-spread,obs:small-edge-spread} establish that $\cC$ is $(250n^{-2/3},1/3,1/15)$-superspread. 
By \cref{thm:main} we have that, if $p\ge Cn^{-2/3}$, where $C$ is a sufficiently large constant, then
\[
\PP\left[\Ctwo\subseteq \Gnp\right]\ge 1/2.
\]
To finish the argument, one can employ a general result of Friedgut~\cite{Fri05} (see, e.g., a recent paper of \citet{NS20}) which allows to establish that 
\[\PP\left[\Ctwo\subseteq G(n,(1+o(1))p)\right]=1-o(1).\qedhere\]
\end{proof}

\subsection{Spanning \texorpdfstring{$K_r$}{Kr}-cycles}

In the following we will study copies of $K_r$ arranged in a cyclic way.
Since there are several ways how two consecutive copies of $K_r$ can overlap, we provide a precise definition of what will be called an $s$-overlapping $K_r$-cycle.

\begin{definition}\label{def:Krsn}
Let $r> s\ge 0$ and $n\in \NN$ with $(r-s)\mid n$ be integers.
A $\Krsn$-cycle is a graph on vertex set $\ZZ_n=[0,n-1]$ whose edge set is the union of the edge sets of $n/(r-s)$ copies of $K_r$, where for each $i\in[0,n/(r-s)-1]$ there is a copy of $K_r$ on the vertices $[i(r-s),i(r-s)+r-1]$ (modulo $n$).
\end{definition}

In other words, the $n/(r-s)$ copies of $K_r$ are arranged cyclically on the vertex set $\ZZ_n$, so that two consecutive copies of $K_r$ intersect in exactly $s$ vertices and two non-consecutive cliques intersect in as few vertices as possible.

The case $s=0$ corresponds to a $K_r$-factor.
The threshold for the property of containing a $K_r$-factor was famously determined by \citet{JohKahVu08}.
For the case $s=1$, the copies of $K_r$ in $K_{r,1,n}$ are edge-disjoint.
As mentioned in the introduction, the threshold for the appearance of $K_{r,1,n}$ in $\Gnp$ was recently determined by \citet{Frieze20}.
When $s=r-1$, the $s$-overlapping $K_r$-cycles are usually referred to as the $(r-1)$-th power of a Hamilton cycle $C_n$, where the $k$-th power of some arbitrary graph $G$ is obtained by connecting any two vertices of $G$ which are at distance at most $k$ with an edge.
The threshold for the appearance of $K_{r,r-1,n}$ is known to be $n^{-1/r}$.
This was observed by \citet{KO12} for $r\ge 4$, while the case $r=3$ was solved recently by \citet{KNP20}.

We determine the threshold for the appearance of $\Krsn$ for all the remaining values of $r$ and~$s$.
Whenever $s\geq3$, the result follows from a general result of \citet{Ri00}; see \cref{sec:conclude}.
Our main focus here is on the cases when $s=2$.
The overall strategy follows the same structure as in \cref{sect31}. 

We denote the set of all unlabelled copies of $\Krsn$ on $[n]$ by $\cCrsn$.
When talking about subgraphs of $\Krsn$, we refer to sets of consecutive vertices as \emph{segments}.
The \emph{length} of a segment is the number of vertices it contains.

First, we observe the following several simple facts about $\Krsn$.

\begin{fact}\label{fact:prop_Krsn}
Let $r> s\ge 0$ and $n\in \NN$ with $(r-s)\mid n$.
Then, the number of edges in $\Krsn$ is exactly 
\[\left(\binom{r}{2}-\binom{s}{2}\right)\frac{n}{r-s}=\frac{1}{2}(r+s-1)n.\] 
In particular, for $s\le r/2$, the number of vertices of degree $2r-s-1$ is exactly ${sn}/({r-s})$ (such vertices belong to two copies of $K_r$), whereas the remaining ${(r-2s)n}/{(r-s)}$ vertices have degree $r-1$ (these vertices belong to exactly one copy of $K_r$).\qed
\end{fact}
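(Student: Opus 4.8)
This is an elementary counting exercise; the plan is to prove the degree statement first (for $s\le r/2$), since the handshake lemma then yields the edge count, and to additionally record a direct inclusion--exclusion argument for the number of edges that works in the full range $r>s$ (modulo a non-degeneracy condition on $n$ discussed at the end).

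\emph{Degrees.} The key step is to count, for each vertex $v\in\ZZ_n$, how many of the $n/(r-s)$ copies of $K_r$ contain it. Writing $A_i\coloneqq[i(r-s),i(r-s)+r-1]$ (indices modulo $n$) for the $i$-th copy, one checks that $v\in A_i$ precisely when $i$ ranges over a real interval of length $(r-1)/(r-s)$; when $s\le r/2$ this length is strictly smaller than $2$, hence the interval contains one or two integers, so every vertex lies in exactly one or exactly two copies. Furthermore, two copies sharing a vertex must be consecutive, since three of the arcs $A_i$ would span more than $r$ consecutive residues---impossible under $s\le r/2$ (ignoring wrap-around, addressed below). A vertex in $A_i\cap A_{i+1}$ lies in a set of exactly $s$ vertices associated with a unique consecutive pair, and there are $n/(r-s)$ such pairs, so exactly $sn/(r-s)$ vertices lie in two copies; each of these has degree $|A_i\cup A_{i+1}|-1=2r-s-1$, and each of the remaining $n-sn/(r-s)=(r-2s)n/(r-s)$ vertices lies in a single copy and so has degree $r-1$.

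\emph{Edges.} From this degree sequence and the handshake lemma, $2|E|=\frac{sn}{r-s}(2r-s-1)+\frac{(r-2s)n}{r-s}(r-1)=\frac{n}{r-s}\bigl(r(r-1)-s(s-1)\bigr)$. Alternatively, and without assuming $s\le r/2$: the $n/(r-s)$ copies contribute $\binom r2$ edges each, each consecutive pair of copies shares exactly a $K_s$ and thus $\binom s2$ edges, and (given the non-degeneracy condition) no edge lies in three copies nor in two non-consecutive copies, so $|E|=\frac{n}{r-s}\bigl(\binom r2-\binom s2\bigr)$. The identity $\binom r2-\binom s2=\frac{(r-s)(r+s-1)}{2}$ then gives $|E|=\frac12(r+s-1)n$ and, simultaneously, the equality of the two displayed expressions in the statement.

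The one point needing care is the non-degeneracy of the cyclic arrangement: one has to verify that consecutive copies of $K_r$ meet in exactly $s$ vertices (not more) and that no vertex belongs to a third copy, which could fail if $n$ is very small relative to $r$. This holds as soon as $n$ is large enough---when $s\le r/2$ it suffices that there are at least three copies---and once it is in place the rest is routine bookkeeping.
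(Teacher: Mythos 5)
Your degree analysis and the handshake-lemma derivation of the edge count are correct for $s\le r/2$, which is the only regime in which the paper needs the degree statement, and your observation that three copies already suffice for non-degeneracy in this range is also right. However, the alternative inclusion--exclusion argument you offer ``without assuming $s\le r/2$'' does not work as stated: the assertion that ``no edge lies in three copies nor in two non-consecutive copies'' is not a non-degeneracy condition that holds once $n$ is large, but a structural property that fails for every $n$ whenever $s>r/2$. Indeed $|A_i\cap A_{i+2}|=\max\{2s-r,0\}$, so second-neighbour cliques always share vertices in that range and share edges as soon as $2s-r\ge 2$; at the extreme $s=r-1$, a short edge such as $\{v,v+1\}$ lies in $r-1$ of the cliques. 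Hence the count $|E|=\tfrac{n}{r-s}\binom{r}{2}-\tfrac{n}{r-s}\binom{s}{2}$ is not a valid inclusion--exclusion when $s>r/2$.

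The edge formula is nonetheless correct for all $0\le s<r$, and a small repair keeps the spirit of your bookkeeping. The set of indices $i$ with $e\in E(A_i)$ is, for every edge $e$ (and $n$ not tiny), a nonempty proper cyclic arc of $\ZZ_{n/(r-s)}$, so one may assign each edge to the first clique $A_i$ of its arc. An edge is new to $A_i$ exactly when it has an endpoint in $A_i\setminus A_{i-1}$, because $A_j\cap A_i\subseteq A_{i-1}\cap A_i$ for every $j$ preceding $i$ in the arc; there are $\binom{r}{2}-\binom{s}{2}$ such edges for each $i$, which gives $|E|=\tfrac{n}{r-s}\left(\binom{r}{2}-\binom{s}{2}\right)$ with no restriction on $s$.
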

 
We will call vertices of $\Krtn$ with degree $2r-3$ \emph{heavy} and those with degree $r-1$ \emph{light}.

\begin{fact}\label{fact:number-cCrsn}
Let $r> s\ge 1$ and $n\in \NN$ with $(r-s)\mid n$ and $s\le r/2$. 
We have 
\[|\cCrsn|=\frac{(n-1)! (r-s)}{2((r-2s)!)^{n/(r-s)}(s!)^{n/(r-s)}}=\frac{r-s}{2}d_{r,s}^n(n-1)!,\]
where $d_{r,s}\in (0,1]$ is some absolute constant that depends on $s$ and $r$ only.\COMMENT{There are $n!$ permutations of the vertices.
Given this, there are $2n/(r-s)$ equal cyclic orders (the starting point matters up to $r-s$ choices, and after that the order would repeat itself, and we can go in either of the two directions).
Furthermore, for each set of consecutive vertices of degree $r-1$, we can reorder them in any way and obtain the same graph; there are $n/(r-s)$ sets of $r-2s$ such consecutive vertices.
Similarly, for each set of consecutive vertices of degree $2r-s-1$ contained in the same edges, we can reorder them in any way and obtain the same graph; there are $n/(r-s)$ sets of $s$ such consecutive vertices.} \qed
\end{fact}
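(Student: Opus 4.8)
The plan is to count the copies of the fixed graph $H\coloneqq\Krsn$ (take $V(H)=[n]$) inside $K_n$ via orbit--stabiliser. The group $S_n$ acts transitively on $\cCrsn$ by relabelling vertices and, since $H$ spans $[n]$, the stabiliser of the standard copy is precisely $\mathrm{Aut}(H)$; hence $|\cCrsn|=n!/|\mathrm{Aut}(\Krsn)|$, and the whole task reduces to computing $|\mathrm{Aut}(\Krsn)|$. (Equivalently, exactly as for $C_4$-cycles in \cref{sect31}, one may instead count the orderings $v_1,\dots,v_n$ of $[n]$ that induce one fixed copy of $\Krsn$ via \cref{def:Krsn}; this number equals $|\mathrm{Aut}(\Krsn)|$.)

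So I would describe $\mathrm{Aut}(\Krsn)$ explicitly. Write $m\coloneqq n/(r-s)$ for the number of cliques, which is large since $r$ and $s$ are fixed, and let $Q_0,\dots,Q_{m-1}$ be the cliques of \cref{def:Krsn}. As $s\le r/2$, non-consecutive cliques are vertex-disjoint, so $V(H)$ partitions into $m$ \emph{overlap blocks} $O_i\coloneqq Q_i\cap Q_{i+1}$ of size $s$, whose vertices have degree $2r-s-1$, together with $m$ \emph{private blocks} $P_i$ of size $r-2s$, whose vertices have degree $r-1$; moreover $Q_i=O_{i-1}\cup P_i\cup O_i$ (cf.\ \cref{fact:prop_Krsn}). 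Two families of automorphisms are immediate. First, all vertices in a common block have the same neighbourhood --- namely $Q_i\setminus\{x\}$ for $x\in P_i$, and $(Q_i\cup Q_{i+1})\setminus\{x\}$ for $x\in O_i$ --- so every permutation of $V(H)$ that fixes each block setwise and acts arbitrarily within it is an automorphism, contributing a subgroup of order $((r-2s)!)^m(s!)^m$. Second, the clique cycle $Q_0Q_1\cdots Q_{m-1}$ carries the dihedral group $D_m$ of order $2m$: a rotation $Q_i\mapsto Q_{i+j}$ is realised by $x\mapsto x+j(r-s)$ on $\ZZ_n$, and the reflection $Q_i\mapsto Q_{-i}$, which sends $O_i\mapsto O_{-i-1}$ and $P_i\mapsto P_{-i}$, lifts to an automorphism since it carries $Q_i=O_{i-1}\cup P_i\cup O_i$ onto $O_{-i}\cup P_{-i}\cup O_{-i-1}=Q_{-i}$.

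The next step is to show these generate all of $\mathrm{Aut}(\Krsn)$. For $n$ large the only copies of $K_r$ inside $\Krsn$ are the $Q_i$ themselves, so every automorphism permutes $\{Q_0,\dots,Q_{m-1}\}$ while preserving the relation ``$Q_a$ and $Q_b$ meet'', which turns the cliques into a cycle $C_m$; thus the induced action on cliques is a homomorphism $\Phi\colon\mathrm{Aut}(\Krsn)\to\mathrm{Aut}(C_m)=D_m$. It is surjective, because its image contains the clique-actions of the rotations and of the reflection above, which generate $D_m$; and its kernel is exactly the block-fixing subgroup of order $((r-2s)!)^m(s!)^m$, since an automorphism fixing every $Q_i$ setwise must fix every block setwise (within $Q_i$ the degree classes separate $P_i$ from $O_{i-1}\cup O_i$, and $O_i=Q_i\cap Q_{i+1}$ is determined by the neighbouring clique). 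Hence $|\mathrm{Aut}(\Krsn)|=|\ker\Phi|\cdot|D_m|=2m\,((r-2s)!)^m(s!)^m=\frac{2n}{r-s}((r-2s)!)^{n/(r-s)}(s!)^{n/(r-s)}$, and substituting into $|\cCrsn|=n!/|\mathrm{Aut}(\Krsn)|$ yields
\[
|\cCrsn|=\frac{n!}{\tfrac{2n}{r-s}\,((r-2s)!)^{n/(r-s)}(s!)^{n/(r-s)}}=\frac{(n-1)!\,(r-s)}{2\,((r-2s)!)^{n/(r-s)}(s!)^{n/(r-s)}}.
\]
Setting $d_{r,s}\coloneqq\bigl((r-2s)!\,s!\bigr)^{-1/(r-s)}$, which depends only on $r$ and $s$ and lies in $(0,1]$ because $(r-2s)!\,s!\ge1$, we have $d_{r,s}^{\,n}=((r-2s)!\,s!)^{-n/(r-s)}$, so the right-hand side equals $\frac{r-s}{2}\,d_{r,s}^{\,n}(n-1)!$, as required.

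The main obstacle is the second step --- ruling out ``exotic'' automorphisms --- which rests on the intuitively clear but somewhat technical fact that, for $n$ sufficiently large, every $K_r$-subgraph of $\Krsn$ is one of the designated cliques $Q_i$, so that the clique-cycle structure (and with it the copy of $D_m$) is canonical. One should also note the degenerate case $r=2s$, where the private blocks are empty: the formula stays correct under the convention $0!=1$, and the dihedral argument is unaffected since $m$ is large; small values of $m$ (where consecutive cliques could coincide or overlap in more than $s$ vertices) do not arise in our applications, where $n\to\infty$.
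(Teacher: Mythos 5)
Your proof is correct and follows essentially the same approach as the paper's (which is sketched only in a comment): both count the $n!$ vertex orderings and divide by the number of orderings yielding the same copy, i.e.\ by $|\mathrm{Aut}(\Krsn)|$. You go further than the paper's sketch by actually proving that the dihedral symmetries and within-block permutations exhaust $\mathrm{Aut}(\Krsn)$, via the observation that the $Q_i$ are the only $K_r$'s and the resulting homomorphism onto $\mathrm{Aut}(C_m)=D_m$; the paper simply asserts the equivalent-ordering count.
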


\begin{fact}\label{fact:consec_Krsn}
Let $r\ge 4$ and $n\in \NN$ with $(r-2)\mid n$.
Let $V\subseteq V(\Krtn)$ be a segment starting in the first vertex of some clique $K_r$ with $|V|\le n/(2r)+1$\COMMENT{This bound (or a weaker form) is necessary in the sense that, if the graph wraps around, then the number of edges can be slightly larger than described below.}.
Then, 
\begin{equation}\label{eq:max-subgraph}
e(\Krtn[V])=\left(\binom{r}{2}-1\right)a+\binom{b}{2}-\max\{2-b,0\}\cdot(r-2),
\end{equation}
where $|V|=(r-2)a+b$ with $a,b\in \NN_0$ and $0\le b< r-2$.\COMMENT{
\begin{proof}
Let $v=(r-2)a+b$ with $a,b\in \NN_0$ and $0\le b< r-2$. 
We may assume that $V=[0,v-1]$. 
We distinguish the following cases: $b\in\{0, 1\}$ and $2\le b< r-2$.
Assume first that $b\in\{0, 1\}$.
Then, we have 
\[
e(I)=\left(\binom{r}{2}-1\right)a+\binom{b}{2}-(2-b)(r-2)=\left(\binom{r}{2}-1\right)(a-1)+\binom{r-2+b}{2}
\]
(we have $a$ contributions of $\binom{r}{2}-1$, where we count edges ($a$ times) on a set of size $r-2$ and between this set and the next two vertices of $V$ (note this guarantees that we do not count the edge induced by said pair of vertices twice); additionally we get $\binom{b}{2}$ edges, but we have to correct this for the last `full' clique in $V$, since it will miss exactly $2-b$ vertices and, therefore, $(2-b)(r-2)$ edges).\\
Assume next that $2\le b< r-2$.
By the same argument as above, we have $e(I)=\left(\binom{r}{2}-1\right)a+\binom{b}{2}$.
\end{proof}
} \qed
\end{fact}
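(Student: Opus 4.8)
The plan is to reduce to a concrete segment and then count edges clique by clique via inclusion--exclusion. Write $v\coloneqq|V|=(r-2)a+b$ with $0\le b<r-2$. Since shifting $\ZZ_n$ by $r-2$ is an automorphism of $\Krtn$ (it carries the clique on $[i(r-2),i(r-2)+r-1]$ to the one on $[(i+1)(r-2),(i+1)(r-2)+r-1]$) and $V$ starts at the first vertex of a clique, we may assume $V=[0,v-1]$; we may also assume $a\ge1$, since for $a=0$ the segment lies inside a single $K_r$ and the identity is immediate. For $j\in[0,n/(r-2)-1]$ let $K^{(j)}$ denote the clique on $[j(r-2),j(r-2)+r-1]$ (indices mod $n$). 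The point of the hypothesis $|V|\le n/(2r)+1$ is to exclude ``wrap-around'': for $n$ not too small it forces the only clique meeting $V$ other than $K^{(0)},\ldots,K^{(i^\ast)}$ (where $i^\ast$ is the largest $j$ with $K^{(j)}\cap V\ne\varnothing$, so $i^\ast=a$ if $b\ge1$ and $i^\ast=a-1$ if $b=0$) to be the clique $K^{(n/(r-2)-1)}$ straddling the origin, and it forces that clique to meet $V$ in just $\{0,1\}$ --- an edge that already lies in $K^{(0)}$. Hence every edge of $\Krtn[V]$ lies in some $K^{(j)}$ with $0\le j\le i^\ast$, and two of these share an edge exactly when they are consecutive and their two common vertices both lie in $V$.

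With this structure the identity is a counting exercise. Inclusion--exclusion gives
\[
e(\Krtn[V])=\sum_{j=0}^{i^\ast}e\big(K^{(j)}[V]\big)-\#\big\{j\in[0,i^\ast-1]:\{(j+1)(r-2),(j+1)(r-2)+1\}\subseteq V\big\}.
\]
If $b\ge2$, then $K^{(0)},\ldots,K^{(a-1)}$ lie entirely in $V$ (contributing $\binom r2$ each), $K^{(a)}$ meets $V$ in exactly $b$ vertices (contributing $\binom b2$), and all $a$ consecutive pairs among $K^{(0)},\ldots,K^{(a)}$ have their shared edge in $V$; this gives $a\binom r2+\binom b2-a=a(\binom r2-1)+\binom b2$, matching the claim since $\max\{2-b,0\}=0$. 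If instead $b\in\{0,1\}$ the last clique is incomplete: $K^{(0)},\ldots,K^{(a-2)}$ lie in $V$ (here one uses $r\ge4$), $K^{(a-1)}$ meets $V$ in $r-2+b$ vertices, and only $a-1$ consecutive pairs have their shared edge in $V$; hence $e(\Krtn[V])=(a-1)\binom r2+\binom{r-2+b}2-(a-1)$, and expanding $\binom{r-2+b}2$ rewrites this as $a(\binom r2-1)-(2-b)(r-2)$, which is again the claimed value. Equivalently, one may amortise $\binom{r-2}2+2(r-2)=\binom r2-1$ edges to each of the first $a$ cliques and correct for the incomplete final clique, which is the bookkeeping behind the stated formula.

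The only real obstacle is organisational: pinning down the off-by-one counts at the two ends of the chain of cliques. At the origin one must not double-count the single edge $\{0,1\}$ contributed by the wrap-around clique, and at the far end one must split according to whether the final clique is complete ($b\ge2$) or not ($b\le1$); in the incomplete case one must simultaneously record that $K^{(a-1)}$ is missing its last $2-b$ vertices, hence $(2-b)(r-2)$ edges, and that one fewer shared edge lies inside $V$. It is precisely this dichotomy that produces the $\max\{2-b,0\}\cdot(r-2)$ term; everything else is routine manipulation of binomial coefficients.
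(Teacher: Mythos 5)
Your proof is correct and takes essentially the same approach as the paper's: after normalising to $V=[0,v-1]$, count edges clique by clique, subtract the shared edges of consecutive cliques, and split into the cases $b\ge 2$ and $b\in\{0,1\}$ to handle the possibly-incomplete final clique. Your inclusion--exclusion phrasing and explicit ruling-out of the wrap-around clique are just a more verbose rendering of the amortisation bookkeeping the paper compresses into a single parenthetical remark.
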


Instead of using \eqref{eq:max-subgraph}, we will make use of the following estimate to streamline our calculations.

\begin{proposition}\label{prop:easy-bound}
Let $r\ge 4$ and $v\in \NN$ with $v=(r-2)a+b$, where $a,b\in \NN_0$ and $0\le b< r-2$. 
Then,
\begin{equation}\label{eq:easy-bound}
\left(\binom{r}{2}-1\right)a+\binom{b}{2}-\max\{2-b,0\}\cdot(r-2)\le \frac{r+1}{2}v -\frac{r+2}{2}.
\end{equation}
\end{proposition}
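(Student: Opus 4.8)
The plan is to verify \eqref{eq:easy-bound} by a case analysis on the value of $b\in[0,r-3]$, treating separately the two regimes $b\in\{0,1\}$ and $2\le b< r-2$ which already appear in the footnote to \cref{fact:consec_Krsn}. In both cases, after substituting $v=(r-2)a+b$, the claimed inequality becomes a linear inequality in the single variable $a$ (with coefficients depending only on $r$ and $b$), so it suffices to check it at the relevant boundary value of $a$ and confirm that the coefficient of $a$ has the right sign.

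First I would handle the generic case $2\le b< r-2$, where the left-hand side of \eqref{eq:easy-bound} equals $\left(\binom{r}{2}-1\right)a+\binom{b}{2}$ and $\max\{2-b,0\}=0$. Writing everything in terms of $a$, the inequality to prove is
\[
\left(\binom{r}{2}-1\right)a+\binom{b}{2}\le \frac{r+1}{2}\bigl((r-2)a+b\bigr)-\frac{r+2}{2}.
\]
The coefficient of $a$ on the right is $\frac{(r+1)(r-2)}{2}=\frac{r^2-r-2}{2}$, while on the left it is $\binom{r}{2}-1=\frac{r^2-r-2}{2}$; these agree, so the $a$-terms cancel and the inequality reduces to the constant inequality $\binom{b}{2}\le \frac{r+1}{2}b-\frac{r+2}{2}$, i.e. $b^2-b\le (r+1)b-(r+2)$, i.e. $b^2-(r+2)b+(r+2)\le 0$. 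Since $2\le b\le r-3$, one checks this quadratic in $b$ is nonpositive on this range (it is nonpositive precisely between its two roots, and a direct check at $b=2$ gives $4-2(r+2)+(r+2)=-r<0$, while at $b=r-3$ one gets $(r-3)^2-(r+2)(r-3)+(r+2)=-5(r-3)+(r+2)=-4r+17$, which is negative for $r\ge5$; the small cases $r\in\{4\}$ have no admissible $b$ in this subrange, and $r=5$ has only $b=2$, already handled). So this case is routine.

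It remains to treat $b\in\{0,1\}$, where $\max\{2-b,0\}=2-b$ and the left-hand side is $\left(\binom{r}{2}-1\right)a+\binom{b}{2}-(2-b)(r-2)$. Again the coefficient of $a$ matches on both sides after substituting $v=(r-2)a+b$, so the inequality collapses to a constant inequality: for $b=0$ it becomes $-2(r-2)\le -\frac{r+2}{2}$, i.e. $\frac{r+2}{2}\le 2(r-2)$, i.e. $r\ge 6/... $ — concretely $r+2\le 4r-8$, i.e. $r\ge\frac{10}{3}$, true for all $r\ge4$; for $b=1$ it becomes $-(r-2)\le \frac{r+1}{2}-\frac{r+2}{2}=-\frac12$, i.e. $r-2\ge\frac12$, again true for $r\ge4$. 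I do not expect any real obstacle here: the only mild subtlety is making sure the coefficient of $a$ genuinely cancels in every case (which it does, since $\binom{r}{2}-1=\frac{(r+1)(r-2)}{2}$ is exactly the identity that makes the bound tight for segments consisting of whole $(r-2)$-blocks), after which each case is a one-line numerical check valid for all $r\ge4$. I would close the proof by remarking that equality in \eqref{eq:easy-bound} is attained when $b=1$, $a$ arbitrary (so $v\equiv 1\pmod{r-2}$), which is consistent with the degree-sum bound underlying \cref{prop:easy-bound}.
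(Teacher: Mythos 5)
Your proof is correct and follows essentially the same route as the paper: both observe that $\binom{r}{2}-1=\tfrac{(r+1)(r-2)}{2}$ makes the $a$-terms cancel, reducing \eqref{eq:easy-bound} to a constant inequality in $b\in[0,r-3]$, which both then verify at boundary values ($0,1,2,r-3$) — the paper by packaging the two $\max$-branches into one piecewise-concave function $f(b)$, you by splitting off $b\in\{0,1\}$ and then exploiting the quadratic structure on $[2,r-3]$. Two small slips to fix: at $b=2$ you write $4-2(r+2)+(r+2)=-r$, but the correct value is $2-r$ (still negative for $r\ge4$, so the conclusion is unaffected); and the closing remark that equality holds at $b=1$ is false — there the gap RHS $-$ LHS equals $r-\tfrac52>0$ for every $r\ge4$, and indeed \eqref{eq:easy-bound} is strict for all admissible $b$ once $r\ge4$, consistent with the paper's $f(0),f(1),f(2),f(r-3)$ all being strictly positive.
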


\begin{proof}
We can rewrite the LHS of~\eqref{eq:easy-bound} as
\[
\frac{(r+1)(r-2)}{2}a+\binom{b}{2}-\max\{2-b,0\}\cdot(r-2).
\]
By substituting $v=(r-2)a+b$ in the RHS, we see that~\eqref{eq:easy-bound} is equivalent to\COMMENT{We want to prove
\[\frac{(r+1)(r-2)}{2}a+\binom{b}{2}-\max\{2-b,0\}\cdot(r-2)\le\frac{r+1}{2}((r-2)a+b) -\frac{r+2}{2}=\frac{(r+1)(r-2)}{2}a+\frac{r+1}{2}b-\frac{r+2}{2},\]
and the leftmost term cancels out.}
\[\binom{b}{2}-\max\{2-b,0\}\cdot(r-2)\le\frac{r+1}{2}b-\frac{r+2}{2}.\]
To verify this, we consider 
\begin{align*}
    f(b)\coloneqq&\, 2\left(\frac{r+1}{2}b-\frac{r+2}{2}-\left(\binom{b}{2}-\max\{2-b,0\}\cdot(r-2)\right)\right)\\
    =&\,(r+2-b)b-(r+2)+2\max\{2-b,0\}\cdot(r-2).
\end{align*}
For all $b\neq2$ we have $f'(b)=(r+2)-2b-2(r-2)\cdot \mathds{1}_{\{b<2\}}$ and $f''(b)=-2$. 
Since $f$ is concave in $(-\infty,2)$ and $(2,\infty)$, in order to verify that $f(b)\ge 0$ for all $b\in[0,r-3]$ (which is then equivalent to~\eqref{eq:easy-bound}) it suffices to check the value of $f(b)$ at $0$, $1$, $2$ and $r-3$ (assuming this is larger than $2$)\COMMENT{$1$ is necessary for the case $r=4$.}.
Indeed,
\begin{align*}
    f(0)&=-(r+2)+4(r-2)=3r-10> 0,\\
    f(1)&=(r+1)-(r+2)+2(r-2)=2r-5>0,\\
    f(2)&=2r-(r+2)=r-2>  0,\\
\intertext{and, if $r-3> 2$,}
    f(r-3)&=5(r-3)-(r+2)=4r-17>0.\qedhere
\end{align*}
\end{proof}

Our goal now is to establish that the densest subgraphs of $\Krtn$ are precisely those described in \cref{fact:consec_Krsn}.
The next lemma establishes that, among all subgraphs of $\Krtn$ induced by segments of length at most $n/(2r)+1$ (i.e., there is no `wrapping around the cycle'), the densest ones are those where the segment starts in a `new' $K_r$ or ends in a `full' $K_r$.

\begin{lemma}\label{lem:densest-case-segment}
Let $r\ge 4$ and $n\in \NN$ with $(r-2)\mid n$.
Let $V\subseteq V(\Krtn)$ be a segment with $r\le |V|\le n/(2r)+1$.
Then, the number of edges induced by $V$ is 
maximised when $V$ starts in the first vertex of some clique $K_r$ or ends in the last vertex of some clique $K_r$.
\end{lemma}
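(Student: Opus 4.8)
The plan is to fix the length $|V|=v$ and show that, as we slide the segment of length $v$ around the cycle $\Krtn$, the number of induced edges is maximised precisely at a position where $V$ begins at the first vertex of a clique or ends at the last vertex of a clique. Since $v\le n/(2r)+1$, the segment does not wrap around, so it is determined by its starting point, which we may take to be some $p\in[0,r-3]$ relative to the block structure imposed by the cliques (translating by $r-2$ just shifts which cliques are involved, not the edge count). Thus it suffices to compare $e(\Krtn[V])$ for the $r-2$ possible residues $p$ of the starting vertex modulo $r-2$, and to show the maximum is attained at $p=0$ (segment starts a new $K_r$) or, symmetrically, at the residue that makes the segment end at the last vertex of a full $K_r$.

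First I would set up a clean bookkeeping formula for $e(\Krtn[V])$ when $V$ starts at a vertex with residue $p\in\{0,\dots,r-3\}$. Writing $v=(r-2)a+b$ with $0\le b<r-2$, the segment meets some number of cliques; each fully-contained clique contributes $\binom r2$ edges, and the partial cliques at the two ends contribute according to how many of their vertices lie in $V$, with the usual correction that the two ``overlap'' vertices shared by consecutive cliques must not be double-counted. The case $p=0$ is exactly the content of \cref{fact:consec_Krsn}, giving the value on the left-hand side of~\eqref{eq:easy-bound}. For a general shift $p$, the key structural point is that moving the starting point forward by one vertex (within a clique, not across the overlap) removes from $V$ one vertex of the first clique and adds one vertex to the last clique; the change in edge count is therefore $(\text{degree gained at the new last vertex, inside }V)-(\text{degree lost at the old first vertex, inside }V)$. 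Because cliques are so dense, this difference is governed by how many vertices of the respective end-clique are already present, so the edge count is a concave-type function of $p$ whose extremes over $\{0,\dots,r-3\}$ occur at the endpoints of the range — and these endpoints are exactly the ``starts a new $K_r$'' and ``ends a full $K_r$'' configurations.

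Concretely, I would argue that for any placement of $V$, one can always shift $V$ (by at most $r-3$ steps in one of the two directions) to reach a placement that either starts a clique or ends a clique, and that each such shift does not decrease the number of induced edges until such a configuration is reached; equivalently, a placement that neither starts nor ends at a clique boundary has strictly fewer induced edges than the corresponding boundary-aligned placement of the same length. This reduces to a short case analysis on $b$ (the ``leftover'' length modulo $r-2$) comparing the partial contributions $\binom{b}{2}$-type terms against what a boundary alignment yields, mirroring the arithmetic already packaged in \cref{prop:easy-bound}. In fact, once the general-position edge count is written down, subtracting it from the aligned count of \cref{fact:consec_Krsn} leaves an expression in $r$, $b$ and $p$ that is manifestly nonnegative by the same concavity/endpoint reasoning used to prove~\eqref{eq:easy-bound}.

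\textbf{Main obstacle.} The delicate part is the double-counting at clique overlaps: each pair of consecutive cliques shares exactly two vertices (the heavy vertices), and whether those two shared vertices lie inside $V$ changes the contribution of a ``partial'' end-clique in a way that depends on $b$ and on the residue $p$. Getting the correction term $\max\{2-b,0\}\cdot(r-2)$ right for every shift — and checking that the small residues $b\in\{0,1,2\}$ behave as claimed (these are the cases that forced the separate checks $f(0),f(1),f(2)$ in the proof of \cref{prop:easy-bound}) — is where care is needed. Everything else is a monotonicity argument in a single discrete variable; the bound $|V|\le n/(2r)+1$ is used only to guarantee no wrap-around, so that the ``first vertex of a clique'' / ``last vertex of a clique'' description is unambiguous.
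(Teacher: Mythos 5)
Your proposal is at heart the paper's own shifting argument: you note that a one-step shift changes the edge count by (degree gained at the new endpoint) minus (degree lost at the old endpoint), and that for a non-boundary segment one should be able to shift without decreasing the edge count. The paper's proof is shorter and cleaner: it fixes an edge-maximising segment, lets $j_1$ and $j_2$ be the numbers of $V$-vertices in the cliques at the two ends, takes WLOG $j_1\le j_2$, and observes that replacing the first vertex $i_1$ by $i_2+1$ removes $j_1-1$ edges and adds $j_2>j_1-1$ edges, an immediate contradiction. No explicit $p$-parametrized formula and no global monotonicity claim over the whole block is needed; the WLOG choice of direction replaces both. One substantive slip in your write-up: you call the edge count a `concave-type function of $p$ whose extremes over $\{0,\dots,r-3\}$ occur at the endpoints', but for the \emph{maximum} to sit at the endpoints you would need convexity, not concavity (and in fact the increments $j_2-(j_1-1)$ increase as $p$ increases within a block, so the function is convex there). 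That sign slip, together with the deferred `short case analysis on $b$', is precisely the kind of bookkeeping the paper's one-step exchange sidesteps.
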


\begin{proof}
Let $V$ be a segment which induces the maximum possible number of edges from $\Krtn$.
By the symmetries of $\Krtn$, we may assume that $V\cap([0,r-1]\cup[n-r,n-1])=\varnothing$\COMMENT{This assumption is in place so there are no issues when talking about last and first cliques.}.
Assume that $V$ is not of the form described in the claim (i.e., it neither begins in the first vertex nor ends in the last vertex of some clique $K_r$).
Let $i_1$ and $i_2$ be the first and last vertices of $V$, and let $j_1$ and $j_2$ be the number of vertices which $V$ contains in the (last) clique $K_r$ which contains $i_1$ and in the (first) clique which contains $i_2$, respectively.
We may assume, without loss of generality, that $j_1\le j_2$ (and recall that $j_1,j_2<r$).
However, then the set $(V\setminus\{i_1\})\cup\{i_2+1\}=[i_1+1,i_2+1]$ induces more edges than $V$\COMMENT{We remove $j_1-1$ edges from one side and add $j_2$ edges to the other.}.
But this contradicts our choice of $V$ as a set of consecutive vertices which induces the maximum possible number of edges.
\end{proof}

Now we prove that no subgraph of $\Krtn$ is denser than the subgraphs induced by segments.

\begin{lemma}\label{lem:densest-case-general}
Let $r\ge 4$ and $n\in \NN$ with $(r-2)\mid n$.
Let $V\subseteq V(\Krtn)$ with $r\le v\coloneqq|V|\le n/(2r)+1$.
Then, the number of edges induced by $V$ is at most 
the number induced by a segment of length $v$.
\end{lemma}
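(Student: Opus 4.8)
The plan is to show that any vertex set $V$ with $r \le |V| = v \le n/(2r)+1$ induces at most as many edges as a segment of length $v$, by a ``compression'' or ``defragmentation'' argument that merges the intervals of $V$ into a single segment without losing edges. First I would decompose $V$ into its maximal runs of consecutive vertices (with respect to the cyclic order on $\ZZ_n$), say $V = W_1 \dcup W_2 \dcup \cdots \dcup W_m$, where each $W_i$ is a segment and consecutive $W_i$'s are separated by at least one vertex not in $V$. Since $v \le n/(2r)+1$, there is plenty of empty space on the cycle, so there is no wrap-around obstruction. If $m = 1$ we are already done, so assume $m \ge 2$.

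The key step is to bound the number of edges of $\Krtn[V]$ that are \emph{not} induced within a single run. Because $\Delta(\Krtn) \le 2r-3$ and, more importantly, because any edge of $\Krtn$ joins two vertices at (cyclic) distance at most $r-1$, an edge between two different runs $W_i, W_j$ forces the gap between them to have length at most $r-2$; moreover all such ``crossing'' edges at a given gap live inside one clique $K_r$, so there are at most $\binom{r}{2} - 1 < \binom{r}{2}$ of them per gap and each such gap involves vertices near the two endpoints of the adjacent runs. The clean way to organize this: I would use \cref{fact:consec_Krsn} and \cref{prop:easy-bound}, which give $e(\Krtn[S]) \le \frac{r+1}{2}|S| - \frac{r+2}{2}$ for any segment $S$ (starting at a clique boundary, but by monotonicity this upper bound holds for every segment). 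Applying this to each run $W_i$ and summing,
\[
e(\Krtn[V]) \le \sum_{i=1}^m e(\Krtn[W_i]) + (\text{crossing edges}) \le \sum_{i=1}^m \left(\frac{r+1}{2}|W_i| - \frac{r+2}{2}\right) + (\text{crossing edges}).
\]
It then remains to show the number of crossing edges is at most $(m-1)\cdot\frac{r+2}{2}$ — actually one needs a bit more care, since a segment of length $v$ achieves roughly $\frac{r+1}{2}v - \frac{r+2}{2}$, so one ``saves'' $\frac{r+2}{2}$ per extra run and must check the crossing edges never exceed that saving. Each gap of length $g$ (with $1 \le g \le r-2$, else no crossing edges) contributes crossing edges only from within the at most one clique spanning it; a short direct count shows this contributes at most $\binom{r}{2} - \binom{g+ \text{(stuff)}}{2}$-type bounds, and in all cases this is dominated by the $\frac{r+2}{2}$ per run that is freed up plus the edges one would additionally gain by sliding $W_{i+1}$ over to abut $W_i$. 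The honest version of the argument is the exchange argument of \cref{lem:densest-case-segment}: take $V$ maximising $e(\Krtn[V])$ among sets of size $v$; if $V$ is disconnected, pick the run with the fewest clique-vertices at an exposed end and slide it one step toward a neighbouring run, which does not decrease the edge count; iterate until $V$ is a single segment.

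I expect the main obstacle to be the bookkeeping of crossing edges between runs when gaps are short (length $\le r-2$): in that regime the two runs and the gap can all sit inside overlapping cliques, so the naive ``each run is independent'' estimate loses too much, and one genuinely needs the exchange/sliding argument rather than a one-shot inequality. The sliding argument itself is clean but requires verifying that sliding a run across a short gap never destroys more edges than it creates; this is exactly the local computation ``remove $j_1 - 1$ edges, add $j_2$ edges with $j_1 \le j_2$'' from the proof of \cref{lem:densest-case-segment}, suitably adapted to the multi-run setting, together with an argument that the process terminates (e.g. the number of runs or the total gap length is a monotone decreasing potential). Once $V$ is a single segment, \cref{lem:densest-case-segment} (or directly \cref{fact:consec_Krsn}) finishes the proof.
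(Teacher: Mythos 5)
Your plan is on the right track at the high level---the paper's proof of \cref{lem:densest-case-general} is indeed a compression argument that starts from an edge-maximising $V$ and performs exchanges until $V$ becomes a segment, and you correctly diagnose that the naive ``run-by-run plus crossing edges'' one-shot bound is too lossy when gaps have length at most $r-2$. Where your proposal falls short is in the crucial claim that a \emph{single-step} slide (``slide it one step toward a neighbouring run'') never decreases the edge count. That claim is false, and the paper's proof is organised precisely to get around it. After the preliminary reduction (WLOG $\deg_I(i_1)\ge\deg_I(i_2)$ and $i_1$ is the first vertex of a full clique $K^{(1)}\subseteq I$; otherwise swap $i_2$ for a missing vertex of the clique through $i_1$ and gain edges), the paper takes $i'$ to be the first vertex of the gap and distinguishes whether $i'$ is heavy or light. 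In the \emph{light} case one cannot move just one vertex: if $K$ is the unique clique through $i'$ and $t\coloneqq|V(K)\setminus V|$, then replacing only $i_1$ by $i'$ loses $r-1$ edges but gains only $r-t\le r-2$ of them. The paper instead moves the first $t$ vertices of $K^{(1)}$ into $V(K)\setminus V$ simultaneously; the loss is $\sum_{i=1}^{t}(r-i)$ and the gain is at least $\sum_{i=1}^{t}\bigl((r-t)+(i-1)\bigr)=\sum_{i=1}^{t}(r-i)$, which balances exactly. So the number of vertices moved in one ``step'' must be allowed to go up to $r-2$, and which vertices get moved (the first $t$ of the \emph{first full clique}, into the \emph{first gap}) is not what a local ``slide the run with the fewest clique-vertices at an exposed end by one'' rule produces. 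The heavy case has its own two subcases (depending on whether the right clique $K''$ through $i'$ meets $E(I)$), one of which is a shift that preserves edges but shortens the containing segment rather than a gain.

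So: same overall strategy as the paper, but your specific exchange rule is not correct as stated, and the delicate part you flagged (``the bookkeeping of crossing edges between runs when gaps are short'') is exactly the part left unresolved. To fix it, (i) fix the WLOG at the outset so the first vertex of $V$ starts a full clique of $I$; (ii) always operate at the first gap vertex $i'$; (iii) split on heavy/light, and in the light case transfer $t=|V(K)\setminus V|$ vertices (not one) from $K^{(1)}$ into $K$.
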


\begin{proof} 
Let $V\subseteq V(\Krtn)$ be a set of cardinality $v$ inducing the maximum possible number of edges.
Let $I$ be the graph induced by $V$.
Of course, we may assume $I$ has no isolated vertices.
Let $S$ be a smallest segment containing $V$.
By the symmetries of $\Krtn$, we may assume that $S\cap([0,r-1]\cup[n-r,n-1])=\varnothing$.
We use a compression-type argument to show that we can modify $V$ into a segment which induces at least as many edges as $V$.
We achieve this by consecutively creating new sets $V'$ which are contained in shorter segments but induce at least as many edges as the previous set.

Let $i_1$ and $i_2$ be the first and last vertices of $V$ (i.e., $S=[i_1,i_2]$) and notice that $i_1$ and $i_2$ do not form an edge (since $v\le n/(2r)+1$).
Observe, then, that $\deg_I(i_1),\deg_I(i_2)\in[r-1]$.
By an argument as in \cref{lem:densest-case-segment}, we may assume, without loss of generality, that $\deg_I(i_1)\ge \deg_I(i_2)$ and that $i_1$ is the first vertex of some clique $K_r$ completely contained in $I$:
otherwise, we could replace the vertex $i_2$ with some missing vertex from such a clique and increase the number of edges\COMMENT{Assuming that $\deg_I(i_1)\ge \deg_I(i_2)$ can be done by symmetry.
Now, assume $i_1$ is not the first vertex of a `full' copy of $K_r$.
By deleting $i_2$, we loose $\deg_I(i_2)$ edges. By then adding a new vertex in the clique containing $i_1$, since this clique must already contain $\deg_I(i_1)+1$ vertices, we gain $\deg_I(i_1)+1>\deg_I(i_2)$ new edges, so the total number of edges goes up.
But this contradicts the assumption that $I$ has the maximum possible number of edges.}.

Let $K^{(1)}$ be the copy of  $K_r$ contained in $I$ with the smallest indices (in particular, it contains $i_1$). 
Assume that $V$ is not a segment.
Consider the vertex $i'\in S\setminus V$ with the smallest index.
Observe that $i'\notin V(K^{(1)})$.
We distinguish two cases, depending on whether $i'$ is heavy or light.

If $i'$ is heavy, let $K'$ and $K''$ be the two copies of $K_r$ from $\Krtn$ with $i'\in V(K')\cap V(K'')$ and $K'$ containing smaller indices than $K''$.
If $E(K'')\cap E(I)=\varnothing$, then we can shift all edges of $I$ induced by $V\cap[i'+1,i_2]$ to the left $r$ positions, yielding a graph contained in a segment of length $i_2-i_1+1-r$ with the same number of edges as $I$.
Hence, we assume that $E(K'')\cap E(I)\neq\varnothing$, which implies $V$ contains at least two of the vertices of $K''$. 
Observe that $K^{(1)}\neq K'$.
Then, replace $i_1$ by $i'$.
In this way, since $i_1$ is the first vertex from $K^{(1)}$, we remove $r-1$ edges, but at the same time we add at least $r$ edges\COMMENT{All vertices in $K'$ before $i'$ must be in the graph, so at least $r-2$ in $V(K')\setminus V(K'')$. But also, since $|V\cap V(K'')|\geq2$, we are adding at least two more edges.\\
We could ignore the fact that $i'\notin K^{(1)}$ and still obtain the desired result (we would replace $e-1$ edges by $r-1$ new edges and obtain a shorter segment).}.
But this contradicts our choice of $V$.

Assume now that $i'$ is light and let $K$ be the unique clique $K_r$ from $\Krsn$ with $i'\in V(K)$.
Since $i'$ is light, by its definition we must have $|V(K)\setminus V|\le r-2$\COMMENT{There have to be at least two heavy vertices in $K$ which lie in $V$ (the last two with indices below $i'$).}.
We replace the first $t\coloneqq |V(K)\setminus V|\le r-2$ vertices from $K^{(1)}$ by adding $V(K)\setminus V$ to $V$.
In this way, we remove $\sum_{i=1}^{t} (r-i)$ edges, but at the same time we add at least $\sum_{i=1}^{t} (r-i)$ edges.
Since $i_2>i'$, this means the new set lies in a shorter segment but induces at least as many edges.

In all described situations we managed to make the segment containing $V$ shorter while not decreasing the number of edges.
Hence, we eventually find a segment $V'$ inducing at least as many edges as $V$.
\end{proof}

The following now follows directly by combining \cref{fact:consec_Krsn,prop:easy-bound,lem:densest-case-segment,lem:densest-case-general} (and noting that the bound holds trivially if $|V|\leq r$).

\begin{corollary}\label{coro:Krdensity_bound}
Let $r\ge 4$ and $n\in \NN$ with $(r-2)\mid n$.
Let $V\subseteq V(\Krtn)$ with $|V|\le n/(2r)+1$.
Then,
\[e(\Krtn[V])\leq \frac{r+1}{2}|V|-\frac{r+2}{2}.\]
\end{corollary}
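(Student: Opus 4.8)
The plan is to assemble \Cref{coro:Krdensity_bound} purely as a bookkeeping combination of the four results just proved, treating the degenerate small cases separately. First I would dispose of the trivial range: if $|V|\le r$, then $e(\Krtn[V])\le\binom{|V|}{2}$, and one checks directly that $\binom{v}{2}\le\frac{r+1}{2}v-\frac{r+2}{2}$ for all $v\le r$ (equivalently $v^2-(r+2)v+(r+2)\le 0$, whose larger root exceeds $r$, so the inequality holds on the whole interval $[1,r]$; the endpoints $v=1$ and $v=r$ can be verified by hand). So from now on assume $r\le|V|\le n/(2r)+1$.

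Next, by \Cref{lem:densest-case-general}, the number of edges induced by $V$ is at most the number of edges induced by a \emph{segment} of length $v\coloneqq|V|$. By \Cref{lem:densest-case-segment}, among segments of length $v$ the number of induced edges is maximised by a segment that starts in the first vertex of some clique $K_r$ (the case where it ends in the last vertex of a clique is, by the reflective symmetry of $\Krtn$, the same count). For such a segment we are exactly in the situation of \Cref{fact:consec_Krsn}: writing $v=(r-2)a+b$ with $0\le b<r-2$, the number of induced edges equals
\[
\left(\binom{r}{2}-1\right)a+\binom{b}{2}-\max\{2-b,0\}\cdot(r-2).
\]
Finally, \Cref{prop:easy-bound} bounds this quantity above by $\frac{r+1}{2}v-\frac{r+2}{2}$, which is precisely the claimed inequality. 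Chaining the three inequalities (general $\le$ segment $\le$ clique-aligned segment $=$ the \Cref{fact:consec_Krsn} expression $\le$ the \Cref{prop:easy-bound} bound) completes the proof.

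There is essentially no obstacle here, since all the real work has been done in the preceding lemmas; the only point requiring a little care is making sure the hypotheses line up. In particular, \Cref{lem:densest-case-segment,lem:densest-case-general} both require $|V|\le n/(2r)+1$, which matches the hypothesis of the corollary, and they require $|V|\ge r$, which is why the case $|V|\le r$ must be handled by the elementary computation above; \Cref{fact:consec_Krsn} also needs $|V|\le n/(2r)+1$ so that the segment does not wrap around, and \Cref{prop:easy-bound} holds for every $v\in\NN$ with no upper restriction. One should also note that \Cref{fact:consec_Krsn} as stated assumes the segment starts in the first vertex of a clique, which is exactly the configuration handed to us by \Cref{lem:densest-case-segment}; the ``ends in the last vertex of a clique'' alternative is reduced to this by reversing the cyclic order. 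With these checks in place, the combination is immediate.
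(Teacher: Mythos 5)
Your proof mirrors the paper's argument essentially verbatim: the paper also assembles the corollary by chaining \cref{lem:densest-case-general}, \cref{lem:densest-case-segment}, \cref{fact:consec_Krsn} and \cref{prop:easy-bound}, dismissing $|V|\le r$ as trivial. One arithmetic slip, however: at $v=1$ the quadratic $v^2-(r+2)v+(r+2)$ evaluates to $1>0$, so the inequality you claim ``holds on the whole interval $[1,r]$'' in fact fails at $v=1$ (and $v=0$); knowing the larger root exceeds $r$ is not enough --- you also need the smaller root $\frac{(r+2)-\sqrt{r^2-4}}{2}$ to lie below $1$, and it does not (it lies strictly between $1$ and $2$ for every $r\ge4$). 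This is really a defect in the corollary's own statement, which is false for $|V|\le1$ and which the paper also glosses over; it is harmless because every application of the corollary takes $V$ to span a component with at least one edge, hence $|V|\ge2$, and for $v\in[2,r]$ the check does go through. Still, you should not assert that $v=1$ ``can be verified by hand'' when it cannot; either restrict the trivial case to $2\le|V|\le r$, or note explicitly that the bound is vacuous/irrelevant for $|V|\le1$.
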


Using what we have proved so far, we can obtain estimates which will be crucial for studying the spreadness of $\cCrtn$.

\begin{lemma}\label{lemma:small-spread-new}
Let $r\ge 4$ and $n\in \NN$ with $(r-2)\mid n$.
Let $I\subseteq\Krtn$ be a subgraph with $\ell\le n/(2r)$ edges and $c$ components.
Then,
\[|V(I)|-c\ge \frac{2}{r+1}\ell+\frac{c}{r+1}.\]
\end{lemma}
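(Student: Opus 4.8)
The plan is to reduce this to the density bound in \cref{coro:Krdensity_bound}, applied componentwise. Let $I_1,\ldots,I_c$ be the components of $I$ that contain at least one edge, and write $v_j\coloneqq|V(I_j)|$ and $\ell_j\coloneqq e(I_j)$, so that $|V(I)|=\sum_{j=1}^c v_j$ and $\ell=\sum_{j=1}^c\ell_j$. For each fixed $j$, the component $I_j$ is a connected subgraph of $\Krtn$ with $\ell_j\le\ell\le n/(2r)$ edges; since $I_j$ is connected on $v_j$ vertices we have $v_j\le\ell_j+1\le n/(2r)+1$, so $V(I_j)$ meets the hypotheses of \cref{coro:Krdensity_bound} (a connected subgraph on $v_j$ vertices fits inside a segment of length at most... actually we only need the vertex-count bound to invoke the corollary on $V(I_j)$ directly). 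Hence $\ell_j=e(\Krtn[V(I_j)])\le\frac{r+1}{2}v_j-\frac{r+2}{2}$ — wait, I should be careful: $e(\Krtn[V(I_j)])\ge\ell_j$ but may be larger, and the corollary bounds the former, which is exactly what I want: $\ell_j\le e(\Krtn[V(I_j)])\le\frac{r+1}{2}v_j-\frac{r+2}{2}$.

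Rearranging the per-component inequality gives $v_j\ge\frac{2}{r+1}\ell_j+\frac{r+2}{r+1}$, i.e. $v_j-1\ge\frac{2}{r+1}\ell_j+\frac{1}{r+1}$. Summing over $j\in[c]$ yields
\[
|V(I)|-c=\sum_{j=1}^c(v_j-1)\ge\frac{2}{r+1}\sum_{j=1}^c\ell_j+\frac{c}{r+1}=\frac{2}{r+1}\ell+\frac{c}{r+1},
\]
which is exactly the claimed bound. (Components of $I$ without edges contribute $0$ to $\ell$ and are not counted among the $c$ components with at least one edge; if one prefers to count isolated vertices as components this only helps, since each such component adds $1$ to $|V(I)|$ and to $c$ and $1\ge\frac{1}{r+1}$.)

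The only genuine point to verify carefully is that \cref{coro:Krdensity_bound} is indeed applicable to each $V(I_j)$: the corollary requires $|V(I_j)|\le n/(2r)+1$, and this follows since $I_j$ is connected, whence $v_j\le\ell_j+1\le n/(2r)+1$. No other obstacle arises — the argument is a one-line summation once the corollary is in hand, and the real content has already been done in \cref{lem:densest-case-segment,lem:densest-case-general,prop:easy-bound}. I would therefore present this lemma immediately after \cref{coro:Krdensity_bound} with a short proof of the above three steps: (i) reduce to components, (ii) apply the corollary to each, (iii) sum.
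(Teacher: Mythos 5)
Your proof is correct and follows essentially the same route as the paper's: decompose $I$ into the $c$ edge-containing components, apply \cref{coro:Krdensity_bound} to each component's vertex set, and sum. The only difference is that you spell out the small detail the paper leaves implicit (that connectivity of $I_j$ together with $\ell_j\le n/(2r)$ gives $v_j\le\ell_j+1\le n/(2r)+1$, so the corollary's hypothesis is met), which is a harmless addition.
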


\begin{proof}
Let $I_1,\ldots,I_c$ be the components of $I$ with at least one edge, and let $v_1,\ldots,v_c$ be the number of vertices spanned by $I_1,\ldots,I_c$, respectively. 
For each $j\in[c]$, since $|I_j|\le|I|\le n/(2r)$, by \cref{coro:Krdensity_bound} we have the following easy bound:
\[|I_j|\le \frac{r+1}{2}v_j-\frac{r+2}{2}.\]
Summing over all $j\in[c]$, we obtain that 
\[\ell=|I|\le \frac{r+1}{2}|V(I)| -\frac{r+2}{2}c=\frac{r+1}{2}\left(|V(I)|-c\right)-\frac{c}{2},\]
and the claim follows by reordering.
\end{proof}

\begin{lemma}\label{lemma:large-spread-new}
Let $r\ge 4$ and $n\in \NN$ with $(r-2)\mid n$.
Let $I\subseteq\Krtn$ be a subgraph with $\ell$ edges and $c$ components.
Then,
\[|V(I)|\ge \frac{2}{r+1}\ell.\]
\end{lemma}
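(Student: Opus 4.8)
The plan is to prove the global bound $|V(I)|\ge \frac{2}{r+1}\ell$ for an \emph{arbitrary} subgraph $I\subseteq\Krtn$, with no restriction on the number of edges, by reducing it to the local density estimate already available in \cref{coro:Krdensity_bound}. The natural strategy is to decompose $I$ into pieces that are short enough (as segments) that no `wrapping around the cycle' occurs, apply the per-segment bound to each piece, and sum up. Concretely, I would first reduce to the connected case: if $I_1,\ldots,I_c$ are the components of $I$ with at least one edge, and the bound holds for each $I_j$ with $\ell_j$ edges and $v_j$ vertices, then summing $v_j\ge \frac{2}{r+1}\ell_j$ over $j$ gives $|V(I)|=\sum_j v_j\ge \frac{2}{r+1}\ell$, exactly as in the proof of \cref{lemma:large-spread-new}'s sibling lemma \cref{obs:large-edge-spread}. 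So it suffices to handle a single connected subgraph $I'$ of $\Krtn$.

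For a connected $I'$, the vertex set $V(I')$ is contained in a unique shortest segment $S$ (up to the cyclic symmetry; if $V(I')$ happens to be all of $\ZZ_n$ one treats this degenerate case separately, but then $|V(I')|=n$ is far larger than $\frac{2}{r+1}\ell$ since $\ell\le e(\Krtn)=\frac{r+1}{2}n$ would be needed and equality is essentially the whole graph). If $|S|\le n/(2r)+1$, then \cref{coro:Krdensity_bound} applied to $S$ gives $\ell\le e(\Krtn[S])\le \frac{r+1}{2}|S|-\frac{r+2}{2}$, and since $|V(I')|\le |S|$ is too weak in the wrong direction, I instead note that $I'$ is connected and contained in $S$, so actually I should bound $e(I')$ directly: $e(I')\le e(\Krtn[V(I')])$, and the point is that I want a density bound in terms of $|V(I')|$, not $|S|$. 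The cleanest route is therefore to partition $V(I')$ itself (or rather the edge set $E(I')$) into blocks of bounded length: cover $\ZZ_n$ by $O(r)$ arcs each of length at most $n/(2r)$, say $A_1,\ldots,A_m$ with $m=\lceil 2r\rceil$ roughly, apply \cref{coro:Krdensity_bound} to each $\Krtn[V(I')\cap A_i]$, and sum. Each application yields $e(\Krtn[V(I')\cap A_i])\le \frac{r+1}{2}|V(I')\cap A_i|$, and since every edge of $I'$ lies inside some $\Krtn[A_i]$ (here one must choose the arcs so that every clique $K_r$ of $\Krtn$ is fully inside at least one arc — this is arranged by taking overlapping arcs), summing over $i$ gives $\ell=e(I')\le \frac{r+1}{2}\sum_i|V(I')\cap A_i|$. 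The overlaps inflate the right-hand side by at most a bounded multiplicative factor, which is not good enough on its own, so one has to be careful.

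A cleaner approach, and the one I expect the authors take, is to avoid partitioning altogether and instead argue that if $|V(I)|<\frac{2}{r+1}\ell$ then $I$ contains a connected subgraph sitting inside a segment of length $\le n/(2r)+1$ that already violates \cref{coro:Krdensity_bound}; in other words, by an averaging or greedy argument one extracts a short, dense sub-piece and derives a contradiction with \cref{coro:Krdensity_bound}. Alternatively, one observes that any $I$ with too few vertices relative to its edges must have a component whose vertex set spans a segment of length at most $n/(2r)+1$ (because $\ell/|V(I)|>1$ forces the edges to be concentrated, hence $|V(I)|$ small, hence the shortest containing segment short), and then \cref{lemma:small-spread-new} applies to each component directly with the additional slack. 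In fact the most likely argument is simply: if every component $I_j$ has $\ell_j\le n/(2r)$ edges, apply \cref{lemma:small-spread-new} componentwise and sum, obtaining the stronger bound $|V(I)|-c\ge\frac{2}{r+1}\ell+\frac{c}{r+1}$, which implies the claim; and if some component $I_j$ has $\ell_j>n/(2r)$ edges, then that component alone already has $v_j\ge$ a large fraction of $n$, while $\ell\le e(\Krtn)=\frac{r+1}{2}n$, so $|V(I)|\ge v_j\ge\frac{2}{r+1}\ell$ holds with room to spare. The main obstacle is this large-component case: one needs a clean lower bound on $v_j$ when $\ell_j$ is a constant fraction of $n$, using that $\Krtn$ has bounded `density' $\frac{r+1}{2}$ globally — i.e. $e(\Krtn)=\frac{r+1}{2}n$ from \cref{fact:prop_Krsn}, so $\ell\le\frac{r+1}{2}n$ always, and hence $\frac{2}{r+1}\ell\le n$, meaning the inequality $|V(I)|\ge\frac{2}{r+1}\ell$ can only fail if $|V(I)|<n$, i.e. $I$ misses a vertex, which combined with connectivity of the dense component and the structure of $\Krtn$ forces that component into a short segment — contradiction with the first case. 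I would write the proof by first disposing of the case where all components are short via \cref{lemma:small-spread-new}, and then handling a long component by the crude global count, which is the step requiring the most care.
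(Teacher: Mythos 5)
Your first case (every component has at most $n/(2r)$ edges) is sound: apply \cref{coro:Krdensity_bound} to each component $I_j$ (which is connected, so $v_j\le \ell_j+1\le n/(2r)+1$), obtain $v_j\ge\frac{2}{r+1}\ell_j+\frac{r+2}{r+1}$, and sum. But your second case contains a genuine gap. When some component $I_j$ has $\ell_j>n/(2r)$ edges, you assert that $v_j$ is ``a large fraction of $n$'' and that therefore $|V(I)|\ge v_j\ge\frac{2}{r+1}\ell$ ``with room to spare.'' This does not follow. The only elementary lower bound available for a connected component is the degree bound $v_j\ge 2\ell_j/\Delta(\Krtn)=2\ell_j/(2r-3)$, which for $r>4$ is strictly weaker than the $\frac{2}{r+1}\ell_j$ you would need even for that one component; and since $\ell$ itself may be a constant fraction of $n$ (as large as $\frac{r+1}{2}n$), the target $\frac{2}{r+1}\ell$ is also a constant fraction of $n$, so ``a large fraction of $n$'' on the left-hand side is not a conclusion. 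Your arc-covering fallback runs into exactly the overlap penalty you flag yourself, and you do not close it either. So as written the argument breaks precisely on the long-segment case, which is the whole content of the lemma beyond \cref{lemma:small-spread-new}.

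The paper does something different and avoids the dichotomy altogether. It partitions $V(\Krtn)$ cyclically into $t=n/(r-2)$ blocks of heavy vertices of length $2$ alternating with $t$ blocks of light vertices of length $r-4$. Writing $h_i$ and $\ell_i$ for the numbers of vertices of $I$ in the $i$-th heavy and light block (with $h_{t+1}=h_1$), every edge of $\Krtn$ lies inside a block or between cyclically adjacent blocks, giving
\[
e(I)\le \sum_{i=1}^t \left(\binom{h_i}{2}+\binom{\ell_i}{2}+h_i\ell_i+h_{i+1}\ell_i+h_{i+1}h_i\right)\le \sum_{i=1}^t \left(\binom{h_i+\ell_i}{2}+2(h_i+\ell_i)\right),
\]
and then each summand equals $(h_i+\ell_i)\bigl(\tfrac{h_i+\ell_i-1}{2}+2\bigr)\le\tfrac{r+1}{2}(h_i+\ell_i)$ since $h_i+\ell_i\le r-2$. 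Summing over $i$ yields $\ell\le\tfrac{r+1}{2}|V(I)|$ for \emph{all} $I$, with no case distinction, no invocation of \cref{coro:Krdensity_bound}, and no wraparound or overlap loss, because the blocks genuinely partition $V(\Krtn)$. The lesson for your attempt: when a per-segment density bound refuses to globalise because of wraparound or overlapping covers, switch to a per-block count over a true partition of the host graph; here the heavy/light block structure of $\Krtn$ supplies exactly the right partition.
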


\begin{proof}
The vertex set of $\Krtn$ consists of $t\coloneqq n/(r-2)$ segments of heavy vertices and $t$ segments of light vertices, which alternate as we traverse the vertex set.
The segments of heavy vertices have length $2$, and the segments of light vertices have length $r-4$ (the case $r=4$ is special: here the segments of light vertices are empty).
For each $i\in[t]$, let $h_i$ and $\ell_i$ denote the number of heavy vertices and light vertices of $I$ in the $i$-th segment of heavy or light vertices, respectively.
For notational purposes, let $h_{t+1}\coloneqq h_1$.
Then, we can bound the number of edges of $I$ as follows:
\begin{align*}
|I|&\le \sum_{i=1}^t \left(\binom{h_i}{2}+\binom{\ell_i}{2}+h_i\ell_i+h_{i+1}\ell_i+h_{i+1}h_i\right)\\
&\le \sum_{i=1}^t \left(\binom{h_i}{2}+\binom{\ell_i}{2}+h_i\ell_i+2\ell_i+2h_i\right)=\sum_{i=1}^t \left(\binom{h_i+\ell_i}{2}+2(\ell_i+h_i)\right).
\end{align*}
Next, observe that, for each $i\in[t]$,
\[\binom{h_i+\ell_i}{2}+2(\ell_i+h_i)=(h_i+\ell_i)\left(\frac{h_i+\ell_i-1}{2}+2\right)\le \frac{r+1}{2}(h_i+\ell_i),\]
where the inequality holds since $h_i+\ell_i\le r-2$.
The conclusion follows by adding over all $i\in[t]$.
\end{proof}

Combining the previous two lemmas, we show that $\cC_{r,2,n}$ is a $(O(n^{-2/(r+1)}),1/(r+1),1/(r(r+1)))$-superspread hypergraph.

\begin{lemma}\label{lem:spread}
Let $r\ge 4$ and $n\in \NN$ with $(r-2)\mid n$.
Then, the hypergraph $\cC_{r,2,n}$ of all copies of $\Krtn$ in $M=\binom{[n]}{2}$ is  $(O(n^{-2/(r+1)}),1/(r+1),1/(r(r+1)))$-superspread.
\end{lemma}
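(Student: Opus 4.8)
The plan is to mimic exactly the structure of the proof of \cref{obs:general-spread} in the $C_4$ case. First I would establish an analogue of the counting bound \eqref{equa:bound4C4}: given a subgraph $I\subseteq M$ which is contained in some copy of $\Krtn$, with components $I_1,\ldots,I_c$ of positive size spanning $v_j\coloneqq|V(I_j)|$ vertices, I want to bound the number of orderings of $[n]$ which define a copy of $\Krtn$ containing $I$, and then divide by the number $2n/(r-2)\cdot((r-4)!)^{\cdot}((2)!)^{\cdot}$ of orderings that yield the same copy (as recorded in \cref{fact:number-cCrsn}). The counting goes: pick a root vertex $x_j\in V(I_j)$ for each component (at most $\prod_j v_j\le c_r^{|I|}$ choices for a suitable constant $c_r$, using $\Delta(\Krtn)\le 2r-3$ and that each component has at least as many edges as vertices minus one); then count the orderings $\tau$ of the roots together with all isolated vertices of $I$, which is $(n-|V(I)|+c)!$; and then count the ways to `insert' the remaining vertices of each $I_j$ consistently with its structure, which is at most $\prod_j (\Delta+1)^{v_j-1}\le (2r-2)^{|I|}$ since we can order the vertices of $I_j$ so each new one has a smaller-indexed neighbour and thus at most $\Delta$ possible slots. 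Combining, I get $|\cC_{r,2,n}\cap\langle I\rangle|\le C_r^{|I|}(n-|V(I)|+c-1)!\cdot d_{r,2}^{-n}$ for an absolute constant $C_r$, or rather — being careful with the $d_{r,2}^n$ factor from \cref{fact:number-cCrsn} — an inequality of the shape $|\cC_{r,2,n}\cap\langle I\rangle|\le C_r^{|I|}\, (n-|V(I)|+c-1)!\, /\,((n-1)!\,d_{r,2}^{n})\cdot|\cC_{r,2,n}|$.

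Next I would split into the two regimes exactly as in \cref{obs:general-spread}. For $|I|\le n/(2r)=k_0/(r(r+1))$ (so $\delta=1/(r(r+1))$ with $k_0=(r+1)n/2$ the number of edges of $\Krtn$), substitute the bound $|V(I)|-c\ge \frac{2}{r+1}\ell+\frac{c}{r+1}$ from \cref{lemma:small-spread-new} into the factorial estimate; then a Stirling computation — identical in spirit to the \texttt{COMMENT}ed calculation after \cref{obs:general-spread} — shows that the ratio $(n-|V(I)|+c-1)!/((n-1)!\,d_{r,2}^n)$ is at most $(c_r n)^{-\frac{2}{r+1}\ell}\cdot k_0^{-\frac{1}{r+1}c}$ up to constant factors absorbed into $C_r^{|I|}$, because the `$+\frac{c}{r+1}$' term in the vertex bound is precisely what produces the extra $k_0^{-\alpha c}$ with $\alpha=1/(r+1)$. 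This yields $|\cC_{r,2,n}\cap\langle I\rangle|\le (Dn^{-2/(r+1)})^{|I|}\, k_0^{-c/(r+1)}\,|\cC_{r,2,n}|$ for a suitable constant $D=D(r)$, which is the superspreadness inequality. For $|I|>n/(2r)$, use instead \cref{lemma:large-spread-new}, $|V(I)|\ge\frac{2}{r+1}\ell$, drop the component term, and run the same Stirling estimate; here the prefactor $(c_r n)^{\pm\Theta(1)}$ from Stirling and from ignoring the $-c-1$ is harmless since $(c_r n)^{1/|I|}\to1$ as $|I|\ge n/(2r)\to\infty$, giving $|\cC_{r,2,n}\cap\langle I\rangle|\le (Dn^{-2/(r+1)})^{|I|}|\cC_{r,2,n}|$, i.e.\ plain $q$-spreadness with $q=Dn^{-2/(r+1)}$. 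Combining the two regimes gives exactly the $(O(n^{-2/(r+1)}),1/(r+1),1/(r(r+1)))$-superspreadness claimed.

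The main obstacle I anticipate is purely bookkeeping rather than conceptual: tracking the constant $d_{r,2}$ from \cref{fact:number-cCrsn} through Stirling's approximation and verifying that it cancels correctly. Concretely, $|\cC_{r,2,n}|=\frac{r-2}{2}d_{r,2}^n(n-1)!$, so the quotient $|\cC_{r,2,n}\cap\langle I\rangle|/|\cC_{r,2,n}|$ involves $(n-|V(I)|+c-1)!/((n-1)!\,d_{r,2}^n)$, and since $d_{r,2}<1$ the factor $d_{r,2}^{-n}$ is exponentially large in $n$; one must check that the factorial ratio $(n-|V(I)|+c-1)!/(n-1)!\approx n^{-(|V(I)|-c+1)}\cdot e^{|V(I)|-c}$ together with the correct power of $n$ extracted from Stirling beats it — but in fact the clean way is to avoid Stirling on $|\cC_{r,2,n}|$ altogether and instead compare $|\cC_{r,2,n}\cap\langle I\rangle|$ directly against $|\cC_{r,2,n}|$ via the ratio of the two orderings-counts, so that the $d_{r,2}^n$ and the global $2n/(r-2)$ factors cancel by construction and only the `local' factors $C_r^{|I|}$ and the falling factorial $(n-1)(n-2)\cdots(n-|V(I)|+c+1)$ remain. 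With that reorganisation the estimate reduces to showing $C_r^{|I|}\le \prod_{j=1}^{|V(I)|-c-1}(n-j)\cdot(Dn^{-2/(r+1)})^{-|I|}k_0^{-c/(r+1)}$, which is immediate from $|V(I)|-c\ge\frac{2}{r+1}\ell+\frac{c}{r+1}$ once $D$ is chosen large enough, since each of the $\approx\frac{2}{r+1}\ell$ factors $n-j$ is $\ge n/2$ for $j$ in the relevant range (here $|V(I)|\le 2|I|/(r-2)\cdot\text{(stuff)}\le n/r$, say, so $n-j\ge n/2$). This is exactly parallel to the $C_4$ argument and should present no real difficulty.
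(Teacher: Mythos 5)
Your overall strategy is the right one and matches the paper's: derive the ordering-counting bound analogous to~\eqref{equa:bound4C4}, split into the regimes $|I|\le n/(2r)$ and $|I|>n/(2r)$, and feed \cref{lemma:small-spread-new} and \cref{lemma:large-spread-new} respectively into the factorial estimate. Your observation that one can compare orderings directly (letting $d_{r,2}^n$ cancel by construction) rather than run Stirling on $|\cCrtn|$ is also the cleaner bookkeeping, and it is essentially what the paper does implicitly.

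However, your handling of the regime $|I|>n/(2r)$ has a genuine gap. The quantity you need to control is $|V(I)|-c$, since the falling factorial $(n-1)!/(n-|V(I)|+c-1)!$ has exactly $|V(I)|-c$ factors, and the superspreadness requires this to be $\gtrsim n^{\frac{2}{r+1}|I|}$. \Cref{lemma:large-spread-new} only bounds $|V(I)|$ and says nothing about $c$. You propose to "drop the component term" and treat the resulting discrepancy as a benign $n^{\Theta(1)}$ prefactor because $|I|\ge n/(2r)$, but $c$ can be as large as $\Theta(|I|)=\Theta(n)$ (for instance, take $I$ to be a union of many disjoint short segments), so "ignoring the $-c-1$" is not a constant-order correction and the naive substitution $(n-|V(I)|+c-1)!\le(n-\frac{2}{r+1}|I|+c-1)!$ still carries the uncontrolled $+c$ inside the factorial. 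The paper resolves this by splitting the components: at most $O(r)$ components can have more than $n/(2r)$ edges, and to those one applies \cref{lemma:large-spread-new} componentwise ($|V(I_j)|\ge\frac{2}{r+1}|I_j|$), while to the remaining (small) components one applies \cref{lemma:small-spread-new} ($|V(I_j)|-1\ge\frac{2}{r+1}|I_j|$). Summing yields $|V(I)|-c\ge\frac{2}{r+1}|I|-O(r)$, and it is this $O(r)$ (not $O(c)$) additive loss that is absorbed by $n^{O(1)/|I|}\to1$. Without this big/small decomposition the $|I|>n/(2r)$ case does not close. The rest of your argument, including the $|I|\le n/(2r)$ regime and the extraction of the $k_0^{-\alpha c}$ factor from the $+\frac{c}{r+1}$ term in \cref{lemma:small-spread-new}, is sound and aligns with the paper.
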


\begin{proof}
Let $I\subseteq M$.
Our first aim is to obtain a general upper bound on $|\cCrtn\cap \langle I\rangle|$.
If $I$ is not contained in any copy of $\Krtn$, we automatically have $|\cCrtn\cap \langle I\rangle|=0$, so we may assume $I$ is a subgraph of some copy of $\Krtn$.
Recall that each copy of $\Krtn$ can be defined by an ordering of the $n$ vertices, so it suffices to bound the number of orderings which yield a copy of $\Krtn$ containing $I$.

Let $I_1,\ldots,I_c$ be the components of $I$ with at least one edge.
For each $j\in[c]$, let $x_j\in V(I_j)$ (there are $v_j$ such possible choices, which leads to a total of
\begin{equation}\label{equa:Krtnboundspread1}
    \prod_{j=1}^cv_j\leq 2^{|I|}
\end{equation}
choices for $\{x_1,\ldots,x_j\}$\COMMENT{We have $v_j\leq|I_j|+1\leq 2^{|I_j|}$.}).
Then, each ordering $\sigma$ of $[n]$ which defines a copy of $\Krtn$ containing $I$ induces a unique ordering $\tau=\tau(\sigma)$ on the set consisting of $x_1,\ldots,x_j$ and all other isolated vertices.
The total number of such orderings $\tau$ is 
\begin{equation}\label{equa:Krtnboundspread2}
    (n-|V(I)|+c)!
\end{equation}
so now it suffices to bound, for each such $\tau$, the number of orderings $\sigma$ with $\tau=\tau(\sigma)$.

Given an ordering $\tau$, in order to obtain an ordering $\sigma$ with $\tau=\tau(\sigma)$, it suffices to `insert' the missing vertices into the ordering.
That is, for each $j\in[c]$, we need to `insert' the other vertices of $V(I_j)$ into the ordering.
By considering a labelling of the vertices of $I_j$ in such a way that each subsequent vertex is a neighbour of at least one previously included vertex (and taking into account that $x_j$ is already included), we note that there are at most $2r$ choices for each vertex (recall that $\Delta(\Krtn)<2r$).
This leads to a total of at most $(2r)^{|V(I_j)|-1}\leq(2r)^{|I_j|}$ possible ways to include the component $I_j$.
By considering all $j\in[c]$, we conclude that there are at most 
\begin{equation}\label{equa:Krtnboundspread3}
    \prod_{j=1}^c(2r)^{|I_j|}=(2r)^{|I|}
\end{equation}
orderings $\sigma$ with $\tau=\tau(\sigma)$.

Combining \eqref{equa:Krtnboundspread1}, \eqref{equa:Krtnboundspread2} and \eqref{equa:Krtnboundspread3} with the fact that each copy of $\Krtn$ is given by $d_{r,2}^{-n}2n/(r-s)$ distinct orderings (see \cref{fact:number-cCrsn}), we conclude that 
\begin{equation}\label{equa:spreadboundKr2n}
    |\cCrtn\cap \langle I\rangle|\le \frac{r-s}{2}d_{r,2}^n(4r)^{|I|}(n-|V(I)|+c-1)!.
\end{equation}

We can now estimate the spreadness of $\cCrtn$.
Consider first any $I\subseteq M$ with $|I|>n/(2r)$.
Note that $I$ has at most $2r$ components $I_j$ of size larger than $n/(2r)$.
For each of these components, we use \cref{lemma:large-spread-new} to bound $|V(I_j)|$.
For the remaining components $I_j$, we simply use the bound $|V(I_j)|-1\geq2|I_j|/(r+1)$, which follows by \cref{lemma:small-spread-new}.
By substituting these bounds into \eqref{equa:spreadboundKr2n}, we conclude that
\[|\cCrtn\cap\langle I\rangle|\leq\frac{r-s}{2}d_{r,2}^n(4r)^{|I|}\left(n-\frac{2}{r+1}|I|+2r\right)!.\]
By comparing this with the expression given in \cref{fact:number-cCrsn} (and taking into account the bound on $|I|$), we conclude that $|\cCrtn\cap\langle I\rangle|\leq q^{|I|}|\cCrtn|$ whenever $q\geq c_1 n^{-2/(r+1)}$\COMMENT{By \cref{fact:number-cCrsn}, we have that
\[|\cCrtn|=\frac{r-2}{2}d_{r,2}^n(n-1)!.\]
Now it suffices to satisfy
\[\frac{r-2}{2}d_{r,2}^n(4r)^{|I|}\left(n-\frac{2}{r+1}|I|+2r\right)!\leq q^{|I|}\frac{r-2}{2}d_{r,2}^n(n-1)!.\]
By Stirling's approximation, for sufficiently large $n$, it suffices to satisfy
\[2(4r)^{|I|}\left(n-\frac{2}{r+1}|I|+2r\right)_{2r}\sqrt{2\pi\left(n-\frac{2}{r+1}|I|\right)}\left(\frac{n-\frac{2}{r+1}|I|}{e}\right)^{n-\frac{2}{r+1}|I|}\leq q^{|I|}\frac{1}{n}\sqrt{2\pi n}\left(\frac{n}{e}\right)^n.\]
Rearranging,
\[q\geq\left(2\sqrt{n\left(n-\frac{2}{r+1}|I|\right)}\left(n-\frac{2}{r+1}|I|+2r\right)_{2r}\right)^{1/|I|}4re^{\frac{2}{r+1}}\left(\frac{n-\frac{2}{r+1}|I|}{n}\right)^{n/|I|}\left(n-\frac{2}{r+1}|I|\right)^{-2/(r+1)}.\]
Now, using the bound $|I|\geq n/(2r)$, as $n$ goes to infinity, the first term tends to $1$, so we have that, if $n$ is sufficiently large, it suffices to have
\[q\geq8re^{\frac{2}{r+1}}\left(1-\frac{1}{r(r+1)}\right)^{2r}\left(1-\frac{1}{r(r+1)}\right)^{-2/(r+1)}n^{-2/(r+1)}=c_1n^{-2/(r+1)}.\]}, where $c_1$ is a constant that depends only on $r$.

Consider now some $I\subseteq M$ with $|I|\leq n/(2r)=|\Krtn|/(r(r+1))$ (see \cref{fact:prop_Krsn}), and let $c$ be its number of components.
By making use of \cref{lemma:small-spread-new} and \eqref{equa:spreadboundKr2n}, we have that 
\[|\cCrtn\cap\langle I\rangle|\leq\frac{r-s}{2}d_{r,2}^n(4r)^{|I|}\left(n-\frac{2}{r+1}|I|-\frac{c}{r+1}-1\right)!.\]
As above, by comparing this with the expression given in \cref{fact:number-cCrsn}, and taking into account also \cref{fact:prop_Krsn}, for $q\geq c_2 n^{-2/(r+1)}$, where $c_2$ depends only on $r$, we have that $|\cCrtn\cap\langle I\rangle|\leq q^{|I|}|\Krtn|^{-c/(r+1)}|\cCrtn|$\COMMENT{As before, by \cref{fact:number-cCrsn},
\[|\cCrtn|=\frac{r-s}{2}d_{r,2}^n(n-1)!,\]
and recall from \cref{fact:prop_Krsn} that
\[|\Krtn|=\frac{r+1}{2}n.\]
Now it suffices to satisfy
\[\frac{r-s}{2}d_{r,2}^n(4r)^{|I|}\left(n-\frac{2}{r+1}|I|-\frac{c}{r+1}-1\right)!\leq q^{|I|}\left(\frac{r+1}{2}n\right)^{-c/(r+1)}\frac{r-s}{2}d_{r,2}^n(n-1)!.\]
By Stirling's approximation, for sufficiently large $n$, it suffices to satisfy
\begin{align*}
    2(4r)^{|I|}\frac{1}{n-\frac{2}{r+1}|I|-\frac{c}{r+1}}\sqrt{2\pi\left(n-\frac{2}{r+1}|I|-\frac{c}{r+1}\right)}\left(\frac{n-\frac{2}{r+1}|I|-\frac{c}{r+1}}{e}\right)^{n-\frac{2}{r+1}|I|-\frac{c}{r+1}}\\
    \leq q^{|I|}\left(\frac{r+1}{2}n\right)^{-c/(r+1)}\frac{1}{n}\sqrt{2\pi n}\left(\frac{n}{e}\right)^n.
\end{align*}
Rearranging,
\begin{align*}
    q\geq\left(2\sqrt{n/\left(n-\frac{2}{r+1}|I|-\frac{c}{r+1}\right)}\right)^{1/|I|}4re^{\frac{2}{r+1}}\left(n-\frac{2}{r+1}|I|-\frac{c}{r+1}\right)^{-2/(r+1)}\\
    \cdot\left(\frac{e(r+1)n}{2\left(n-\frac{2}{r+1}|I|-\frac{c}{r+1}\right)}\right)^{\frac{c}{(r+1)|I|}}\left(\frac{n-\frac{2}{r+1}|I|-\frac{c}{r+1}}{n}\right)^{n/|I|}.
\end{align*}
Now, consider each term here.
The first term goes to $1$ if $|I|$ goes to infinity, and is bounded by a constant otherwise ($\leq2$), so in general we simply bound it by $2$.
The term $4re^{\frac{2}{r+1}}$ is also just a constant.
By the bound on $|I|$, the next term is always bounded from above by $(n/2)^{-2/(r+1)}$.
By the bound on $|I|$, the next term (the first in the second line) can be bounded from above by $(e(r+1))^{c/((r+1)|I|)}\leq(e(r+1))^{1/(r+1)}$, where the large parenthesis in the denominator is always at least $n/2$ and $c\leq|I|$.
The last term can be bounded from above by $1$.
Combining all of these, it suffices to have
\[q\geq c_2n^{-2/(r+1)},\]
where $c_2$ depends only on $r$.}.
\end{proof}

\begin{proof}[Proof of \cref{thm:Kr-cycle}]
By \cref{lem:spread}, we have that $\cCrtn$ is $(O(n^{-2/(r+1)}),1/(r+1),1/(r(r+1)))$-superspread. 
By \cref{thm:main} it follows that, if $C$ is sufficiently large, for $p\ge Cn^{-2/(r+1)}$ we have
\[
\PP\left[\Krtn\subseteq \Gnp\right]\ge 1/2.
\]
To finish the argument, one can employ a general result of Friedgut~\cite{Fri05} (see also~\cite{NS20}) which allows to establish that 
$\PP\left[\Krtn\subseteq G(n,(1+o(1))p)\right]=1-o(1)$.
\end{proof}

\section{Concluding remarks}\label{sec:conclude}

\subsection{Dense overlapping \texorpdfstring{$K_r$}{Kr}-cycles}

As mentioned in the introduction, a general result of \citet{Ri00} provides a sufficient condition for a spanning graph to be contained in $\Gnp$. 
For a graph $H=(V,E)$, let $v(H)\coloneqq|V|$ and $e(H)\coloneqq|E|$.
For each integer $v$, let $e_H(v)\coloneqq\max \{ e(F) : F \subseteq H, v(F)=v \}$.
Then, the following parameter will be responsible for the upper bound on the threshold for the property that $H\subseteq \Gnp$:
\begin{align*}
\gamma(H) \coloneqq \max_{3 \le v \le n} \left\{ \frac{e_H(v)}{v-2} \right\}.
\end{align*}
Riordan proved the following (see also~\cite{PP15} for its generalization to hypergraphs).

\begin{theorem}\label{thm:Riordan}
Let  $H=H^{(i)}$ be a  sequence of graphs with $n=n(i)$ vertices (where $n$ tends to infinity with $i$), $e(H)=\alpha \binom{n}{2} = \alpha (n)\binom{n}{2}$ edges and $\Delta=\Delta(H)$.
Let $p = p(n)\colon \NN\to [0,1)$. 
If $H$ has a vertex of degree at least $2$ and $n p^{\gamma(H)} \Delta^{-4} \rightarrow \infty$, then a.a.s.~the random graph $\Gnp$ contains a copy of $H$.
\end{theorem}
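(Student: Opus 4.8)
Since \cref{thm:Riordan} is a classical result one would normally just cite Riordan's paper, but here is the route I would take, which reuses the machinery developed above, at least in the bounded-degree range. The plan is to deduce the conclusion from \cref{thm:main}: it suffices to show that the hypergraph $\cF$ of all copies of $H$ in $K_n$ is $(q,\alpha,\delta)$-superspread for a suitable constant $\alpha<1/4$, any $\delta<1$, and $q=O\bigl((\Delta/n)^{1/\gamma(H)}\bigr)$ — the side condition $q\ge 4e(H)/(Cn^2)$ of \cref{thm:main} being automatic, since a single edge of $H$ already forces the spreadness to be at least of order $e(H)/n^2$. Once this is established, $np^{\gamma(H)}\Delta^{-4}\to\infty$ gives $p\ge C\cdot(\Delta/n)^{1/\gamma(H)}$ for every constant $C$ (indeed $np^{\gamma}\Delta^{-4}\to\infty$ is equivalent to $p\gg\Delta^{4/\gamma}n^{-1/\gamma}\ge(\Delta/n)^{1/\gamma}$), so \cref{thm:main} yields $\PP[H\subseteq\Gnp]\ge1-\eps$; and one upgrades this to ``a.a.s.'' by the same Friedgut-type sharp-threshold argument used in the proofs of \cref{thm:Ctwo-threshold,thm:Kr-cycle} (here the hypothesis that $H$ has a vertex of degree at least $2$ is what keeps the property nondegenerate).

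The superspread bound is the combinatorial heart. Fix $I\subseteq M$, viewed as a subgraph of $K_n$ with components $I_1,\dots,I_{c_I}$ carrying at least one edge (isolated vertices may be discarded). Then $|\cF\cap\langle I\rangle|$ is the number of copies of $H$ in $K_n$ containing $I$, which is at most $|\cF|$ times $t_I(H)/(n)_{v(I)}$, where $t_I(H)$ is the number of embeddings of $I$ into $H$ as a subgraph. Rooting each component of $I$ at one vertex and growing it so that every new vertex is adjacent to an earlier one, each new vertex has at most $\Delta$ images, so $t_I(H)\le v(H)^{c_I}\Delta^{v(I)-c_I}=n^{c_I}\Delta^{v(I)-c_I}$ (this is the estimate behind \cref{lem:num_subgraphs}). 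Hence, up to absolute constants absorbable into $q$, $|\cF\cap\langle I\rangle|\lesssim(\Delta/n)^{v(I)-c_I}|\cF|$. Now feed in the balance hypothesis componentwise: whenever $v(I_j)\ge 3$ we have $e(I_j)\le e_H(v(I_j))\le\gamma(H)(v(I_j)-2)$, while if $v(I_j)=2$ then trivially $e(I_j)=1$; either way $v(I_j)-1\ge e(I_j)/\gamma(H)+\tfrac12$, and summing gives $v(I)-c_I\ge |I|/\gamma(H)+c_I/2$. Substituting, $|\cF\cap\langle I\rangle|\lesssim q^{|I|}(n/\Delta)^{-c_I/2}|\cF|$ with $q\asymp(\Delta/n)^{1/\gamma(H)}$; this is $q$-spread, and the extra factor $(n/\Delta)^{-c_I/2}$ dominates $e(H)^{-\alpha c_I}$ for any fixed $\alpha<1/4$ once $n$ is large, giving the required $(q,\alpha,\delta)$-superspreadness.

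The point where this plan does not immediately go through — and, I expect, the genuine obstacle — is the case $\Delta=\Delta(n)\to\infty$, which covers several headline applications (hypercubes, lattices of growing dimension) but which \cref{thm:main} as stated excludes, since there $\Delta\le d$ is a fixed constant. One would then have to rerun the fragmentation argument of \cref{lem:fragmentation} with $\Delta$ as a growing parameter; the greedy subgraph counts contribute several powers of $\Delta$, and carrying these through the $O(1/\alpha)$ rounds of exposure is what should turn the crude $\Delta^{-4}$ in the hypothesis into the right amount of slack (the calculation above, valid for bounded $\Delta$, would by itself need only $\Delta^{-1/\gamma(H)}$). An alternative, closer to Riordan's original argument, bypasses \cref{thm:main} and works directly with a second-moment computation for the number of copies of $H$; the subtlety there is that the plain second moment diverges — two copies sharing a single edge already contribute a factor of order $e(H)^2/(n^2p)$, which tends to infinity — so one must decouple overlapping copies, for instance by conditioning on the counts of the small subgraphs of $H$ responsible for the excess variance, and it is this clustering that makes the argument delicate.
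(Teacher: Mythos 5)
The paper does not prove \cref{thm:Riordan} at all: it is quoted verbatim from Riordan's paper~\cite{Ri00} (``Riordan proved the following\dots''), and is used only as a black box in \cref{sec:conclude} for the case $s\ge 3$ of spanning $K_r$-cycles. So there is no internal proof against which to compare your sketch; the honest ``proof'' in the paper's logic is the citation.

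That said, your sketch of how one might recover a special case from \cref{thm:main} is sensible, and you have correctly located the genuine obstruction. \Cref{thm:main} requires $\Delta(F)\le d$ for a \emph{fixed} constant $d$ (the dependence is hard-wired into \cref{lem:num_subgraphs} and into the fragmentation estimates, e.g.\ the bound $e^{d^2}=O(1)$ in the proof of \cref{claim:pathological}), so the route through superspreadness can only deliver the bounded-degree case of Riordan's theorem. Riordan's statement, by contrast, is specifically engineered for $\Delta=\Delta(n)\to\infty$ — that is the whole reason the $\Delta^{-4}$ appears in the hypothesis, and it is precisely the regime needed for the flagship applications (hypercubes, $d$-dimensional lattices with $d\to\infty$). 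So your plan, as you say yourself, does not prove the theorem as stated; making the fragmentation lemma degree-sensitive and tracking the resulting powers of $\Delta$ through $O(1/\alpha)$ rounds would be new work, not a corollary of what is in this paper. Two smaller remarks on the bounded-degree part: your superspreadness computation is correct, but note that the hypothesis ``$H$ has a vertex of degree at least $2$'' is consumed there — it is what guarantees $\gamma(H)\ge 2$, which you use to handle the components $I_j$ with $v(I_j)=2$ — and not in the Friedgut step, where the property $\{H\subseteq G\}$ is monotone and symmetric regardless. Finally, it is worth observing that for the paper's actual use of \cref{thm:Riordan} (the graphs $\Krsn$ with $r,s$ fixed and $s\ge 3$), $\Delta(\Krsn)=2r-s-1$ is a constant, so the bounded-degree version you outline would in fact have sufficed for that application, even though it does not give the theorem in the generality in which it is quoted.
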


From \cref{fact:prop_Krsn} it follows that $\gamma(\Krsn)\ge \frac{r+s-1}{2}$ and, since $\gamma(K_r)>\frac{r+s-1}{2}$ for $s\in[2]$, Riordan's theorem does not provide optimal bounds on the theshold in the case of our \cref{thm:Kr-cycle} (nor for $K_{r,1,n}$, for which the threshold was determined by \citet{Frieze20}).
However, in the cases for $s\ge 3$, Riordan's theorem suffices and yields the correct threshold $n^{-2/(r+s-1)}$ for the property that $\Gnp$ contains a copy of $\Krsn$. 

\subsection{Extensions: hypergraphs and rainbow thresholds}

Throughout this paper, for simplicity, we have focused on properties of random graphs.
However, we believe that \cref{thm:main} extends to random hypergraphs without much issue.

Very recently, Frieze and Marbach~\cite{FM21} extended the results from~\cite{FKNP19,KNP20} to rainbow versions, where the vertices of some $r$-uniform hypergraph are colored randomly with $r$ colors.
It is then shown in~\cite{FM21} that the upper bounds on the thresholds as proved in~\cite{FKNP19,KNP20} remain asymptotically the same to yield a rainbow hyperedge or a rainbow copy of a spanning structure (e.g., bounded degree spanning tree, square of a Hamilton cycle), and the result is also extended to a rainbow version of the containment of the $k$-th power of a Hamilton cycle.
We believe that the fragmentation lemma, \cref{lem:fragmentation}, and \cref{thm:main} also admit rainbow versions.

\bibliographystyle{mystyle}
\bibliography{thresholds}

\appendix

\section{Proof of Lemma~\ref{lem:fragmentation}} \label{app:Fragmentation}

We begin with the proof of the auxiliary \cref{lem:num_subgraphs}.

\begin{proof}[Proof of \cref{lem:num_subgraphs}]
The proof follows along the same lines as the proof of Proposition~2.2 from~\cite{KNP20}: one uses the fact that the number of connected $h$-edge subgraphs of a graph $G$ containing a given vertex is less than $(e\Delta(G))^h$.

We now bound the number of ways in which we may construct subgraphs with $\ell$ edges and $c$ components. 
First, we specify the roots of the $c$ components of the $\ell$-edge subgraph of $F$, which can be done in at most $2^{c}\binom{f}{c}$ ways\COMMENT{For each root, we first choose an edge, and then one vertex incident to this edge (note we do not have any bound on the number of vertices by assumption).}.
Next, we choose the sizes of the components; this can be done in at most $\binom{\ell-1}{c-1}$ ways (this is the number of $c$-compositions of $\ell$).
Say that, to each of the components $j\in[c]$, we have assigned size $\ell_j$.
We finally choose the subgraphs which conform each component along $F$, which, by the fact mentioned in the first paragraph, can be done in at most $\prod_{j=1}^c(ed)^{\ell_j}=(ed)^\ell$ ways.
This yields a total of at most $(ed)^\ell\binom{\ell-1}{c-1} 2^{c}\binom{f}{c}\le (4ed)^{\ell}\binom{f}{c}$ subgraphs.
\end{proof}

The proof of \cref{lem:fragmentation} is essentially from~\cite[Lemma~3.1]{KNP20}: 
the only changes we need to make are replacing $2n$ in~\cite{KNP20} by $k_i$ and slightly adapting the computations for the verification of \cref{claim:pathological} in the pathological case below.

Since we will follow the proof of Lemma~3.1 from~\cite{KNP20} closely, we use the same notation wherever possible, so that the reader familiar with the argument can quickly verify the validity of \cref{lem:fragmentation}. 

\begin{proof}[Proof of \cref{lem:fragmentation}]
Without loss of generality, we may assume that $n$ is sufficiently large, and thus $k_i$ and $k$ are also sufficiently large.
We may also assume that $\cH_i$ is $k_i$-uniform (indeed, every set $S\in\cH_i$ is contained in some $S'\in\cF$, so we can add some arbitrary $k_i-|S|$ vertices from $S'\setminus S$ to $S$).
Let $M\coloneqq \binom{[n]}{2}$ and $m\coloneqq |M|$.

Now, consider pairs $(S,X)$ with $S\in\mathcal{H}_i$ and $X\subseteq M$ such that $|S\cap X|=t$, for some $t\in[0,k_i]$\COMMENT{Note, actually, that we may assume $t>k$, as otherwise the pairs are good by definition ($S$ itself will yield a good fragment).}.
Our aim is to prove that, for each $t$,
\begin{equation}\label{eq:intersection_t}
\left|\left\{(S,X):|S\cap X|=t,(S,X)\text{ is $k$-bad}\right\}\right|\leq 2C^{-k/3}|\cH_i|\binom{k_i}{t}\binom{m-k_i}{w-t}.
\end{equation}
Summing over all $t\in[0,k_i]$ yields the counting version of the bound~\eqref{eq:fragmentation}.

Let us bound the number of bad pairs $(S,X)$ with $|S\cap X|=t$, for some fixed $t\in[0,k_i]$.
Set $w'\coloneqq w-t$, so $|X\setminus S|=w'$ and $|X\cup S|=w'+k_i$.
A set $Z\in\binom{M}{w'+k_i}$ will be called \emph{pathological} if 
\[
\left|\left\{S\in \cH_i : S\subseteq Z,(S,Z\setminus S) \text{ is $k$-bad} \right\}\right|>C^{-k/3}|\cH_i|\binom{m-k_i}{w'}\bigg/\binom{m}{w'+k_i}.
\]
A pair $(S,X)$ with $|S\cup X|=w'+k_i$ will be called \emph{pathological} if $S\cup X$ is pathological.
We estimate the contributions of pathological and nonpathological pairs to \eqref{eq:intersection_t} separately.

\textbf{Nonpathological pairs.}
To specify a nonpathological pair $(S,X)$ with $|S\cap X|=t$, one first specifies $Z=S\cup X$, then $S$, and finally $X$. 
We use the crude bound $\binom{m}{w'+k_i}$ for the number of sets~$Z$.
Since $Z$ is nonpathological, there are at most $C^{-k/3}|\cH_i|\binom{m-k_i}{w'}\big/\binom{m}{w'+k_i}$ choices for $S$, since $(S,Z\setminus S)$ is $k$-bad if $(S,X)$ is $k$-bad.
Finally, one can specify $X\subseteq Z$ in at most $\binom{k_i}{t}$ ways, since $|X\cap S|=t$ and $|S|=k_i$.
Altogether, this gives a total of at most 
\begin{equation}\label{equa:nonpatho}
    \binom{m}{w'+k_i}\cdot \binom{k_i}{t}\cdot C^{-k/3}|\cH_i|\binom{m-k_i}{w'}\bigg/\binom{m}{w'+k_i}\overset{w'=w-t}{=}C^{-k/3}|\cH_i|\binom{m-k_i}{w-t} \binom{k_i}{t}
\end{equation}
$k$-bad pairs.

\textbf{Pathological pairs.} 
The following claim will assist in estimating pathological contributions.

\begin{claim}\label{claim:pathological}
For a given $S\in\cH_i$ and\/ $Y$ chosen uniformly at random from $\binom{M\setminus S}{w'}$, we have that
\begin{equation}\label{eq:claim}
\EE\left[\left|\left\{J\in\cH_i: J\subseteq S\cup Y,|J\cap S|\ge k\right\}\right|\right]\le C^{-2k/3}|\cH_i|\binom{w'+k_i}{k_i}\bigg/\binom{m}{k_i}.
\end{equation}
\end{claim}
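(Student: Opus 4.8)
The plan is to bound the expectation in \eqref{eq:claim} by summing over the possible values $s\coloneqq|J\cap S|$ of the overlap. Fix $S\in\cH_i$; since $\cH_i$ is $k_i$-uniform, for each $s\in[k,k_i]$ and each $s$-subset $L\subseteq S$ the number of $J\in\cH_i$ with $J\cap S\supseteq L$ is, by the fragmentation property \eqref{equa:fragmentationProperty} and the superspreadness of $\cF=\cH_0$, at most $|\cF\cap\langle L\rangle|\le q^{s}|\cF|$ (using only the plain $q$-spread bound, which suffices here). Hence the number of $J\in\cH_i$ with $|J\cap S|\ge k$ is at most $\sum_{s=k}^{k_i}\binom{k_i}{s}q^{s}|\cF|$. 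However, this only controls the count; I still need the probability that such a $J$ lies inside $S\cup Y$.

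Next I would handle the randomness of $Y$. For a fixed $J\in\cH_i$ with $|J\cap S|=s$, we have $|J\setminus S|=k_i-s$, and $J\subseteq S\cup Y$ precisely when $J\setminus S\subseteq Y$. Since $Y$ is a uniformly random $w'$-subset of $M\setminus S$ (a set of size $m-k_i$), this probability is $\binom{m-k_i-(k_i-s)}{w'-(k_i-s)}/\binom{m-k_i}{w'}\le \binom{w'}{k_i-s}/\binom{m-k_i}{k_i-s}\le (w'/(m-k_i))^{k_i-s}$, roughly $(w/m)^{k_i-s}$ up to a harmless constant factor since $w=o(m)$. Combining with the counting bound and writing $|\cH_i|\ge|\cF|/2^{t}\ge|\cF|2^{-i}$ — actually more simply using $|\cF|\le 2|\cH_i|$ is \emph{not} available, so I would instead carry $|\cF|$ through and at the end use $|\cH_j|\ge|\cH_{j-1}|/2$ only if needed; in fact the cleanest route is to note $|\cF|\le|\cH_i|\cdot 2^{?}$ is false in general, so I will phrase the final bound in terms of $|\cH_i|$ by observing that each edge of $\cH_i$ came from a \emph{distinct} edge of $\cF$, giving $|\cH_i|\le|\cF|$, and then absorb the ratio $|\cF|/|\cH_i|$ — wait, that goes the wrong way. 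The correct move, exactly as in \cite{KNP20}, is to keep $|\cF|$ on the right-hand side and note $|\cF|=|\cH_0|$, then bound $|\cF|$ by a constant times $|\cH_i|$ is unavailable; instead one simply proves the inequality with $|\cF|$ in place of $|\cH_i|$ on the right and then uses $|\cF|\le|\cH_i|\cdot(\text{something})$ — since in the application $|\cH_i|\ge 2^{-i}|\cF|=\Theta(|\cF|)$, I will simply state and prove \eqref{eq:claim} with the hypothesis $|\cH_j|\ge|\cH_{j-1}|/2$ giving $|\cF|\le 2^{i}|\cH_i|\le 2^{t}|\cH_i|$, absorbed into the constant $C$.

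So the key steps, in order: (1) fix $S$, split the expectation over $s=|J\cap S|\in[k,k_i]$; (2) for each $s$ bound the number of candidate $J$ by $\binom{k_i}{s}q^{s}|\cF|$ via \eqref{equa:fragmentationProperty} and $q$-spreadness; (3) bound $\PP[J\setminus S\subseteq Y]\le (2w/m)^{k_i-s}$ for $n$ large; (4) multiply and sum, getting $|\cF|\sum_{s=k}^{k_i}\binom{k_i}{s}q^{s}(2w/m)^{k_i-s}$; (5) recognise $\sum_{s}\binom{k_i}{s}q^{s}(2w/m)^{k_i-s}\le (q+2w/m)^{k_i}$ and compare with $\binom{w'+k_i}{k_i}/\binom{m}{k_i}\approx (w/m)^{k_i}$; since $w=Cq\binom{n}{2}\asymp Cqm$, we have $q\asymp w/(Cm)$, so each term with overlap $s$ carries a factor $(1/C)^{s}$ relative to $(w/m)^{k_i}$, and summing the geometric tail from $s=k$ gives the claimed $C^{-2k/3}$ (with room to spare, the extra being absorbed by enlarging $C_0$ and by the $2^{t}$ factor from $|\cF|$ versus $|\cH_i|$). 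The main obstacle I anticipate is step (5): getting the constant in the exponent to come out as $2k/3$ rather than, say, $k$, which requires being slightly careful that the binomial factors $\binom{k_i}{s}$ and the polynomial corrections $\binom{w'+k_i}{k_i}/\binom{m}{k_i}$ versus $(w/m)^{k_i}$ do not eat too much of the $C^{-s}$ saving; this is exactly where the proof of \cite[Lemma~3.1]{KNP20} must be re-examined with $k_i$ in place of $2n$ and with the weaker lower bound $k_i=\omega(k_0^{\alpha})$, and where the slightly different bookkeeping the authors allude to is needed.
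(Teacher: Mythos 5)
Your step (2) is where the argument breaks, and it is precisely the point where the superspreadness hypothesis (rather than plain $q$-spreadness) is needed. Using only $q$-spreadness, the best you can say for a fixed $s$ is
\[
\left|\{J\in\cH_i: |J\cap S|=s\}\right| \;\le\; \sum_{L\subseteq S,\,|L|=s}|\cF\cap\langle L\rangle| \;\le\; \binom{k_i}{s}q^{s}|\cF|,
\]
and after the clean-up you describe in steps (3)--(5) the $s$-term contributes, relative to $\binom{w'+k_i}{k_i}\big/\binom{m}{k_i}$, a factor of order $\binom{k_i}{s}e^{O(s)}C^{-s}$. This must be summed from $s=k$, and for $s$ near the bottom of the range, $\binom{k_i}{s}\le(ek_i/s)^{s}\le(ek_0^{\alpha})^{s}$ (since $s\ge k=k_ik_0^{-\alpha}$), which grows like $k_0^{\alpha s}$. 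No fixed constant $C$ can dominate this, so the resulting sum does not tend to zero; it actually diverges as $k_0\to\infty$. The $\binom{k_i}{s}$ blow-up is exactly the enemy here, and it cannot be ``absorbed by enlarging $C_0$'' as your step (5) hopes.

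The paper's fix is to bound the fraction $f_j$ of $J\in\cH_i$ with $|J\cap S|=j$ by summing over subgraphs $I\subseteq S$ stratified by their number of components $c_I$ and invoking the superspread bound $|\cF\cap\langle I\rangle|\le q^{|I|}k_0^{-\alpha c_I}|\cF|$ together with \cref{lem:num_subgraphs}: the number of $I\subseteq S$ with $|I|=j$ and $c$ components is at most $(4ed)^{j}\binom{k_i}{c}$, and the key cancellation is $\sum_{c\le j}\binom{k_i}{c}k_0^{-\alpha c}\le\sum_{c\le j}(ek_ik_0^{-\alpha}/c)^{c}=\sum_{c\le j}(ek/c)^{c}\le je^{k}=e^{O(j)}$, using $j\ge k$. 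This replaces the unusable $\binom{k_i}{j}$ by $e^{O(j)}$ and makes the subsequent geometric comparison with $C^{-j}$ go through. So the component-wise gain $k_0^{-\alpha c_I}$ is not a bookkeeping refinement but the structural hypothesis that makes \cref{claim:pathological} true; your proposal, by declaring that ``plain $q$-spread suffices,'' omits the central new idea of the lemma.

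Two smaller remarks. First, your steps (1), (3), (4) and the comparison in (5) are the right skeleton and do match the paper's decomposition; once you restore the superspread/component argument in step (2), the rest works with only routine care about the ratios $\frac{(w')_{k_i-j}}{(w'+k_i)_{k_i-j}}$, $\frac{(m)_{k_i-j}}{(m-k_i)_{k_i-j}}$ and $\frac{(m-k_i+j)_j}{(w'+j)_j}$, as in the paper. Second, you correctly note the need to pass between $|\cF|$ and $|\cH_i|$ via the hypothesis $|\cH_j|\ge|\cH_{j-1}|/2$; the factor $2^{i}$ is bounded since $i\le t=O(1)$, and this is indeed harmless.
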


The number of pathological pairs $(S,X)$ can now be estimated as follows.
First, we choose~$S$ and $S\cap X$, which can be done in at most $|\cH_i|\binom{k_i}{t}$ ways.
We then need to choose $X\setminus S$. 
Since we are counting pairs $(S,X)$ which are $k$-bad, for every $J\in\cH_i$ with $J\subseteq S\cup X$ we have that $|J\cap S|\ge |J\setminus X|\ge k$\COMMENT{Strict inequality, actually.}.
Since we are only considering pathological pairs $(S,X)$, by definition this yields the following lower bound on the number of such $J\in\cH_i$\COMMENT{By definition, we have that 
\[\left|\left\{J\in \cH_i : J\subseteq S\cup X,(J,(S\cup X)\setminus J) \text{ is $k$-bad} \right\}\right|>C^{-k/3}|\cH_i|\binom{m-k_i}{w'}\bigg/\binom{m}{w'+k_i}.\]
Now it suffices to check that 
\[\left\{J\in \cH_i : J\subseteq S\cup X,(J,(S\cup X)\setminus J) \text{ is $k$-bad} \right\}\subseteq \left\{J\in\mathcal{H}_i:J\subseteq S\cup (X\setminus S), |J\cap S|\ge k\right\}.\]
Indeed, the conditions that $J\subseteq S\cup X$ or $J\subseteq S\cup (X\setminus S)$ are equivalent, so consider an arbitrary such $J$ such that $(J,(S\cup X)\setminus J)$ is $k$-bad.
Now, since $S\subseteq J\cup((S\cup X)\setminus J)$, the definition of $k$-bad means that, in particular, $|S\setminus((S\cup X)\setminus J)|>k$, and note that $S\setminus((S\cup X)\setminus J)=S\cap J$.
So indeed the containment above holds.}:
\begin{align*}
\left|\left\{J\in\mathcal{H}_i:J\subseteq S\cup (X\setminus S), |J\cap S|\ge k\right\}\right|&\ge C^{-k/3} |\cH_i|\binom{m-k_i}{w'}\bigg/\binom{m}{w'+k_i}\\
&=C^{-k/3} |\cH_i|\binom{w'+k_i}{k_i}\bigg/\binom{m}{k_i}.
\end{align*}
When choosing a uniformly random set $X\setminus S$ of size $w'$, \cref{claim:pathological} and Markov's inequality yield 
\[
\PP\left[\left|\left\{J\in\mathcal{H}_i:J\subseteq S\cup (X\setminus S), |J\cap S|\ge k\right\}\right|\ge C^{-k/3} |\cH_i|\binom{m-k_i}{w'}\bigg/\binom{m}{w'+k_i}\right]\le  C^{-k/3},
\]
which gives us at most $C^{-k/3}\binom{m-k_i}{w'}=C^{-k/3}\binom{m-k_i}{w-t}$ choices for $X\setminus S$. Altogether this yields at most 
\begin{equation}\label{equa:patho}
    C^{-k/3}\binom{m-k_i}{w-t}|\cH_i|\binom{k_i}{t}
\end{equation}
choices for pathological fragments.

From the bounds in each case, \eqref{equa:nonpatho} and \eqref{equa:patho}, we obtain~\eqref{eq:intersection_t}.
\end{proof}

Finally, we turn to the proof of \cref{claim:pathological}.

\begin{proof}[Proof of \cref{claim:pathological}]
For each $j\in[0,k_i]$, let $f_j$ denote the fraction of $J\in\cH_i$ with $|J\cap S|=j$.
Then, the left-hand side of~\eqref{eq:claim} is
\begin{equation}\label{eq:LHSa}
\sum_{j=k}^{k_i} f_j |\cH_i|\frac{\binom{m-k_i-(k_i-j)}{w'-(k_i-j)}}{\binom{m-k_i}{w'}}=\sum_{j=k}^{k_i} f_j |\cH_i|\frac{(w')_{k_i-j}}{(m-k_i)_{k_i-j}}.
\end{equation}
Hence, it suffices to show that, for each $j\in[k,k_i]$,
\begin{equation}\label{eq:fraction_compare}
f_j \frac{(w')_{k_i-j}}{(m-k_i)_{k_i-j}}\frac{\binom{m}{k_i}}{\binom{w'+k_i}{k_i}}=e^{O(j)} C^{-j},
\end{equation}
where the implied constants in the $O$ notation are independent of $C$\COMMENT{To see why this suffices, note that we simply need to consider the sum over all possible values of $j$, that is,
\[\sum_{j=k}^{k_i}e^{O(j)}C^{-j}\leq C^{-2k/3}.\]
To check that this holds, simply note that, by adjusting the value of $C$, we may assume that, say, $e^{O(j)}C^{-j}\leq C^{-9j/10}$ for all $j$ in the desired range.
Now this becomes a geometric series, where the rate is less than $1/2$, so the infinite sum (which is clearly an upper bound for the desired sum) is at most twice its first element.
But $2C^{-9k/10}\leq C^{-2k/3}$ for sufficiently large $k$ (which we may assume).}.

We rewrite the left-hand side of~\eqref{eq:fraction_compare} as follows:
\[
f_j \frac{(w')_{k_i-j}}{(m-k_i)_{k_i-j}}\cdot\frac{(m)_{k_i}}{(w'+k_i)_{k_i}}=f_j \frac{(w')_{k_i-j}}{(w'+k_i)_{k_i-j}}\cdot\frac{(m)_{k_i-j}}{(m-k_i)_{k_i-j}}\cdot\frac{(m-k_i+j)_{j}}{(w'+j)_{j}}.
\]
We next use the bounds $\frac{(w')_{k_i-j}}{(w'+k_i)_{k_i-j}}\le 1$, $\frac{(m)_{k_i-j}}{(m-k_i)_{k_i-j}}\le \exp\left(k_i(k_i-j)/(m-k_i)\right)\le e^{d^2}=O(1)$\COMMENT{We have that
\[\frac{(m)_{k_i-j}}{(m-k_i)_{k_i-j}}=\frac{m}{m-k_i}\frac{m-1}{m-k_i-1}\cdots\frac{m-k_i+j+1}{m-2k_i+j+1}\leq\left(\frac{m}{m-k_i}\right)^{k_i-j}=\left(1+\frac{k_i}{m-k_i}\right)^{k_i-j}\leq\exp\left(\frac{k_i(k_i-j)}{m-k_i}\right).\]
Finally, note that, by assumption on the maximum degree of $F$, we have that $k_i\leq nd/2$, so we can bound the above expression by 
\[\exp\left(\frac{k_i(k_i-j)}{m-k_i}\right)\leq\exp\left(\frac{k_i^2}{m-k_i}\right)\leq\exp\left(\frac{n^2d^2/4}{n(n-1-d)/2}\right)\leq\exp\left(\frac{n^2d^2/4}{n^2/4}\right)=e^{d^2}\]
(where some inequalities assume $n$ is sufficiently large).\\
It is worth pointing out that this bound is only relevant for $i=0$, as otherwise this is always $o(1)$.} and 
\[
\frac{(m-k_i+j)_{j}}{(w'+j)_{j}}\le \left(\frac{m}{Cq \binom{n}{2}-k_i}\right)^j \overset{k_i=O(k_0),\,q\geq4k_0/(Cn^2)}{\le}  e^{O(j)}C^{-j}q^{-j}
\]
to bound the left-hand side of~\eqref{eq:fraction_compare} from above by
\begin{equation}\label{eq:LHSb}
f_j e^{O(j)}C^{-j}q^{-j}.
\end{equation}

Finally we observe that, since $\cF$ is $q$-spread and $(\cF,\cH_1,\ldots,\cH_i)$ is a fragmentation process with $|\cH_\ell|\ge |\cH_{\ell-1}|/2$ for all $\ell\in [i]$, by \eqref{equa:fragmentationProperty}, for every $I\subseteq M$ we have
\[
|\cH_i\cap \langle I\rangle|\le|\cF\cap \langle I\rangle|\le q^{|I|}|\cF|\le 2^iq^{|I|}|\cH_i|.
\]
Consider now sets $I\subseteq M$ with $|I|=j$.
For $j\in[\delta e(F), k_i]$, since $\delta e(F)\ge \delta k_i$, we have that\COMMENT{Here we are using the fact that, if $I$ contains any element outside of $\cH_i$, then $|\cH_i\cap \langle I\rangle|=0$.
We then have that
\[
f_j\le \binom{k_i}{j} 2^iq^{|I|}\leq2^{k_i}2^iq^j\leq2^{\delta^{-1}j+i}q^j=e^{O_\delta(j)}q^j.
\]}
\begin{equation}\label{equa:fj1}
    f_j\le \binom{k_i}{j} 2^iq^{|I|}=e^{O_\delta(j)}q^j,
\end{equation}
where we have used the fact that $2^i=O(1)$. 
Now recall that $k_0=e(F)$. 
For $j\in[k, \min\{k_i,\delta k_0\}]$, we use the fact that $\cF$ is $(q,\alpha,\delta)$-superspread and, thus, by \eqref{equa:fragmentationProperty}, for each $I\subseteq M$ with $|I|=j$ and $c_I$ components we have
\[
|\cH_i\cap \langle I\rangle|\le|\cF\cap \langle I\rangle|\le q^{|I|} k_0^{-\alpha c_I}|\cF|\le 2^i k_0^{-\alpha c_I}q^{|I|}|\cH_i|.
\]
This yields
\begin{align}\label{equa:fj2}
f_j&=|\cH_i|^{-1} \left|\{J\in\cH_i:|J\cap S|=j\}\right|
\le  |\cH_i|^{-1} \sum_{I\subseteq S, |I|=j} \left|\cH_i \cap \langle I\rangle\right|
\le |\cH_i|^{-1} \sum_{I\subseteq S, |I|=j}  2^i k_0^{-\alpha c_I}q^{|I|}|\cH_i|\nonumber\\
&= O(q^j)\sum_{I\subseteq S, |I|=j}   k_0^{-\alpha c_I}
= O(q^j) \sum_{c=1}^j  k_0^{-\alpha c}\left|\left\{I\subseteq S: |I|=j, c_I=c\right\}\right|
{\overset{\text{\cref{lem:num_subgraphs}}}{\le}} e^{O(j)}q^j  \sum_{c=1}^j  k_0^{-\alpha c}\binom{k_i}{c}\nonumber\\
&\le e^{O(j)}q^j  \sum_{c=1}^j  \left(\frac{e\cdot k_i\cdot k_0^{-\alpha}}{c}\right)^c
=e^{O(j)}q^j  \sum_{c=1}^j  \left(\frac{e\cdot k}{c}\right)^c\le 
e^{O(j)}q^j  \sum_{c=1}^j  e^{k}\overset{j\ge k}{=}e^{O(j)}q^j.
\end{align}

Combining \eqref{equa:fj1} and \eqref{equa:fj2}, we simplify~\eqref{eq:LHSb} as follows:
\[
f_j e^{O(j)}C^{-j}q^{-j}\le  e^{O(j)}q^je^{O(j)}C^{-j}q^{-j}=e^{O(j)}C^{-j}.
\]
This confirms~\eqref{eq:fraction_compare} and, thus, concludes the proof of the claim.
\end{proof}

\end{document}